\newcommand{\D}{\mathbb{D}}
\newcommand{\RR}{\mathbb{R}}
\newcommand{\C}{\mathbb{C}}
\newcommand{\Z}{\mathbb{Z}}
\newcommand{\T}{\mathbb{T}}
\newcommand{\Cp}{C_{\varphi}}
\newcommand{\GH}{\mathscr{G}}
\newcommand{\Hi}{{\mathscr{H}}^\infty}
\newcommand{\Ho}{{\mathscr{H}}^1}
\newcommand{\Ht}{{\mathscr{H}}^2}
\newcommand{\Hp}{{\mathscr{H}}^p}
\newcommand{\Ha}{{\mathscr{H}}^{2^{\alpha+1}}}
\newcommand{\Da}{{\mathscr{D}}_\alpha}
\newcommand{\Real}{\operatorname{Re}}
\newcommand{\Imag}{\operatorname{Im}}
\newtheorem{theorem}{Theorem}[section]
\newtheorem{lemma}{Lemma}[section]
\begin{document}

\title[Approximation numbers of composition operators]{Approximation numbers of composition operators \\ on $H^p$ spaces of Dirichlet series}

%\date{April 24, 2014.}

\subjclass[2000]{32A05, 43A46}

\author[F. Bayart]{Fr\'ed\'eric Bayart}
\address{Clermont Universit\'e, Universit\'e Blaise Pascal, Laboratoire de Math\'ematiques, BP 10448, F-63000 CLERMONT-FERRAND -
CNRS, UMR 6620, Laboratoire de Math\'ematiques, F-63177 AUBIERE} \email{bayart@math.univ-bpclermont.fr}

\author[H. Queff\'{e}lec]{Herv\'{e} Queff\'{e}lec}
\address{Universit\'{e} Lille Nord de France, USTL, Laboratoire Paul Painlev\'{e} UMR. CNRS 8524, F--59 655 Villeneuve d'Ascq
Cedex, France}\email{herve.queffelec@univ-lille1.fr}
%P.~O.~Box 68 (Gustaf H\"{a}llstr\"{o}min Katu 2B), 00014 University
%of Helsinki, Finland} 
%\email{herve.queffelec@univ-lille1.fr}

%\thanks{The first author was supported by the Academy of Finland, projects
%no.\ 113826 and 118765. }

    %Information for second author
\author[K. Seip]{Kristian Seip}
\address{Department of Mathematical Sciences, Norwegian University of Science and Technology,
NO-7491 Trondheim, Norway} \email{seip@math.ntnu.no}
\thanks{The second author is supported by the Research Council of Norway grant 227768. This paper was initiated during the research program \emph{Operator Related Function Theory and Time-Frequency Analysis} at the Centre for Advanced Study at the Norwegian Academy of Science and Letters in Oslo during 2012--2013.}

\begin{abstract}
By a theorem of the first named author, $\varphi$ generates a bounded composition operator on the Hardy space $\Hp$of Dirichlet series
($1\le p<\infty$) only if  $\varphi(s)=c_0 s+\psi(s)$, where $c_0$ is a nonnegative integer and $\psi$ a Dirichlet series with the following mapping properties: $\psi$ maps the right half-plane into the half-plane $\Real s >1/2$ if $c_0=0$ and is either identically zero or maps the right half-plane into itself if $c_0$ is positive. 
It is shown that the $n$th approximation numbers of bounded composition operators on $\Hp$ are bounded below by a constant times $r^n$ for some $0<r<1$ when $c_0=0$ and bounded below by a constant times $n^{-A}$ for some $A>0$ when $c_0$ is positive. Both results are best possible. Estimates rely on a combination of soft tools from Banach space theory ($s$-numbers, type and cotype of Banach spaces, Weyl inequalities, and Schauder bases) and a certain interpolation method for $\Ht$, developed in an earlier paper,  using estimates of solutions of the $\overline{\partial}$ equation. A transference principle from $H^p$ of the unit disc is discussed, leading to explicit examples of compact composition operators on $\Ho$ with approximation numbers decaying at a variety of sub-exponential rates. Finally, a new Littlewood--Paley formula is established, yielding a sufficient condition for a composition operator on $\Hp$ to be compact.
%Further progress would require an extension of a certain local embedding for $\Hp$, known to hold when $p$ is an even integer, and %techniques to solve interpolation problems in $\Hp$.
\end{abstract}

%\date{\today}
\maketitle

\section{Introduction and statement of main results}
%%%%%%%%%%%%%%%
%%%%%%%%%%%%%%
%%%%%%%%%%%%%%%%%%%

In the recent work \cite{QS1}, we studied the rate of decay of the approximation numbers of compact composition operators on the Hilbert space $\Ht$, which consists of all ordinary Dirichlet series $f(s)=\sum_{n=1}^{\infty}b_n n^{-s}$ such that
\[ \| f\|^2_{\Ht} := \sum_{n=1}^\infty \vert b_n\vert^2 <\infty. \]
The general motivation for undertaking such a study is that the decay of the approximation numbers is a quantitative way of studying the compactness of a given operator, yielding more precise information than what for instance its membership in a Schatten class does.
The purpose of the present paper is to take the natural next step of making a similar investigation in the case when $\Ht$ is replaced by the Banach spaces $\Hp$ for $1\le p < \infty$; here we follow \cite{BAY} and define ${\Hp}$ as the completion of the set of Dirichlet polynomials $P(s)=\sum_{n=1}^N b_n n^{-s}$ with respect to the norm 
\[ \Vert P\Vert_{{\Hp}}=\left(\lim_{T\to \infty} \frac{1}{T}\int_{0}^T \vert P(it)\vert^{p}dt\right)^{1/p}. \]
We consider this a particularly interesting case because operator theory on these spaces so far is poorly understood and appears intractable by standard methods.  

Our starting point is the first named author's work on $\Hp$ and boundedness and compactness of the composition operators acting on these spaces \cite{BAY, BAYA}. A basic fact proved in \cite{BAY} is that $\Hp$ consists of functions analytic in the half-plane $\sigma:=\Real s>1/2$. This means that $C_{\varphi}f:= f\circ \varphi$ defines an analytic function whenever $f$ is in $\Hp$ and $\varphi$ maps this half-plane into itself. But more is clearly needed for $C_{\varphi}$ to map $\Hp$ into $\Hp$. In particular, we need to consider other half-planes as well and introduce therefore again the notation
\[\C_\theta:=\{s=\sigma+i t:\ \sigma>\theta\},\]
where $\theta$ can be any real number. Following the work of Gordon and Hedenmalm \cite{GORHED}, we say that an analytic function $\varphi$ 
on $\C_{1/2}$ belongs to the Gordon--Hedenmalm class $\GH$ if it can be represented as
%\begin{equation}\label{GH}
\[ \varphi(s)=c_{0}s+\sum_{n=1}^\infty c_n\, n^{-s}=:c_{0}s+\psi(s), \]
%\end{equation} 
where $c_0$ is a nonnegative integer and $\psi$ is a Dirichlet series that is uniformly convergent in each half-plane\ \  $\C_\varepsilon$ ($\varepsilon>0$) and is either identically $0$ or has the mapping properties $\psi(\C_0)\subset \C_0$ if $c_0\geq 1$ and $\psi(\C_0)\subset \C_{1/2}$ if $c_0=0$. This terminology  is justified by the result from \cite{GORHED} saying that $C_\varphi$ is bounded on $\Ht$ if and only if $\varphi$ belongs to $\GH$. (See \cite[Theorem~1.1]{QS1} for this particular formulation of the result.) The $\Hp$ version of the Gordon--Hedenmalm theorem reads as follows \cite{BAYA}. 

\begin{theorem}\label{FRED} Assume that $\varphi:\C_{1/2}\to \C_{1/2}$ is an analytic map and that $1\leq p<\infty$.  \begin{enumerate}
\item[(a)] If $C_\varphi$ is bounded on ${\Hp}$,  then $\varphi$ belongs to $\GH$. 

\item[(b)] $C_\varphi$ is a contraction on $\Hp$ if and only if $\varphi$ belongs to $\GH$ and $c_0\geq 1$. 

\item[(c)] If $\varphi$ belongs to $\GH$ and $c_0=0$, then $C_\varphi$ is bounded on ${\Hp}$ whenever $p$ is an even integer.

\end{enumerate}
\end{theorem}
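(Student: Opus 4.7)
My plan would be to handle the three parts in order of increasing difficulty, starting with (c), then (b), and finally (a).

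For part (c), I would use a power trick that reduces the case $p = 2k$ to the known Gordon--Hedenmalm result on $\Ht$. The decisive observation is that from the Besicovitch definition of the norm one has the isometric identity $\|f\|_{\Hk}^{2k} = \|f^k\|_{\Ht}^2$ for every Dirichlet polynomial $f$, because $|f(it)|^{2k} = |f^k(it)|^2$ and $f^k$ is again a Dirichlet polynomial. Since $C_\varphi$ commutes with pointwise powers, $C_\varphi(f^k) = (C_\varphi f)^k$, and so boundedness of $C_\varphi$ on $\Ht$ (which holds for every $\varphi \in \GH$ by \cite{GORHED}) gives $\|C_\varphi f\|_{\Hk} = \|C_\varphi(f^k)\|_{\Ht}^{1/k} \le \|C_\varphi\|_{\Ht \to \Ht}^{1/k} \|f\|_{\Hk}$. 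Density of Dirichlet polynomials in $\Hk$ then concludes the argument.

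For part (b), direction $(\Leftarrow)$: I would pass to the Bohr correspondence identifying $\Hp$ with $H^p(\Tn)$ in the limit $n \to \infty$. When $c_0 \ge 1$, the inner composition corresponds via the Bohr lift to a measure-preserving transformation of the infinite polytorus, so contractivity in $L^p(\mathbb{T}^\infty, dm_\infty)$ is automatic, and the analyticity/Dirichlet-series structure is preserved. For the direction $(\Rightarrow)$, contraction implies boundedness, so part (a) already yields $\varphi \in \GH$; what remains is to rule out $c_0 = 0$. Here I would test on $f(s) = 1 + \varepsilon n^{-s}$ for small $\varepsilon$, expand both $\|f\|_{\Hp}^p$ and $\|C_\varphi f\|_{\Hp}^p$ to second order in $\varepsilon$, and exploit the fact that $c_0 = 0$ forces $|n^{-\varphi(it)}| \le n^{-1/2}$ to derive the correct strict inequality at the level of the quadratic term, contradicting contractivity.

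Part (a) is where the real work lies. My strategy would be to test $C_\varphi$ on the characters $p^{-s}$ for each prime $p$, obtaining that each $p^{-\varphi}$ sits in $\Hp$ and is therefore limit of Dirichlet polynomials in the Besicovitch norm. The almost periodicity built into $\Hp$ would then be leveraged to recover Dirichlet-series structure: the vertical periods $\{ \tau : p^{-\varphi(s+i\tau)} = p^{-\varphi(s)} \}$ must be compatible across different primes, and this compatibility is exactly what forces $\varphi(s) - c_0 s$ to be a Dirichlet series with $c_0$ a nonnegative integer (the slope at infinity of $\Imag \varphi(it)$ as $t \to \infty$). The mapping properties of $\psi$ are then inherited from the condition $\varphi(\C_{1/2}) \subset \C_{1/2}$ combined with an asymptotic analysis of $\varphi$ near $\Real s = +\infty$. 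The hardest step is the rigorous extraction of $c_0$ and verification that $\psi$ converges uniformly on $\C_\varepsilon$: unlike the $p=2$ case, one cannot use Hilbert-space/reproducing-kernel arguments, and one must instead rely on the $H^p(\Tn)$ realization together with a careful use of Bohr's theorem on the interchange of vertical and polytorus limits, following the approach of \cite{BAYA}.
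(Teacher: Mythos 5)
The paper does not prove Theorem~\ref{FRED}; it is stated as a citation to \cite{BAYA}, so there is no ``paper's own proof'' to compare against. Taking your sketch on its own merits: part (c) is correct and clean. The power identity $\|f\|_{\Hk}^{2k}=\|f^k\|_{\Ht}^{2}$ for Dirichlet polynomials, combined with $C_\varphi(f^k)=(C_\varphi f)^k$ and the Gordon--Hedenmalm bound on $\Ht$, gives $\|C_\varphi f\|_{\Hk}\le \|C_\varphi\|_{\Ht\to\Ht}^{1/k}\|f\|_{\Hk}$, and the density-and-limits bookkeeping goes through because $\Hk\hookrightarrow\Ht$. For the $(\Rightarrow)$ direction of (b), your test $f=1+\varepsilon\,n^{-s}$ works, but the decisive term is already at \emph{first} order in $\varepsilon$: the Bohr lift of $n^{-s}$ has zero mean while $n^{-\varphi}$ has nonzero constant coefficient $n^{-\varphi(+\infty)}$, so for a suitable phase of $\varepsilon$ the image norm exceeds $1+c|\varepsilon|$ while the source norm is $1+O(\varepsilon^2)$; there is no need to go to second order.

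The weak points are the $(\Leftarrow)$ direction of (b) and all of (a). Your claim that, for $c_0\ge 1$, ``the inner composition corresponds via the Bohr lift to a measure-preserving transformation of the infinite polytorus'' is accurate only for $\psi\equiv 0$ (where $z\mapsto z^{c_0}$ on $\T^\infty$ is indeed Haar-preserving); once $\psi\ne 0$ the lifted map no longer preserves $m_\infty$, and contractivity requires a genuine subordination/submean-value argument on vertical limit functions (this is where the hypothesis $\psi(\C_0)\subset\overline{\C_0}$ enters), not mere measure invariance. As written, that step would fail. For part (a), testing on the characters $p^{-s}$ and invoking almost periodicity is the right opening move, but the passage from ``$p^{-\varphi}\in\Hp$ for every prime $p$'' to ``$\varphi(s)=c_0 s+\psi(s)$ with $c_0\in\Z_{\ge 0}$ and $\psi$ a uniformly convergent Dirichlet series on each $\C_\varepsilon$'' is where the entire technical content lives; your sketch acknowledges this but ultimately defers to \cite{BAYA} rather than supplying the argument, so it cannot stand as a proof.
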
 

The curious fact that part (c) of this theorem only covers the case when $p$ is an even integer can be directly attributed to an interesting feature of $\Hp$. To see this, we recall that  $\Hp$ can be identified  isometrically with $H^{p}(\T^\infty)$, which is the $H^p$ space of the infinite-dimensional polydisc $\T^\infty$, via the so-called Bohr lift. This space is a subspace of the Lebesgue space $L^{p}(\T^\infty)$ with respect to normalized Haar measure on $\T^{\infty}$. The main difference with classical $H^p$ spaces is that, even for $1<p<\infty$, $p\neq 2$, ${\Hp} =H^{p}(\T^\infty)$ is not complemented in $L^{p}(\T^\infty)$ \cite{EBE}. As a consequence, there seems to be little hope to obtain a useful description of the dual space of $\Hp$. This is a serious obstacle and makes it hard to employ familiar techniques such as interpolation in the Riesz--Thorin or Lions--Peetre sense. We refer to \cite{SS, OLS} for further details about the anomaly of $\Hp$. There are additional obstacles as well, since familiar Hilbert space techniques such as orthogonal projections, frames and Riesz sequences, %multiplicative Weyl inequalities,
 and equality of various $s$-numbers (like approximation, Bernstein, Gelfand  numbers) are no longer available.

 %Yet, a key point seems to us the following: the notion of Riesz sequence can be replaced by that of "unconditional sequence". And the latter %notion can be efficiently tested via the notion of interpolating sequence, which therefore establishes a bridge between operator theory and %function theory.

We have found ways to circumvent these difficulties to obtain results that, at least partially, parallel those from \cite{QS1}. To state our first result, we recall that the $n$th approximation number $a_n(T)$ of a bounded operator on a Banach space $X$ is the distance in the operator norm from $T$ to operators of rank $<n$. One of our main theorems is a direct analogue and indeed an improvement of \cite[Theorem 1.1]{QS1}, showing again the crucial dependence on the parameter $c_0$: 

 \begin{theorem}\label{General} Assume  that $c_0$ is a nonnegative integer, $p\ge 1$, and that $\varphi(s)=c_0 s+\sum_{n=1}^\infty c_n n^{-s}$ is a nonconstant function that generates a bounded composition operator $C_{\varphi}$ on ${\Hp}$. 
\begin{itemize}
\item[(a)] If $c_0=0$, then $a_n(C_{\varphi})\gg \delta^n$ for some\ $0<\delta<1$.
\item [(b)]If $c_0\geq1$, then  $a_n(C_{\varphi})\geq \delta_p (n\,\log n)^{-\Real c_1}$, where $\delta_{p}>0$ only depends on $p$.
 In particular,\  if  $\ \Real c_1>0$, then 
\begin{equation}\label{bill}\sum_{n=1}^\infty [a_{n}(C_\varphi)]^{1/\Real c_1}=\infty.\end{equation} 

\end{itemize}
These lower bounds  are optimal.
\end{theorem}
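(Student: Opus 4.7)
The plan is to reduce each of the two parts to producing sufficiently many eigenvalues (or approximate eigenvalues) of $C_\varphi$ with controlled modulus, and then to convert spectral data into approximation-number lower bounds via either Bernstein-type restriction arguments or the multiplicative Weyl-type inequality
\[\prod_{k=1}^{N}|\lambda_k(T)|\ \le\ c^{N}\prod_{k=1}^{N}a_k(T),\]
which remains valid for any bounded operator $T$ on a Banach space.

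For part (b), when $c_0\ge 1$, the decisive observation is that for the pure affine symbol $\varphi(s)=s+c_1$ every character $p^{-s}$ (with $p$ prime) is an eigenfunction of $C_\varphi$ with eigenvalue $p^{-c_1}$, as one sees by direct substitution. For a general $\varphi=c_0 s+c_1+\psi_2$, a perturbative argument produces genuine eigenvectors on $\Hp$ of the form $p^{-s}+(\textrm{higher order Dirichlet terms})$ for primes $p$ lying outside a thin exceptional set determined by $\psi_2$. Under the Bohr lift these eigenvectors are supported on the linear monomials on $\mathbb{T}^\infty$, which form an unconditional system in $H^p(\mathbb{T}^\infty)$ with constant depending only on $p$ (via Khintchine's inequality applied to independent characters). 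The Bernstein number therefore obeys
\[b_N(C_\varphi)\ \gtrsim_p\ |\lambda_N|\ \asymp\ p_N^{-\Real c_1},\]
and since $p_N\asymp N\log N$ by the prime number theorem and $a_N\ge b_N$ in any Banach space, one recovers $a_N(C_\varphi)\ge\delta_p(N\log N)^{-\Real c_1}$.

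For part (a), when $c_0=0$, $\varphi$ sends $\C_0$ into $\C_{1/2}$ and hence $\C_{1/2}$ into itself, so by the half-plane Denjoy--Wolff theorem it admits a fixed point $\alpha\in\C_{1/2}$ with $|\varphi'(\alpha)|<1$ (the boundary case being handled by a standard approximation). The Koenigs intertwiner $\sigma$ determined by $\sigma\circ\varphi=\varphi'(\alpha)\sigma$ provides eigenvectors $\sigma^k\in\Hp$ with eigenvalues $\varphi'(\alpha)^k$, the $\Hp$-membership being verified through the Littlewood--Paley formula developed later in the paper. Applied to these geometric eigenvalues, the Weyl inequality together with the monotonicity of $(a_k)$ and the bound $a_k\le\|C_\varphi\|$ yields
\[a_N(C_\varphi)^N\ \ge\ a_{N+1}\cdots a_{2N}\ \ge\ \frac{c^{2N}}{\|C_\varphi\|^N}\,|\varphi'(\alpha)|^{N(2N+1)},\]
whence $a_N(C_\varphi)\gg\delta^N$ with $\delta=|\varphi'(\alpha)|^2\in(0,1)$.

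The principal obstacle is the perturbative construction of true eigenvectors in part (b) when $\psi_2\not\equiv 0$ and $p\ne 2$: the absence of bounded projections onto the relevant invariant subspaces (since $\Hp$ is not complemented in $L^p(\mathbb{T}^\infty)$) obstructs direct diagonalization, and one must rely on type/cotype arguments combined with the interpolation method of \cite{QS1} using $\overline\partial$-estimates to convert approximate eigenvectors into genuine ones in $\Hp$. Optimality of both rates is then verified by explicit examples: perturbations of affine symbols realizing any $\delta\in(0,1)$ for part (a), and a carefully designed $\varphi$ with $c_0=1$, $\Real c_1>0$ and nontrivial $\psi_2$ for part (b), for which the $(n\log n)^{-\Real c_1}$ rate is actually attained.
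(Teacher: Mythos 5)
Your part (a) argument is essentially the paper's first (spectral) proof: Koenigs eigenvalues $[\varphi'(\alpha)]^k$ fed into a multiplicative Weyl inequality. The paper uses Pietsch's form $|\lambda_{2n}(T)|\le e\big(\prod_{j=1}^{n}a_j(T)\big)^{1/n}$ rather than the product inequality you state, and a small point: the $\Hp$-membership of the Koenigs eigenfunctions does not need the Littlewood--Paley formula---the paper just notes that $\delta_\alpha,\delta_\alpha',\dots$ span a $C_\varphi^*$-invariant subspace on which the matrix is triangular with diagonal $[\varphi'(\alpha)]^{k}$, placing these values in the spectrum without producing eigenfunctions in $\Hp$ at all. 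These are minor; your route for (a) is sound.

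Part (b) has a genuine gap. Your perturbative eigenvector construction cannot work when $c_0\ge 2$: for a compact $C_\varphi$ on $\Hp$ with $c_0\ge 2$ the spectrum is $\{0,1\}$ (as the paper notes at the end of Section~3.4), so there are no eigenvalues of size $\asymp p^{-\Real c_1}$ to be produced. Even for $c_0=1$ the argument is internally inconsistent: you posit eigenvectors of the form $p^{-s}+(\text{higher order Dirichlet terms})$ and then assert that under the Bohr lift they are ``supported on the linear monomials'' so Khintchine applies---but the higher-order terms precisely destroy that support, so the unconditionality you invoke to get $b_N(C_\varphi)\gtrsim|\lambda_N|$ has no justification. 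The paper avoids eigenvectors altogether. It fixes the $n$-dimensional subspace $E=\operatorname{span}(p_1^{-s},\dots,p_n^{-s})$---whose elements are \emph{not} eigenvectors for general $\varphi$---and uses the contractive projection $Q$ (averaging over the rotation $(z_j)\mapsto(e^{i\theta}z_j)$) onto the homogeneous part of degree $c_0$; this collapses $C_\varphi f$ to the explicit vector $\sum_k b_k p_k^{-c_1}(p_k^{c_0})^{-s}$, and \emph{then} Khintchine (twice, via the Bohr lift and rotation invariance of $m_\infty$) gives $\|C_\varphi f\|_{\Hp}\gg p_n^{-\Real c_1}\|f\|_{\Hp}$, hence $b_n(C_\varphi)\gg(n\log n)^{-\Real c_1}$ by the prime number theorem. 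The key idea you are missing is that contractivity of a suitable Bohr-side projection substitutes for the bounded projections onto invariant subspaces that $\Hp$ lacks, sidestepping both the nonexistence of eigenvectors when $c_0\ge 2$ and the duality problems when $p\ne 2$. (For $c_0=1$ the paper also gives a spectral proof of the pointwise bound via Pietsch's multiplicative inequality with $N\sim n\log n$, which is closer to what you attempt but uses eigenvalues, not eigenvectors.)
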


As in \cite{QS1}, the notation $f(n)\ll g(n)$ or equivalently $g(n)\gg f(n)$ means that there is a constant $C$ such that $f(n)\le C g(n)$ for all $n$ in question.

The proof of Theorem~\ref{General} follows the same pattern as the proof of \cite[Theorem 1.1]{QS1}, with an additional ingredient allowing a sharper estimate (new even for $p=2$) in the case $c_0\geq 2$. The general strategy is based on fairly soft functional analysis, which turns out to remain valid in the context of our spaces $\Hp$. It leads to certain general estimates for approximation numbers of $C_\varphi$, which are made effective thanks to a hard technical device involving $\overline{\partial}$ correction arguments. It is perhaps surprising to find such methodology in this context, but it proved to be quite efficient in the case $p=2$ \cite{QS1}. Unexpectedly, this auxiliary Hilbert space result can be used again in the present setting without any essential changes.

%Theorem~\ref{General} requires $C_{\varphi}$ to be compact, and its applicability rests therefore on our ability to identify when $C_\varphi$ is %compact on $\Hp$. 
Contrary to what happens for composition operators on $H^p(\mathbb D)$, continuity or compactness of composition operators on $\mathcal H^p$ can not be immediately inferred from what is known when  $p=2$. Indeed, no nontrivial
sufficient condition for a composition operator to be compact on $\Hp$, $p\neq 2$, has been found in previous studies. We have therefore chosen to include in the present paper a sufficient condition when $c_0\geq 1$, similar to the condition given in \cite{BAYA} for composition operators on $\Ht$. As in that paper, our condition will depend on a Littlewood--Paley formula, but the proof of \cite{BAYA} can not be easily adapted since it depends crucially on the Hilbert space structure of $\mathcal H^2$. To state our condition, we need to recall that
 the Nevanlinna counting function of a function $\varphi$ in $\mathcal G$ is defined by
$$\mathcal N_ \varphi(s)=\left\{
\begin{array}{ll}
\sum_{w\in\varphi^{-1}(s)}\Real w&\textrm{if }s\in\varphi(\mathbb C_0)\\
0&\textrm{otherwise}.
\end{array}\right.
$$

\begin{theorem}\label{sufficientcompact} Assume  that $\varphi(s)=c_0  s+\psi(s)$ is in $\mathcal G$ with $c_0\geq 1$ and that
\begin{itemize}
\item[(a)] $\Imag \psi$ is bounded on $\C_0$;
\item[(b)] $\mathcal N_\varphi(s)=o\big(\Real s\big)$ when $\Real s\to 0^+$.
\end{itemize}
Then $C_\varphi$ is compact on $\Hp$ for $p\ge 1$.
\end{theorem}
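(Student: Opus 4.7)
My plan for proving Theorem~\ref{sufficientcompact} has three steps. Step 1 is to establish a new Littlewood-Paley upper bound for the $\Hp$ norm, valid for every Dirichlet polynomial. Step 2 combines this inequality with the change of variables $w=\varphi(s)$ and a Littlewood-type non-univalent counting argument, producing an upper bound for $\|C_\varphi f\|_{\Hp}^p$ weighted by $\mathcal N_\varphi$. Step 3 inserts the decay hypothesis (b) into the normal-family criterion for compactness.

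For Step 1, the target inequality is of the form
\[
\|f\|_{\Hp}^p \ll |f(+\infty)|^p + \lim_{T\to\infty}\frac{1}{T}\int_{-T}^T\!\int_0^\infty |f'(1/2+\sigma+it)|^2\bigl(1+|f(1/2+\sigma+it)|^{p-2}\bigr)\sigma\, d\sigma\, dt,
\]
which for $p=2$ reduces to the identity of \cite{BAYA}. Because $\Hp \simeq H^p(\Tn)$ is not complemented in $L^p(\Tn)$, neither duality nor a Riesz projection is at hand, and the Hilbert-space argument breaks down. My plan is to work finite-dimensionally: via the Bohr lift a Dirichlet polynomial becomes a polynomial on a finite-dimensional polydisc $\mathbb D^d$; apply the classical Littlewood-Paley inequality on $H^p(\mathbb D^d)$ with constants independent of $d$ using area/$g$-function machinery; then transfer the inequality back to the half-plane by taking the vertical mean.

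For Step 2, setting $F = C_\varphi f$ and $F'=\varphi'(f'\circ\varphi)$, substituting into the formula and changing variables $w=\varphi(s)$, the non-univalent counting argument should yield
\[
\|C_\varphi f\|_{\Hp}^p \ll |f(\varphi(+\infty))|^p + \lim_{T\to\infty}\frac{1}{T}\int_{-T}^T\!\int_0^\infty |f'(1/2+\mu+iv)|^2\bigl(1+|f|^{p-2}\bigr)\mathcal N_\varphi(1/2+\mu+iv)\, d\mu\, dv.
\]
Hypothesis (a), that $\Imag\psi$ is bounded on $\C_0$, is essential here: it confines $\varphi(\C_0)$ to a vertical strip of bounded width modulo the linear term $c_0 s$, making the vertical mean well-defined and the Littlewood counting tractable. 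For Step 3, let $(f_n)$ be a bounded sequence in $\Hp$ with $f_n\to 0$ uniformly on compacta of $\C_{1/2}$. Fix $\varepsilon > 0$ and use hypothesis (b) to choose $\sigma_0 > 0$ such that $\mathcal N_\varphi(s) < \varepsilon\,\Real s$ whenever $\Real s < \sigma_0$. Split the integral from Step 2 at $\mu=\sigma_0$: on $\{\mu\geq\sigma_0\}$ uniform convergence of $f_n$ and $f_n'$ handles the integrand, while on $\{\mu<\sigma_0\}$ the bound on $\mathcal N_\varphi$ dominates by $\varepsilon$ times Step 1 applied to the identity, which is $\ll \|f_n\|_{\Hp}^p = O(1)$. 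Letting $\varepsilon\to 0$ yields $\|C_\varphi f_n\|_{\Hp}\to 0$.

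The principal obstacle is Step 1. The $p=2$ proof is Parseval on vertical lines, and the standard $H^p(\mathbb D)$ proof uses duality with $H^q$ via a bounded projection; neither survives in infinitely many variables when $p\neq 2$. The technical heart is therefore to produce a Littlewood-Paley inequality on $H^p(\mathbb D^d)$ with constants that do not blow up with $d$, and to verify that the vertical mean operation is compatible with the $|f|^{p-2}$ weight, particularly in the sub-quadratic range $1\leq p <2$, where the weight is singular and the Fefferman-Stein duality typically used is more delicate.
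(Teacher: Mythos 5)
Your Steps 2--3 follow the same general scheme as the paper (non-univalent change of variables bringing in $\mathcal N_\varphi$, hypothesis (a) to confine the imaginary part, splitting at a cut-off furnished by hypothesis (b)), but your Step 1 --- the crux --- is both unproved and aimed at a harder target than necessary. You propose to prove a Littlewood--Paley inequality on $H^p(\D^d)$ with constants independent of $d$ and then transfer it by taking vertical means; you yourself flag the dimension-free constants as the principal obstacle, and indeed iterating the one-variable square-function argument gives constants growing with $d$, so as stated this is a gap, not a plan. The paper avoids the issue entirely: for each fixed character $\chi\in\T^\infty$ the vertical limit $f_\chi\circ\omega_\xi$ (with $\omega_\xi(z)=\xi\frac{1+z}{1-z}$ a Cayley map) is a \emph{one-variable} $H^p(\D)$ function, so the classical Yamashita formula applies with an absolute constant; integrating in $\chi$ against $m_\infty$ via Lemma~\ref{LEMCALCULNORME} and letting $\xi\to\infty$ yields the two-sided equivalence of Theorem~\ref{PROPLITTLEWOODPALEY}, with the weight exactly $\sigma|f_\chi(s)|^{p-2}|f_\chi'(s)|^2$ and no polydisc theory, duality, or complementation anywhere. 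Note also that your formulation through the vertical mean of $f$ itself, rather than through $f_\chi$ and $m_\infty$, is not what you need later: the composed function satisfies $(f\circ\varphi)_\chi=f_{\chi^{c_0}}\circ\varphi_\chi$, and the change of variables must be performed for each $\varphi_\chi$, with hypothesis (b) transferred to all $\mathcal N_{\varphi_\chi}$ (this transfer is a cited fact in the paper, not automatic).

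Two further points would sink the argument even granting Step 1 as you state it. First, your displayed inequality is only the upper bound $\|f\|_p^p\ll\cdots$, yet in Step 3 you invoke the \emph{reverse} bound (``Step 1 applied to the identity, which is $\ll\|f_n\|_p^p$'') to control the near-boundary piece; moreover, with your modified weight $1+|f|^{p-2}$ that reverse inequality is false for $1\le p<2$, since the ``$1$'' term alone already produces $\sum_{n\ge2}|b_n|^2$, and $\|f\|_{\Ht}^2\ll\|f\|_{\Hp}^p$ fails. The paper needs, and proves, the genuine two-sided equivalence with weight $|f_\chi|^{p-2}|f_\chi'|^2$. Second, your treatment of the region $\Real s\ge\sigma_0$ by ``uniform convergence of $f_n$ and $f_n'$ on compacta'' is too quick: that region is unbounded, and for $p<2$ the factor $|f_n|^{p-2}$ is singular at zeros of $f_n$, so pointwise convergence does not give convergence of the integrals. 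The paper instead bounds $\mathcal N_{\varphi_\chi}(s)\le\Real s/c_0$ there (Littlewood's inequality), shifts by $\theta/2$, applies the Littlewood--Paley equivalence once more to get $\ll\|f_n(\cdot+\theta/2)\|_p^p$, and concludes because the horizontal translation is compact on $\Hp$ and $(f_n)$ is weakly null; you would need this (or an equivalent uniform-integrability argument), and you should also justify your normal-family compactness criterion for $\Hp$, including why $f_n(+\infty)\to0$.
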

Note that in the special case when $\varphi$ is univalent, we have either $\mathcal N_\varphi(s)=\Real \varphi^{-1}(s)$ or $\mathcal N_\varphi(s)=0$. This means that assumption (b) can be rephrased as saying that $\Real \psi(s)/ \Real s \to +\infty$ when $\Real s \to 0^+$.

In the final part of the paper, we will discuss a transference principle that was established in \cite{QS1}, showing that symbols of composition operators on $H^{p}(\D)$, via left and right composition with two fixed analytic maps, give rise to composition operators on $\Hp$. The idea is to use this principle to transfer estimates for approximation numbers from $H^p(\D)$. This leads to satisfactory results when $p=1$, but, surprisingly, for no other values of $p\neq 2$.  As an example, we mention that it allows us to prove the following result.  

 \begin{theorem}\label{example} There exists a function $\varphi(s)=\sum_{n=1}^\infty c_n n^{-s}$ in $\GH$ such that $C_\varphi$ is bounded on $\Ho$ with approximation numbers verifying 
$$a\,e^{-b\sqrt n}\leq a_{n}(C_\varphi)\leq a'\,e^{-b'\sqrt n}.$$
\end{theorem}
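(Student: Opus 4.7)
The plan is to use the transference principle of \cite{QS1} mentioned in the final part of the introduction, which lifts composition operators from $H^p(\D)$ to $\Hp$ via two fixed auxiliary analytic maps. The strategy is to start from a disc symbol $\phi$ for which a sharp two-sided estimate $a_n(C_\phi) \asymp e^{-c\sqrt n}$ on $H^1(\D)$ is already known, and then verify that both sides of the inequality survive the transfer to $\Ho$.

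First, I would select $\phi \colon \D \to \D$ to be a univalent lens-type map at a boundary point of $\D$. For such symbols the approximation numbers of $C_\phi$ on $H^1(\D)$ admit matching upper and lower bounds of the form $a\,e^{-b\sqrt n} \leq a_n(C_\phi) \leq a'\,e^{-b'\sqrt n}$; such estimates are available in the work of Li, Queff\'elec and Rodr\'iguez-Piazza on approximation numbers of composition operators on $H^p$ of the disc.

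Second, I would apply the transference principle of \cite{QS1}. Using the auxiliary map $s \mapsto 2^{-s}$ (together with a chosen right inverse on an appropriate domain), one produces a symbol $\varphi \in \GH$, necessarily with $c_0 = 0$ (the only case compatible with the prescribed rate of decay, by Theorem~\ref{General}), together with bounded operators $T \colon \Ho \to H^1(\D)$ and $S \colon H^1(\D) \to \Ho$ such that
\[
C_\varphi \, = \, S\, C_\phi\, T .
\]
Submultiplicativity of the approximation numbers then gives the upper bound $a_n(C_\varphi) \leq \|S\|\,\|T\|\,a_n(C_\phi) \leq a'\,e^{-b'\sqrt n}$.

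The main obstacle is the matching lower bound. For this I would establish a reverse factorisation $C_\phi = S'\,C_\varphi\,T'$ with bounded operators $S', T'$, so that $a_n(C_\phi) \ll a_n(C_\varphi)$ yields $a_n(C_\varphi) \gg e^{-b\sqrt n}$. Producing this reverse factorisation requires a bounded retraction linking $\Ho$ with a copy of $H^1(\D)$, a device that is not available for general non-Hilbert $p$ because of the failure of complementation of $\Hp$ in $L^p(\Tn)$ noted in the introduction. For $p = 1$, however, one can still carry it out by a dedicated argument exploiting the extremal role of $L^1$ in the scale $L^p(\Tn)$; this step is the technical heart of the proof, and is precisely what explains why the transference method yields sharp two-sided estimates only for $p=1$ (and trivially $p=2$) among the spaces $\Hp$.
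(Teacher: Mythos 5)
Your upper bound is exactly the paper's: factor $C_\varphi = C_I\,C_\omega\,C_T$ with $\omega$ a lens map, cite the estimate $a_n(C_\omega)\asymp e^{-c\sqrt n}$ on $H^1(\D)$ from Li--Queff\'elec--Rodr\'iguez-Piazza, and invoke the ideal property (Theorem~\ref{tsfc}). No issues there.

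The lower bound is where the proposal breaks down. You posit a reverse factorisation $C_\phi = S'\,C_\varphi\,T'$ with bounded $S':\Ho\to H^1(\D)$ and $T':H^1(\D)\to\Ho$, but you never construct these operators, and in fact such a factorisation does not exist in any obvious way. Unwinding what would be needed, $T'$ would have to be a bounded right inverse of $C_T:\Ho\to H^1(\D)$, i.e.\ $C_T\circ T'=\mathrm{id}$ on $H^1(\D)$; this forces $T'g = g\circ T^{-1}$, and for generic $g\in H^1(\D)$ the function $g\circ T^{-1}$ on $\C_{1/2}$ is not a Dirichlet series, let alone one in $\Ho$. The appeal to a ``dedicated argument exploiting the extremal role of $L^1$'' is thus not a step in a proof but a placeholder for one, and it also misidentifies what actually makes $p=1$ tractable here. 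The paper's lower bound goes through Theorem~\ref{belowc4} (itself a transferred form of Theorem~\ref{belowc2bis}): one lower-bounds the Bernstein numbers of $C_\varphi^*$ on the span of evaluations at the points $z_j=1-\rho^j$, controlling three quantities — the Carleson norm $\|\mu_Z\|_{\mathcal C,H^1(\D)}$, the interpolation constant $M_{\Ho}(\Phi(Z))$, and the kernel ratio $\inf_j (1-|z_j|^2)/(1-|\omega(z_j)|^2)$ — and then optimises $\rho = 1-1/\sqrt n$. The reason $p=1$ is special is not a complementation or retraction phenomenon in $L^1$, but the elementary squaring trick \eqref{remarkable}, $M_{\Ho}(S)\le[M_{\Ht}(S)]^2$ (solve $g(s_j)=\sqrt{a_j}$ in $\Ht$ and set $f=g^2$), which, combined with the $\bar\partial$-based estimate of Lemma~\ref{secondlem2}, yields $M_{\Ho}(\Phi(Z))\ll e^{b/(1-\rho)}$. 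Without a substitute for this interpolation estimate, your proposed route has no way to produce the matching lower bound.
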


This theorem follows, via our transference principle, from estimates from \cite{ROQULI} for composition operators on $H^p(\T)$ associated with lens maps on $\D$. More general statements about the range of possible decay rates for approximation numbers of $C_\varphi$ on $\Ho$ can be worked out, using our transference principle and the methods of our recent work \cite{QS2}. However, at present, we are not able to reach the same level of precision for approximation numbers of slow decay. This is related to a certain local embedding inequality, known to be valid only when $p$ is an even integer; this is also the reason for the constraint in part (c) of Theorem \ref{FRED}. As to the success of our method when $p=1$, the point is that we are able to resort to estimates for interpolating sequences for $\Ht$, in contrast to what can be done for general $p>1$. Indeed, essentially nothing is known about the interpolating sequences for $\Hp$ when $1<p<\infty, p\neq 2$.

%Our main interest has been to identify novel techniques that apply in the present setting. We have therefore chosen not to explore the %relevance of the complex ``dimension'' of polynomial symbols $\varphi$, which was studied in depth in \cite{QS1}, simply noting that our %methods should apply in the $\Hp$ setting as well. Two fundamental  interesting question of a rather fundamental nature is whether one could %dispense with the condition that $p$ be an even integer in part (c) of Bayart's theorem. This problem is essentially equivalent to whether we %have a certain embedding of $\Hp$ into $H^p(\C_{1/2})$ (see Section~\ref{transfer} and \cite{SS} for details). We regard this as the most %important open problem in the theory of the spaces $\Hp$. 

Our study requires a fair amount of background material and function and operator theoretic results pertaining to $\Hp$. More specifically, the following list shows what will be covered in the remaining part of the paper: \begin{itemize}
\item Section 2 is devoted to some preliminaries and definitions on operators and Banach spaces.
\item In Section 3, we recall some general properties of $\Hp$: Schauder basis, Bohr lift, etc\ldots
\item In Section 4,  we have collected a number of basic functional analytic results as well as more specific results about Carleson measures and interpolating sequences, including estimates relying on our previous work \cite{QS1}.
\item Section 5 is devoted to a new Littlewood--Paley type formula for $\Hp, p\neq 2$, which is subsequently used to prove Theorem~\ref{sufficientcompact}. 
\item Section 6 establishes general lower bounds for $a_{n}(C_\varphi)$ using norms of Carleson measures and constants of interpolation.
\item Section 7 gives several proofs of Theorem \ref{General}.
\item Section 8 shows the optimality of the previous bounds.
\item The final Section 9 is devoted to our transference principle and to the proof of Theorem \ref{example}.
We also discuss two basic problems (the local embedding inequality and interpolating sequences for $\Hp$) that hinder further progress in our particular context as well as in our general understanding of $\Hp$.
\end{itemize}
 %The next section contains some background material; here we have collected a number of basic functional analytic results as well as more specific results about Carleson measures and interpolating sequences, including estimates relying on our previous work \cite{QS1}. In Section~\ref{general}, we state and prove our general functional analytic estimate involving norms of Carleson measures and constants of interpolation. This general result is transformed into a more concrete estimate in Section~\ref{proofbounds} which gives the proof of the bounds in Theorem~\ref{FRED}. The optimality of these bounds is established in Section~\ref{optimality}, phrased in terms of a separate theorem. Finally, we discuss our transference principle in Section~\ref{transfer}, along with the two basic problems (the local embedding inequality and interpolating sequences for $\Hp$) that hinder further progress in our particular context as well as in our general understanding of $\Hp$.  We insist as much on methods as on statements, and sometimes give several proofs of the same  result.

Throughout the paper, we insist on exhibiting methods, and sometimes we give several proofs of the same result.

 \section{Preliminaries}
%%%%%%%%%%%%%%%%%%%%
%%%%%%%%%%%%%%%%%
\subsection{$s$-numbers}\label{sn}
%%%%%%
 Let $T:X\to X$ be a bounded operator from a Banach space into itself, and let $(\lambda_{n}(T))_{n\geq 1}$ be the sequence of its eigenvalues, arranged in descending order.  We attach to this operator three sequences %$(s_n)_{n\geq 1}$ 
of so-called $s$-numbers, which dominate in a vague sense the sequence   $(\lambda_{n}(T))_{n\geq 1}$ and whose decay is designed to evaluate the degree of compactness of $T$ in a quantitative way:
\begin{itemize}
\item[(a)] Approximation numbers 
$$a_{n}(T)=\inf\{\Vert T-R\Vert : \hbox{\ rank}\ R<n\}$$
\item[(b)] Bernstein numbers 
$$b_{n}(T)=\sup_{\dim E=n}\Big[\inf_{x\in S_E}\Vert Tx\Vert\Big]$$
\item[(c)] Gelfand numbers 
$$c_{n}(T)=\inf\{\Vert T|_E\Vert : \hbox{\ codim}\ E<n\}.$$
\end{itemize}
 Note that the first and third sequences are defined as min--max and the  second  as a max--min.  We have
%\begin{equation}\label{david}
 \[a_{n}(T)=b_{n}(T)=c_{n}(T)=a_{n}(T^{\ast})=\lambda_{n}(|T|)\]
 %\end{equation}
if $X$ is a Hilbert space and $|T|$ denotes $\sqrt{T^{\ast}T}$ and 
\begin{equation}\label{stefan} a_{n}(T)\geq \max (b_{n}(T),c_{n}(T)) \quad \text{and}\quad a_{n}(T)\geq a_{n}(T^{\ast})\end{equation}
if $X$ is a Banach space, where the latter inequality is an equality when $T$ is compact. Moreover, 
\begin{equation}\label{stefane} a_{n}(T)\leq 2\sqrt{n}\, c_{n}(T) ,\end{equation}
which follows from the fact that any $n$-dimensional subspace $E$ of a Banach space $X$ is complemented in $X$ through a projection of norm at most $\sqrt n$ \cite[p. 114]{PIE}.

Finally, we will combine a basic property of composition operators with a matching property of approximation numbers:
\begin{itemize}
\item[(i)] (Non abelian semi-group property)

$$C_{\varphi_1\circ \varphi_2}=C_{\varphi_2}\circ C_{\varphi_1}$$
\item[ (ii)] (Ideal property)
$$a_{n}(ATB)\leq \Vert A\Vert a_{n}(T)\Vert B\Vert.$$
\end{itemize}
For detailed information on $s$-numbers, we refer to the articles  \cite{PIE,PIET} or to the books  \cite{CAST, PIETS}. 

\subsection{The Weyl inequalities of Johnson--K\"{o}nig--Maurey--Retherford and Pietsch}
We borrow  the following theorem from the famous paper \cite{JKMR}, which extended for the first time, under an additive form, the Weyl inequalities for Hilbert spaces to a Banach space  setting. 
 %Interesting related papers are (\cite{PIET}) and (\cite{KON}). 
\begin{theorem} [Johnson--K\"onig--Maurey--Retherford] \label{four}Let $T:X\to X$ be a bounded and power-compact linear operator from a Banach space $X$ to itself and $\ 0<r<\infty$. Then
\begin{equation}\label{bernard} \Vert (\lambda_{j}(T))\Vert_{\ell^r}\leq c_{r} \Vert (a_{j}(T))\Vert_{\ell^r}.\end{equation}
\end{theorem}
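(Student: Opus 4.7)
The plan is to split the argument into a multiplicative eigenvalue estimate of the form
\[\prod_{j=1}^n |\lambda_j(T)| \le C^n \prod_{j=1}^n a_j(T)\]
with a constant $C$ independent of $n$, followed by an algebraic conversion to the additive $\ell^r$ inequality. For the multiplicative step, I would exploit Riesz spectral theory: since some power of $T$ is compact, there exists for each $n$ a $T$-invariant subspace $E_n \subset X$ of dimension $n$ on which the eigenvalues of $T|_{E_n}$ are exactly $\lambda_1(T),\ldots,\lambda_n(T)$, counted with algebraic multiplicity. Hence $\prod_{j=1}^n \lambda_j(T) = \det(T|_{E_n})$, and the task reduces to bounding a determinant on a finite-dimensional Banach space.

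The bridge to a Hilbert space setting comes from John's theorem, which provides an isomorphism $u\colon E_n \to \ell_2^n$ with $\|u\|\,\|u^{-1}\| \leq \sqrt n$. Since $u T|_{E_n} u^{-1}$ on $\ell_2^n$ has the same eigenvalues as $T|_{E_n}$, the classical Weyl inequality in Hilbert space yields $\prod_{j=1}^n |\lambda_j(T)| \leq \prod_{j=1}^n s_j(u T|_{E_n} u^{-1})$, and each singular value is controlled by $\sqrt n\, a_j(T)$. The naive combination produces a dimensional factor $n^{n/2}$, which is too weak to yield the additive $\ell^r$ form. \emph{This is the main obstacle}, and its resolution, following \cite{JKMR}, is a Hadamard-type volume argument: one approximates $T$ by finite-rank operators one step at a time and tracks the effect on the determinant, so that each rank reduction costs only a bounded factor (roughly $e$) rather than the crude $\sqrt n$, yielding the desired $C^n$ with $C$ absolute.

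Once the multiplicative inequality is in hand, I would conclude via Weyl's classical majorant theorem: for two nonincreasing nonnegative sequences $(\alpha_j)$ and $(\beta_j)$ satisfying $\prod_{j=1}^n \alpha_j \leq \prod_{j=1}^n \beta_j$ for every $n$, one has $\sum_j \Phi(\alpha_j) \leq \sum_j \Phi(\beta_j)$ for any $\Phi$ such that $\Phi\circ \exp$ is nondecreasing and convex. Applying this with $\alpha_j=|\lambda_j(T)|$, $\beta_j=C\,a_j(T)$, and $\Phi(x)=x^r$ (so $\Phi\circ \exp(t)=e^{rt}$ is convex and increasing) immediately delivers the claimed inequality with $c_r=C^r$ depending only on $r$.
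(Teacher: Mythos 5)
The paper does not prove this theorem; it is quoted verbatim from \cite{JKMR}, so there is no in-text proof to compare against. That said, your two-stage strategy (a multiplicative eigenvalue bound followed by Weyl's majorant lemma to pass to $\ell^r$) is a legitimate and by now standard route in the literature, but it is not the historical JKMR route: their 1979 additive inequality preceded the multiplicative forms later found by Pietsch and K\"onig.

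The genuine gap is in the multiplicative step. You correctly set up the Riesz spectral subspace $E_n$, pass to $\ell^2_n$ via John's theorem, apply the Hilbert-space Weyl inequality, and correctly diagnose that this yields only $\prod_{j\le n}|\lambda_j(T)|\le n^{n/2}\prod_{j\le n}a_j(T)$, which is useless for the additive conclusion. But the proposed fix---``a Hadamard-type volume argument\ldots so that each rank reduction costs only a bounded factor (roughly $e$) rather than $\sqrt n$''---is a gesture, not an argument. You do not say what the volume argument is, why a single rank reduction should cost only an absolute constant, or how the reductions compose to give $C^n$ with $C$ independent of $n$. As written, the inequality $\prod_{j\le n}|\lambda_j(T)|\le C^n\prod_{j\le n}a_j(T)$ remains unproved, and with it the whole proposal. (A clean way to close the gap within the framework of the paper itself: use Pietsch's Theorem~\ref{multi}, namely $|\lambda_{2n}(T)|\le e\,\bigl(\prod_{j\le n}a_j(T)\bigr)^{1/n}$, raise to the $r$-th power, sum in $n$, invoke Carleman's inequality $\sum_n\bigl(\prod_{j\le n}x_j\bigr)^{1/n}\le e\sum_n x_n$, and finish using monotonicity of $|\lambda_j|$ to pass from even to all indices. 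This is Pietsch's 1980 argument, not JKMR's, but it does give \eqref{bernard} with an explicit $c_r$.)

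Two smaller remarks. First, your Riesz-projection step needs a word about ties: when several eigenvalues share the modulus $|\lambda_n|$, one takes the full spectral projection associated with all eigenvalues of modulus $\ge|\lambda_n|$ and then a suitable $T$-invariant flag inside it; this is standard and not the main issue, but should be said. Second, the majorant-lemma step is fine as stated: $\Phi(x)=x^r$ satisfies the log-convexity hypothesis, and $(C a_j(T))$ is nonincreasing since $(a_j(T))$ is, so Weyl's lemma applies directly once the product inequality is in hand.
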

%Here $\lambda_{j}(T)$ and $a_{j}(T)$ stand respectively for the eigenvalues and approximation numbers of $T$ and $\Vert \cdot\Vert_{r}$ %denotes the $\ell^r$-norm.
Soon after this result was established, Pietsch found the following multiplicative improvement (see \cite[p. 156]{PIE} or \cite[Lemma 13]{PIET}) which we will use as well. 
\begin{theorem} [Pietsch] \label{multi}Let $T:X\to X$ be a bounded and power-compact linear operator from a Banach space $X$ to itself. Then
\begin{equation}\label{albrecht} \vert \lambda_{2n}(T)\vert\leq e\,\Big(\prod_{j=1}^{n} a_{j}(T)\Big)^{1/n}.\end{equation}
%where $\lambda_{j}(T)$ and $a_{j}(T)$ stand respectively for the eigenvalues and approximation numbers of $T$.
\end{theorem}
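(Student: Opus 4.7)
The plan is to pass to a finite-dimensional $T$-invariant subspace on which the product of the top $2n$ eigenvalues appears as a determinant, and then bound that determinant by the approximation numbers of $T$ through a Hadamard-type inequality adapted to the Banach setting. Since $T$ is power-compact, its nonzero spectrum is a discrete sequence of eigenvalues of finite algebraic multiplicity, and the Riesz spectral projections furnish a $T$-invariant subspace $E\subset X$ of dimension $2n$ spanned by generalized eigenvectors for $\lambda_{1}(T),\ldots,\lambda_{2n}(T)$. On $E$ the operator $T|_{E}$ is represented by a $2n\times 2n$ matrix whose eigenvalues, counted with algebraic multiplicity, are precisely those, so
$$\bigl|\det(T|_{E})\bigr|=\prod_{j=1}^{2n}|\lambda_{j}(T)|\;\ge\;|\lambda_{2n}(T)|^{2n}.$$

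Next I would bound $|\det(T|_{E})|$ from above. The determinant is intrinsic and may be computed in any norm on $E$, so one can pull the $X$-norm on $E$ back to a Hilbertian norm via John's ellipsoid, at the cost of a $\sqrt{2n}$ distortion. In a Hilbert space, $|\det(T|_{E})|$ equals the product of the singular values of $T|_{E}$, which coincide with its approximation numbers. To transfer these to $T$ itself I would invoke the ideal property: since $T(E)\subset E$, the restriction factors as $T|_{E}=PT\iota$ with $\iota\colon E\hookrightarrow X$ the inclusion and any linear projection $P\colon X\to E$, and the Kadets--Snobar theorem allows us to choose $P$ with $\|P\|\le\sqrt{2n}$, whence $a_{j}(T|_{E})\le\sqrt{2n}\,a_{j}(T)$ for every $j$. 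Combined with the monotonicity bound $\prod_{j=1}^{2n}a_{j}(T)\le\bigl(\prod_{j=1}^{n}a_{j}(T)\bigr)^{2}$, this chain produces an inequality of the form
$$|\lambda_{2n}(T)|^{2n}\;\le\;C(n)\,\Big(\prod_{j=1}^{n}a_{j}(T)\Big)^{\!2}.$$

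The main obstacle is sharpness of the constant. The crude combination of John's theorem and Kadets--Snobar outlined above yields a $C(n)$ that grows polynomially in $n$, so taking $2n$th roots leaves an extraneous factor of order $\sqrt{n}$ instead of the universal Euler constant $e$ claimed in \eqref{albrecht}. To reach the stated form, one must replace the Hadamard step by a genuinely sharp estimate on the $n$th exterior power $\Lambda^{n}T$, whose leading eigenvalue equals $\prod_{j=1}^{n}\lambda_{j}(T)$ and whose norm admits a volume-based bound of the form $\|\Lambda^{n}T\|\le e^{n}\prod_{j=1}^{n}a_{j}(T)$. Implementing this exterior-power estimate cleanly in a Banach space, rather than settling for the distorted Hilbertian argument, is the delicate point that secures the precise constant $e$ in Pietsch's inequality.
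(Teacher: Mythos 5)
You should first note that the paper does not prove this statement at all: it is quoted as Pietsch's theorem and justified by citation (see \cite[p.~156]{PIE} or \cite[Lemma 13]{PIET}), so there is no internal proof to compare with; your attempt has to stand on its own as a proof of Pietsch's inequality.

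As it stands it has a genuine gap, and you have correctly located it yourself: the argument you actually carry out does not prove the stated inequality. The sound part is the reduction via Riesz theory to a $2n$-dimensional $T$-invariant subspace $E$ with $|\det(T|_E)|=\prod_{j=1}^{2n}|\lambda_j(T)|\ge|\lambda_{2n}(T)|^{2n}$, the factorization $T|_E=PT\iota$ with a Kadets--Snobar projection, and the monotonicity step $\prod_{j=1}^{2n}a_j(T)\le\bigl(\prod_{j=1}^{n}a_j(T)\bigr)^{2}$. But the passage from $|\det(T|_E)|$ to approximation numbers through John's ellipsoid and Kadets--Snobar costs a factor of order $(2n)^{2n}$ inside the determinant bound, so after taking $2n$-th roots you obtain $|\lambda_{2n}(T)|\le Cn\bigl(\prod_{j=1}^{n}a_j(T)\bigr)^{1/n}$, a dimension-dependent constant rather than the universal constant $e$. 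The final paragraph, where you propose to repair this by the exterior-power estimate $\|\Lambda^nT\|\le e^n\prod_{j=1}^{n}a_j(T)$, is not a proof but a restatement of the difficulty: in a Banach space the exterior power does not even carry a canonical norm for which $\|\Lambda^nT\|$ controls $\prod_{j=1}^{n}|\lambda_j(T)|$, and establishing a usable substitute (Pietsch does it via Weyl numbers $x_j(T)\le a_j(T)$, factorization through Hilbert spaces of the restriction to the invariant subspace, and the classical Weyl inequality there, with $e$ emerging from an optimization of the auxiliary dimension) is precisely the content of the theorem. So the heart of the result --- the universal constant --- is assumed, not proven. Note also that the constant is not cosmetic for this paper: in the proof of part (b) of Theorem \ref{General} the inequality is applied with $N\approx n\log n$ in place of $n$, and a constant growing like $N$ would degrade the lower bound from $(n\log n)^{-\Real c_1}$ to roughly $(n\log n)^{-\Real c_1-1}$.
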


\section{ General properties of  $\Hp$}

\subsection{A Schauder basis for $\Hp$ and the partial sum operator}

We will make use of the following  result, first found by Helson \cite{HEL} (see also \cite[p. 220]{RUD}) in the framework of ordered groups and then reproved in a more concrete way in the context of ${\Hp}$-spaces by Aleman, Olsen, and Saksman \cite{ALOLSA}.
\begin{theorem}[Helson--Aleman--Olsen--Saksman] \label{thmhaos} For $1<p<\infty$, the sequence $(e_n)=(n^{-s})$  is a (conditional) Schauder basis for ${\Hp}$. \end{theorem}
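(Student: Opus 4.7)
The plan is to reduce the statement to a uniform norm bound on the partial sum operators
$$S_N\Big(\sum_{n=1}^\infty b_n n^{-s}\Big):=\sum_{n\leq N} b_n n^{-s}.$$
Density of Dirichlet polynomials in $\mathcal{H}^p$ is built into the definition, and $S_N P=P$ for $N$ large when $P$ is a Dirichlet polynomial, so the Schauder basis property will follow once I establish $\sup_N\|S_N\|_{\mathcal{H}^p\to\mathcal{H}^p}<\infty$.

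First, I would use the Bohr lift $\mathcal{H}^p\cong H^p(\mathbb{T}^\infty)$ to transfer the problem to a Fourier multiplier bound: $n^{-s}$ corresponds to $z^{\alpha(n)}$ where $n=\prod p_j^{\alpha_j}$. Since $S_N f$ only involves the primes $p_j\leq N$, it is enough to bound the multiplier on $H^p(\mathbb{T}^k)$ (with $k=\pi(N)$) whose symbol is the indicator of
$$A_N=\Big\{\alpha\in\mathbb{Z}_{\geq 0}^k:\sum_{j=1}^k\alpha_j\log p_j\leq\log N\Big\}.$$
Since the coordinatewise nonnegativity $\alpha\geq 0$ is automatic on the Hardy side, the problem further reduces to bounding, uniformly in $k$ and $N$, the projection on $L^p(\mathbb{T}^k)$ onto the half-space $H_{N}=\{\alpha\in\mathbb{Z}^k:\sum\alpha_j\log p_j\leq\log N\}$.

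The key observation is that $\log p_1,\ldots,\log p_k$ are $\mathbb{Q}$-linearly independent, so the linear functional $\Lambda(\alpha)=\sum\alpha_j\log p_j$ is injective on $\mathbb{Z}^k\setminus\{0\}$ and induces a total archimedean order on $\mathbb{Z}^k$ compatible with addition. Helson's theorem on analytic projections for ordered discrete abelian groups (see \cite[Ch.~8]{RUD}) then gives the $L^p$-boundedness of the shifted analytic projection onto $H_N$, with a constant depending only on $p$. This yields the required uniform bound on $S_N$ and, passing to $k\to\infty$, the Schauder basis property on $\mathcal{H}^p$.

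The main obstacle I anticipate is obtaining this bound uniformly in the dimension $k$. A naive variable-by-variable iteration of M.~Riesz's theorem on $H^p(\mathbb{T})$, each step costing a factor $C_p$, would produce a constant of order $C_p^k$, which blows up as more primes are admitted. Helson's argument circumvents this by viewing the half-space projection on $\mathbb{T}^k$ as essentially a single ``directional'' Hilbert transform along the irrational vector $(\log p_1,\ldots,\log p_k)$, whose norm is controlled by the one-variable Riesz constant $C_p$ independently of $k$; this is the step where Banach-space machinery must replace the orthogonality available in the $p=2$ case, and it is the decisive ingredient in the proof.
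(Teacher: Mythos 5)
The paper states this result without proof, referencing Helson \cite{HEL} (see also \cite[p.~220]{RUD}) and the concrete reproof of Aleman--Olsen--Saksman \cite{ALOLSA}; it does, however, invoke exactly the ordered-group framework in its proof of the companion bound $\Vert S_N\Vert_{\Ho\to\Ho}\ll\log N$ (Theorem~\ref{parsum}). Your argument is a correct and faithful reconstruction of the Helson route: reduce to $\sup_N\Vert S_N\Vert<\infty$, Bohr-lift to $H^p(\T^\infty)$, observe that on the Hardy side $S_N$ acts as the Fourier multiplier for a half-space in the total archimedean order on $\Z^{(\infty)}$ induced by $\alpha\mapsto\sum_j\alpha_j\log p_j$, and invoke the Bochner--Helson theorem on analytic projections for ordered duals to obtain a constant depending only on $p$ and not on the underlying group. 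You also correctly identify the dimension-free constant as the decisive point, where a naive iteration of the one-variable M.~Riesz theorem would lose a factor $C_p^{k}$. One detail worth making explicit: the shifted half-space projection onto $\{\Lambda(\alpha)\le\log N\}$ is conjugate, via multiplication by the monomial $z^{\alpha(N)}$ where $\alpha(N)$ is the exponent vector of $N$ (a lattice point once $k\ge\pi(N)$), to the fixed projection onto $\{\Lambda\le 0\}$; since multiplication by a character is an $L^p$-isometry the norm is unchanged, and the ``lower'' half-space projection has the same norm as the analytic projection by complex conjugation.
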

For every positive integer $N$, we define the partial sum operator $S_N:\Hp\to \Hp$ by the relation
\[ S_N\left(\sum_{n=1}^\infty x_n e_n\right):= \sum_{n\leq N} x_n\,e_n. \]  In \cite{ALOLSA}, Theorem~\ref{thmhaos} was obtained as a consequence of the uniform boundedness of $S_N$, i.e., the fact that there exists a constant $C=C_p$ such that 
%\begin{equation}\label{schauder} 
\[ \Vert S_N f \Vert_{\Hp} \leq C\Vert f\Vert_{\Hp} \] %\end{equation}
for every $f$ in $\Hp$. It is a general functional analytic fact that a complete sequence is a Schauder basis for a Banach space $X$ if and only if the associated partial sum operators $S_N$ are uniformly bounded  \cite{LIQU}. This leads to the following result.
\begin{lemma}[Contraction principle for Schauder bases] \label{contract} Let $X$ be a Banach space and assume that $(e_n)_{n\ge 1}$ is a Schauder basis for $X$. Let $N$ be a positive integer and $(\lambda_n)_{n\geq N}$ a nonincreasing sequence of nonnegative numbers. Then for every  $f=\sum_{n=1}^\infty x_n\, e_n$ in $X$,  
\begin{equation}\label{prinzip}\left\Vert \sum_{n\geq N}\lambda_n\,x_n\, e_n\right\Vert\leq 2C\lambda_N \Vert f\Vert,\end{equation}
where $C=\sup_n \| S_n\|$.
\end{lemma}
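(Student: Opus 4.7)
The natural approach is summation by parts (Abel's transformation), exploiting the telescoping structure $x_n e_n = S_n f - S_{n-1} f$ together with the monotonicity of $(\lambda_n)$. This mirrors the classical proof that a bounded sequence of partial sums can be tested against a nonincreasing weight sequence.

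More concretely, set $s_k := S_k f$ for $k \ge 0$ (with $s_0 = 0$), and for $N \le M$ write the finite sum
\[
T_M := \sum_{n=N}^{M} \lambda_n x_n e_n = \sum_{n=N}^{M} \lambda_n (s_n - s_{n-1}).
\]
Rearranging as in Abel's identity gives
\[
T_M = \lambda_M s_M - \lambda_N s_{N-1} + \sum_{n=N}^{M-1} (\lambda_n - \lambda_{n+1})\, s_n,
\]
where each coefficient $\lambda_n - \lambda_{n+1}$ is nonnegative by the monotonicity assumption. Since $\|s_k\| \le C\|f\|$ for every $k$, the triangle inequality yields
\[
\|T_M\| \le C\|f\|\Bigl(\lambda_M + \lambda_N + \sum_{n=N}^{M-1}(\lambda_n - \lambda_{n+1})\Bigr) = C\|f\|\,(2\lambda_N),
\]
the inner sum telescoping to $\lambda_N - \lambda_M$.

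It remains to pass to the limit $M \to \infty$. Since $(\lambda_n)_{n \ge N}$ is nonincreasing and nonnegative, it converges to some $L \ge 0$; as $s_M \to f$ in $X$, the first term $\lambda_M s_M$ tends to $Lf$. The residual series $\sum_{n \ge N}(\lambda_n - \lambda_{n+1}) s_n$ converges absolutely in $X$ because $\sum(\lambda_n - \lambda_{n+1}) = \lambda_N - L < \infty$ and $\|s_n\| \le C\|f\|$. Therefore $T_M$ converges in $X$ to the desired sum, and the bound above is preserved in the limit (here one uses $C \ge 1$, which holds since $\|S_1\| \ge 1$ for any Schauder basis). This establishes inequality \eqref{prinzip}.

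No real obstacle is anticipated; the only delicate point is ensuring convergence of the tail series in the Banach space $X$ without assuming $\lambda_n \to 0$, which is handled by the absolute convergence of the telescoped residual sum against the uniform bound on the partial-sum operators.
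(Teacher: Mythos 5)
Your proof is correct and uses the same Abel-summation device as the paper's one-line proof, which also rewrites $x_n e_n = S_n f - S_{n-1} f$ and regroups. You are in fact slightly more careful than the authors: their displayed identity implicitly discards the boundary term $\lim_{M\to\infty}\lambda_M S_M f = Lf$ with $L=\lim_n\lambda_n$, which vanishes only when $\lambda_n\to 0$; your remark that $C\ge 1$ lets you absorb this term and thereby covers the general case of a nonincreasing nonnegative (not necessarily null) sequence exactly as stated.
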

\begin{proof} We write $x_n e_n=S_nf-S_{n-1}f$ so that we get 
\[ \sum_{n\geq N}\lambda_n\,x_n\, e_n =- \lambda_{N} S_{N-1}f+\sum_{n\ge N} (\lambda_n-\lambda_{n+1})S_n f.\]
\end{proof}

The sequence $(n^{-s})$ fails to be a basis for $\mathscr{H}^1$, but not by much, as expressed by the following theorem.
\begin{theorem}\label{parsum}There exists a constant $C$ such that \begin{equation}\label{helsak}\Vert S_{N}f\Vert_{\mathscr{H}^1}\leq C\, \log N\, \Vert f\Vert_{\mathscr{H}^1}\end{equation}
for every $f$ in $\Ho$.

\end{theorem}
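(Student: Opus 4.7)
My plan is to represent $S_N$ as a Fourier multiplier in the log-scale variable and then exploit the vertical translation invariance of the $\Ho$ norm.

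First, I would establish the following general principle: for a function $m: \mathbb{R} \to \mathbb{C}$ with $m, \hat m \in L^1(\mathbb{R})$, the multiplier operator defined on Dirichlet polynomials by
\[ T_m\Bigl(\sum_n b_n n^{-s}\Bigr) := \sum_n b_n\, m(\log n)\, n^{-s} \]
admits the integral representation
\[ T_m f(s) = \frac{1}{2\pi}\int_{\mathbb{R}} \hat m(\xi)\, f(s - i\xi)\, d\xi, \]
obtained by Fourier inversion in the variable $\log n$. Combined with Minkowski's integral inequality and the invariance $\|f(\cdot - i\xi)\|_{\Ho} = \|f\|_{\Ho}$, this yields $\|T_m\|_{\Ho \to \Ho} \leq \|\hat m\|_{L^1}/(2\pi)$.

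The operator $S_N$ corresponds to the multiplier $m = \mathbf{1}_{(-\infty,\log N]}$, which is not in $L^1$. I would therefore regularize: fix a smooth probability bump $\rho$ and set $\rho_\delta(x) = \delta^{-1}\rho(x/\delta)$; shift the cut-off to $a \in [\log N, \log(N+1))$ and choose $\delta < \log(N+1) - a$ small enough (say $\delta$ of order $1/N$) that the transition interval $(a - \delta, a + \delta)$ contains no $\log n$ for any integer $n \geq 1$. With these choices, $T_{\mathbf{1}_{(-\infty,a]}*\rho_\delta}$ coincides with $S_N$ on Dirichlet polynomials, and a direct Fourier computation gives
\[ \widehat{\mathbf{1}_{(-\infty,a]}*\rho_\delta}(\xi) = e^{-i\xi a}\Bigl(\pi\, \delta_0(\xi) - \frac{\hat\rho(\delta\xi)}{i\xi}\Bigr). \]
The Dirac mass contributes the identity operator, and the off-zero portion of the principal-value integral is bounded in $L^1$ by $\int_{|\xi|\geq 1} |\hat\rho(\delta\xi)/\xi|\, d\xi \lesssim \log(1/\delta) \sim \log N$ after the substitution $\eta = \delta\xi$.

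The main obstacle is the principal-value behavior near $\xi = 0$, where the kernel $1/(i\xi)$ has a non-$L^1$ singularity. To handle this, the key point is to exploit the one-sided structure of the Bohr lift: Dirichlet series in $\Ho$ correspond to functions on $\mathbb{T}^\infty$ whose Fourier coefficients are supported on the characters of positive integers. After subtracting the delta-mass contribution, the residual principal-value piece reduces to a modulated and truncated Hilbert transform acting on this analytic subspace, and a careful computation (using either a sine-integral expansion on the frequencies $\{\log n - a\}$ or a Fejer-type averaging to regularize the singularity) shows that it contributes at most $O(\log N)$ to the overall bound. Altogether this yields the desired inequality $\|S_N f\|_{\Ho} \leq C \log N\, \|f\|_{\Ho}$.
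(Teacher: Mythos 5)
Your overall plan---realize the partial sum as a Fourier multiplier in the $\log n$ variable, reduce to an $L^1$ estimate for the corresponding kernel via vertical translation invariance, then deal separately with the error between a smoothed cutoff and the true cutoff---is exactly the strategy of Saksman's proof that the paper presents (equations \eqref{sakiden}--\eqref{saksmidenti} and the discussion that follows). The first half of your argument is therefore sound and matches the source.

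The gap is in the choice of multiplier. You take $m=\mathbf{1}_{(-\infty,a]}\ast\rho_\delta$ with $\delta$ of order $1/N$: a \emph{one-sided step with a single smoothed edge}. Its Fourier transform has a $1/\xi$ singularity at $\xi=0$ (with weight $\hat\rho(0)=1$) that no amount of edge-smoothing removes, so the kernel is not in $L^1$ and the Minkowski bound $\Vert T_m\Vert\le\Vert\check m\Vert_1$ simply does not apply. Your computation that the tail $\int_{|\xi|\ge1}|\hat\rho(\delta\xi)/\xi|\,d\xi\sim\log N$ is correct, but the near-origin contribution diverges. You flag this yourself, but the proposed remedy---passing to a modulated Hilbert transform and invoking the one-sided spectrum of $\Ho$---is circular: the modulated Hilbert transform $f\mapsto\sum_n b_n\operatorname{sgn}(\log n-a)\,n^{-s}$ equals $f-2S_Nf$ (up to inessential terms), so bounding its $\Ho$-norm by $O(\log N)$ is precisely the statement you are trying to prove, and the one-sided spectrum of $\Ho$ is already baked into the problem without preventing the logarithmic growth. ``Fejer-type averaging'' replaces $S_N$ by a Ces\`aro mean, which is a different operator altogether.

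What actually rescues the argument in the paper is a different use of the one-sidedness of $\Ho$: because the frequencies $\log n$ all sit in $[0,\infty)$, one is free to replace the one-sided step by a \emph{compactly supported, two-sided} trapezoidal bump $\widehat\psi(x)=\Lambda(x/\log N)$, which agrees with $\mathbf{1}_{(-\infty,a]}$ on all relevant $\log n$ except for $O(\log N)$ of them. The trapezoid, being a convolution $\Delta_1\ast\Delta_{1/N}$ of two box functions, has Fourier transform a product of two sincs, which is bounded near $0$ and decays like $1/\xi^2$ at infinity; its $L^1$ norm is exactly $\asymp\log N$. Your construction instead smooths only the right edge and leaves the left tail running to $-\infty$, which is what produces the non-integrable singularity. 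Replacing your one-sided smoothed step with the two-sided trapezoid (and absorbing the $O(\log N)$ mismatched coefficients via the trivial bound $|b_n|\le\Vert f\Vert_{\Ho}$) closes the gap and reproduces the paper's argument.
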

\begin{proof} We can appeal  to Helson's work  \cite{HEL} which deals with a compact connected group $G$ and its ordered dual  $\Gamma$. We take $G=\T^\infty,\ \Gamma=\Z^{(\infty)}$ along with the order 
$$\alpha\leq \beta \hbox{\quad if}\quad \sum_{j\geq 1} \alpha_j\log p_j\leq \sum_{j\geq 1} \beta_j\log p_j.$$
Here $\alpha=(\alpha_j),\ \beta=(\beta_j)$, and $(p_j)$ denotes the sequences of primes. 

Given $0<p<1$, Helson's result implies fairly easily that
%\begin{equation}\label{henry}
\[ \Vert S_{N}f\Vert_{\Hp}^{p}\ll \left(\cos \pi p/2\right)^{-1} \Vert f\Vert_{\Ho}^{p}\ll (1-p)^{-1} \Vert f\Vert_{\Ho}^{p}\] %\end{equation}
when $f$ is in $\Hp$. Let now $f(s)=\sum_{n=1}^\infty b_{n}n^{-s}$ be in $\Ho$.  Observe that $|b_n|\leq \Vert f\Vert_{\Ho}$ which implies the pointwise estimate $|S_{N}f|\leq N\,\Vert f\Vert_{\Ho}$. Using the Bohr lift and integrating over $\T^\infty$ with its Haar mesure $m_\infty$, we obtain 

\begin{eqnarray*}
\Vert S_{N}f\Vert_{\Ho}&= & \int|S_{N}f|dm_\infty=\int|S_{N}f|^{1-p}|S_{N}f|^{p}dm_\infty \\ 
& \le & N^{1-p}(\Vert f\Vert_{\Ho})^{1-p}\int|S_{N}f|^{p}dm_\infty \\ 
& \ll & N^{1-p}(\Vert f\Vert_{\Ho})^{1-p}(1-p)^{-1}(\Vert f\Vert_{\Ho})^{p}= N^{1-p}(1-p)^{-1}\Vert f\Vert_{\Ho}.\end{eqnarray*} 
Now choosing $p=1-1/\log N$, we arrive at \eqref{helsak} since then $N^{1-p}=e$. 
\end{proof} 
We will now give an alternate and self-contained proof of Theorem~\ref{parsum}. This proof is an application of a simple but powerful identity shown by Eero Saksman to the second-named author \cite{EERO}, which most likely will have other applications in the study of the spaces $\Hp$.  The initial application Saksman had in mind  concerned  the conjugate exponent $p=\infty$ or, in other words,  the Banach algebra $\Hi$ which consists of Dirichlet series that define bounded analytic functions on $\C_0$.  Equipped with the natural $H^\infty$ norm on $\C_0$, ${\Hi}$ is isometrically  equal to the multiplier algebra of ${\Hp}$ \cite{BAY, HLS}.

We will use the notation 
\[\widehat{\psi}(\xi)=\int_{-\infty}^\infty e^{-it\xi} \psi(t)dt\] for the Fourier transform of a function $\psi$ in $L^1(\RR)$. If $\psi$ is in $L^1(\RR)$ and $f(s)=\sum_{n=1}^\infty b_n n^{-s}$ is a Dirichlet series that is absolutely convergent in some half-plane $\C_\theta$, then the function 
\begin{equation}\label{sakiden} P_{\psi}f(s):=\sum_{n=1}^\infty b_n n^{-s}\widehat{\psi}(\log n)=\int_{-\infty}^\infty f(s+it)\psi(t)dt, \quad s\in \C_\theta, \end{equation}
is again a Dirichlet series. It is clear that $P_\psi f$ is absolutely convergent wherever $f$ is absolutely convergent. If $\psi$ is in $L^{1}(R)$ with $\widehat{\psi}$ compactly supported and $f(s)=\sum_{n=1}^\infty b_n n^{-s}$ is a Dirichlet series that is bounded in some half-plane, then we have the identity between $P_{\psi} f(s)$ and the integral on the right-hand side of \eqref{sakiden}
throughout that half-plane. If $f$ is in $\Hp$ for some $p$, then we may write
\begin{equation}\label{saksident} P_{\psi}f=\int_{-\infty}^\infty (T_{t}f)\,\psi(t)dt,\end{equation}
where the right-hand side denotes a vector-valued integral in $\Hp$ and $T_{t}:\Hp\to \Hp$ is the vertical translation-operator defined by $T_{t}f(s)=f(s+it)$. The utility of \eqref{sakiden} rests on the fact that $T_{t}$ acts isometrically on $\Hp$ for every $1\le p \le \infty$. Saksman's vertical convolution formula \eqref{sakiden} can now be applied to yield the following result.

\begin{lemma} If $ \psi$ is in $ L^{1}(\mathbb{R})$ and $f(s)=\sum_{n=1}^\infty b_n n^{-s}$ is in $\Hp$ for some $1\le p \le \infty$, then
\begin{equation}\label{saksmidenti} \Vert P_{\psi}f \Vert_{\Hp}\leq \Vert f\Vert_{\Hp}\,\Vert \psi\Vert_1.\end{equation}
\end{lemma}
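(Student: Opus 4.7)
The plan is to apply the triangle inequality for vector-valued integrals directly to the representation (\ref{saksident}), leveraging the fact already noted in the text that the vertical translation $T_t$ acts isometrically on $\Hp$ for every $1\le p\le \infty$.

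For $1\le p<\infty$ I would begin with $f$ a Dirichlet polynomial, say $f(s)=\sum_{n=1}^N b_n n^{-s}$. Then $P_\psi f(s) =\sum_{n=1}^N b_n \widehat{\psi}(\log n) n^{-s}$ is again a Dirichlet polynomial, and the map $t\mapsto T_t f$ is continuous from $\RR$ into $\Hp$ because it is continuous on each basis vector $n^{-s}=n^{-\sigma-it\log n}$. Hence (\ref{saksident}) holds as a genuine Bochner integral in $\Hp$, and Minkowski's inequality combined with $\Vert T_t f\Vert_{\Hp}=\Vert f\Vert_{\Hp}$ yields
\[ \Vert P_\psi f\Vert_{\Hp} \leq \int_{-\infty}^\infty \Vert T_t f\Vert_{\Hp}\,|\psi(t)|\, dt = \Vert f\Vert_{\Hp}\Vert \psi\Vert_1. \]
Since $\Hp$ is by definition the completion of the Dirichlet polynomials, this bound propagates by continuity to a bounded operator on all of $\Hp$ with norm at most $\Vert \psi\Vert_1$, and the extension agrees with the coefficientwise definition (\ref{sakiden}).

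For the remaining case $p=\infty$ density fails, so I would switch to a pointwise argument. If $f\in\Hi$ then $f$ is bounded on $\C_0$ with $|f(s+it)|\le\Vert f\Vert_{\Hi}$ for every $s\in\C_0$ and $t\in\RR$, so the integral
\[ P_\psi f(s)=\int_{-\infty}^\infty f(s+it)\,\psi(t)\,dt \]
converges absolutely on $\C_0$ and satisfies $|P_\psi f(s)|\le \Vert f\Vert_{\Hi}\Vert \psi\Vert_1$; taking the supremum over $s\in\C_0$ gives (\ref{saksmidenti}).

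The main obstacle is purely a matter of bookkeeping: confirming that the two descriptions of $P_\psi f$ (Dirichlet series coefficients versus vector-valued integral) agree and that (\ref{saksident}) makes sense as a Bochner integral. On polynomials the two formulas coincide by a direct Fourier-transform computation and the integrand is trivially continuous; once the polynomial case is settled, density (for $p<\infty$) or the pointwise formula (for $p=\infty$) finishes the argument with no further analytic difficulties.
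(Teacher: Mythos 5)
Your proof is correct and follows essentially the same route as the paper: the key step in both is applying the triangle inequality for vector-valued integrals to the representation \eqref{saksident} together with the isometric action of $T_t$ on $\Hp$. You simply spell out the density/pointwise bookkeeping (polynomials and completion for $p<\infty$, direct supremum bound for $p=\infty$) that the paper leaves implicit.
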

\begin{proof} By \eqref{saksident} and the vertical translation invariance of the norm in $\Hp$, we get 
\[ \Vert P_{\psi} f\Vert_{\Hp}\leq\int_{-\infty}^\infty \Vert T_{t} f\Vert_{\Hp}\,|\psi(t)| dt=\Vert f\Vert_{\Hp}\,\Vert \psi\Vert_1.\]

\end{proof}
We now turn to Saksman's alternate proof of Theorem~\ref{parsum}. The idea is to choose $\psi$ so that $\widehat{\psi}$ is smooth and $P_\psi f$ is a good approximation to $S_N f$. A suitable trade-off between these requirements can be made as follows.
% in the case $p $\Hi$, and which works as well for $\Ho$.  
Let $\Delta_h$ be the indicator function of the interval $[-h, h]$ for $h>0$, and let $\Lambda=(N/2)(\Delta_1\ast \Delta_{1/N})$ be the even trapezoidal function with nodes at $1-1/N$ and $1+1/N$. Then $\Lambda(x)=1$  for $x$ in $[-(1-1/N),\ 1-1/N]$. We see that 
$$\widehat {\Lambda}(\xi)=(N/2)\, \widehat {\Delta_1}(\xi) \widehat {\Delta_{1/N}}(\xi)=(N/2)\, \frac{\sin \xi\sin \xi/N}{\xi^2},$$
which implies that $\Vert \widehat{\Lambda}\Vert_1\ll \log N$.  We choose $\psi$ by requiring that $\widehat{\psi}(x)=\Lambda(x/\log N)$. Then, by the Fourier inversion formula, we have $\| \psi \|_1\ll \log N$. 
In addition, we observe that $P_\psi f$ differs from $S_{N}f=\sum_{n=1}^N b_n n^{-s}$ by at most $$O\big[N(e^{\log N/N} -e^{-\log N/N})\big]=O(\log N)$$ terms that are all of size $\ll \Vert f\Vert_{\Ho}$ since $|b_n|\leq \Vert f\Vert_{\Ho}$. In view of \eqref{saksmidenti}, this ends the second proof of Theorem~\ref{parsum}.

\subsection{The Bohr lift.} Let $\varphi$ be a function in $\GH$ with $c_0=0$, and assume that $\overline{\varphi(\C_0)}$ is a bounded subset of $\C_{1/2}$. We define $\Delta:\C_{1/2}\to \D^{\infty}\cap \ell^2$ by requiring 
$\Delta(s):=(p_{j}^{-s}),$ where $(p_j)$ denotes the sequence of primes. Then there exists a unique analytic map $\Phi:\D^{\infty}\cap \ell^2\to \C_{1/2}$ such that 
\begin{equation}\label{lifting} \Phi\circ \Delta=\varphi.\end{equation}
We call $\Phi$ the Bohr lift of $\varphi$; we  set $\Phi^{\ast}(z):=\lim_{r\to 1}\Phi(rz)$, where $z$ is a point in $\T^\infty$. We denote by $m_\infty$ the Haar measure of $\T^\infty$ and define the pullback measure $\mu_{\varphi}$  by 
\begin{equation}\label{ollssak}\mu_{\varphi}(E):=m_{\infty}(\{z\in \T^\infty : \Phi^{\ast}(z)\in E\  \})=m_{\infty}\left((\Phi^{\ast})^{-1}(E\ )\right).\end{equation}
We will need the following basic result about the Bohr lift  \cite{QS1}.
\begin{theorem}\label{blift} Suppose that $\varphi$ is in $ \GH$, $c_0=0$, and that $\overline{\varphi(\C_0)}$ is a bounded subset of $\C_{1/2}$. Then, for every $f$ in $\Hp$, $1\le p <\infty$, we have
 $$\Vert C_{\varphi}(f)\Vert_{{\Hp}}^{p}=\int_{\T^\infty} \vert f(\Phi^{\ast}(z))\vert^{p}dm_{\infty}(z)=\int_{\overline{\varphi(\C_0)}} \vert f(s)\vert^{p}d\mu_{\varphi}(s).$$
\end{theorem}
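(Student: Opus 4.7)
The second equality is immediate: by definition~\eqref{ollssak}, $\mu_\varphi$ is the pushforward of $m_\infty$ under $\Phi^*$, so the standard change-of-variables formula gives the identity for any $\mu_\varphi$-measurable $f$, which $f \in \Hp$ certainly is on the compact set $K := \overline{\varphi(\C_0)} \subset \C_{1/2}$. The substance lies in the first equality, which I would establish first for Dirichlet polynomials and then extend by density.

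For a Dirichlet polynomial $f(s) = \sum_{n=1}^N b_n n^{-s}$, the composition $f \circ \varphi$ is bounded on $\C_0$ (its values lie in the compact set $f(K) \subset \C$) and expands as a Dirichlet series in $\Hi$. Let $G$ denote its Bohr lift on $\D^\infty$. Using $\Phi \circ \Delta = \varphi$ together with the defining relation $f(s) = \sum_n b_n \prod_j p_j^{-\alpha_j(n) s}$, a direct computation gives $G(w) = f(\Phi(w))$ for every $w \in \Delta(\C_0)$, and since both sides are analytic on $\D^\infty \cap \ell^2$ the identity persists throughout that set. Taking boundary values along rays $rz \to z$ as $r \to 1^-$, the containment $\Phi^*(z) \in K$ and continuity of $f$ on $\C_{1/2}$ imply $f(\Phi(rz)) \to f(\Phi^*(z))$ for a.e.\ $z \in \T^\infty$, while the standard boundary-value theory on $\T^\infty$ (applicable because $G \in \Hi$) gives $G(rz) \to G^*(z)$ a.e. Hence $G^*(z) = f(\Phi^*(z))$ a.e., and the Bohr lift isometry $\|C_\varphi f\|_{\Hp}^p = \|G\|_{H^p(\T^\infty)}^p$ delivers the first equality for polynomials.

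For general $f \in \Hp$, I would approximate $f$ in $\Hp$ norm by a sequence of Dirichlet polynomials $f_k$. Since evaluation at any $s \in \C_{1/2}$ is a bounded linear functional on $\Hp$ with norm depending only on $\Real s$, and $K$ is compact in $\C_{1/2}$, the convergence $f_k \to f$ is uniform on $K$. Applied to $f_k - f_l$, the polynomial case gives
\[
\|C_\varphi f_k - C_\varphi f_l\|_{\Hp}^p = \int_{\T^\infty} |(f_k - f_l)(\Phi^*(z))|^p \, dm_\infty(z) \le \bigl(\sup_K |f_k - f_l|\bigr)^p,
\]
so $\{C_\varphi f_k\}$ is Cauchy in $\Hp$ with some limit $g$; continuity of point evaluation on both $\Hp$ and $\Hp$ (for the target variable) identifies $g(s) = f(\varphi(s)) = C_\varphi f(s)$, so $C_\varphi f \in \Hp$. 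Letting $k \to \infty$ in the polynomial identity (dominated convergence with dominant $(\sup_K |f| + 1)^p$) yields the first equality in general.

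The main subtlety is the almost-everywhere identification $G^*(z) = f(\Phi^*(z))$, which relies essentially on the hypothesis that $\overline{\varphi(\C_0)}$ be \emph{compactly} contained in $\C_{1/2}$: without it, $\Phi^*(z)$ could cluster on the critical line $\Real s = 1/2$, where an arbitrary $f \in \Hp$ need not be continuous, and the interchange of limit and evaluation would fail. The remaining ingredients — density of Dirichlet polynomials, continuity of point evaluation, uniform boundedness on $K$ — are standard once this hypothesis is in place.
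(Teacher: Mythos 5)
The paper does not contain a proof of Theorem~\ref{blift}; it is imported verbatim from \cite{QS1}, so there is no internal argument to compare your attempt against. Your overall strategy --- verify the formula for Dirichlet polynomials, extend by density, obtain the second equality as a change of variables for the pushforward measure --- is the natural one.

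There is, however, a genuine gap in the step ``since both sides are analytic on $\D^\infty\cap\ell^2$ the identity persists throughout that set.'' The set $\Delta(\C_0)$ is a one-(complex-)dimensional analytic curve inside the infinite-dimensional domain $\D^\infty\cap\ell^2$, and the identity principle cannot propagate equality from such a thin set: already in $\D^2$, the functions $z_1z_2$ and $0$ agree on $\{z_2=0\}$ but not on all of $\D^2$, and the same phenomenon occurs for transcendental curves. What makes $G=f\circ\Phi$ true is not the identity principle but the injectivity of the Bohr correspondence. Both $G$ (a Bohr lift) and $f\circ\Phi$ (the composition of the Dirichlet polynomial $f$, entire in $s$, with the power series $\Phi$) admit monomial expansions $\sum_\alpha a_\alpha z^\alpha$; restricting any such series to $\Delta(\C_0)$ yields $\sum_\alpha a_\alpha\, n(\alpha)^{-s}$ with $n(\alpha)=\prod_j p_j^{\alpha_j}$, and unique factorization makes $\alpha\mapsto n(\alpha)$ a bijection, so agreement on $\Delta(\C_0)$ forces every coefficient to agree. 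Equivalently, one can verify directly, by expanding the exponential and using multiplicativity of the Bohr lift, that the Bohr lift of $k^{-\varphi}$ is $k^{-\Phi}$ and then sum over $k$. Once this step is repaired, the remaining ingredients --- the containment $\Phi^{\ast}(z)\in K$ for a.e.\ $z$, radial boundary values of the $\Hi$-function $f\circ\varphi$, uniform convergence on $K$ from the boundedness of point evaluations on a compact subset of $\C_{1/2}$, and dominated convergence --- go through as you describe, and they indeed hinge precisely on the hypothesis that $\overline{\varphi(\C_0)}$ is a compact subset of $\C_{1/2}$, as you correctly emphasize at the end.
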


\subsection{Type and cotype of $(\Hp)^*$}
Let $(\varepsilon_j)$ denote a Rademacher sequence and $\mathbb{E}$ the expectation. We recall that a Banach space $X$ is of type $p^{\ast}$ with $1\leq p^{\ast}\leq 2$ if, for some constant $C\geq 1$,
$$\mathbb{E}\left\Vert \sum \varepsilon_j x_j\right\Vert\leq C \left(\sum\Vert x_j\Vert^{p^{\ast}}\right)^{1/p^{\ast}}$$ for every finite sequence $(x_j)$ of vectors from $X$. The smallest constant $C$, denoted by $T_{p^\ast}(X)$, is called the type $p^\ast$-constant of $X$.
Similarly,  a Banach space $Y$ is said to be of cotype $q^{\ast}$ with $2\leq q^{\ast}\leq \infty$  if 
$$\mathbb{E}\left\Vert \sum \varepsilon_j y_j\right\Vert\geq C^{-1} \left(\sum\Vert y_j\Vert^{q^{\ast}}\right)^{1/q^{\ast}}$$ for every finite sequence $(y_j)$ of vectors of $Y$. The smallest constant $C$, denoted by $C_{q\ast}(Y)$, is called the cotype $q^\ast$-constant of $Y$.

As already mentioned, the space $X=\Hp$ is well understood as the subspace $H^{p}(\T^\infty)$ of $L^{p}(\T^\infty)$, but it is not complemented in $L^{p}(\T^\infty)$. This means that its dual $Y$ is something rather mysterious. But the following fact will be sufficient for our purposes. 
\begin{lemma}\label{type} The Banach space $Y=({\Hp})^{\ast}$ is of cotype $\  \max (q,2)$ where $q$ is the conjugate exponent of $p$. 
\end{lemma}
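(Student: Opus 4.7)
The plan is to exploit the isometric embedding $\mathcal{H}^p \hookrightarrow L^p(\mathbb{T}^\infty)$ provided by the Bohr lift, together with the standard duality between type of a Banach space and cotype of its dual. The upshot is that since $L^p(\mathbb{T}^\infty)$ has type $\min(p,2)$, its subspace $\mathcal{H}^p$ does too, and passing to the dual yields cotype equal to the conjugate exponent of $\min(p,2)$, which is precisely $\max(q,2)$.

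In more detail, I would first recall two classical facts from the local theory of Banach spaces. On the one hand, for $1\le p<\infty$ the space $L^p(\mu)$ over any measure space is of type $\min(p,2)$; in particular $L^p(\mathbb{T}^\infty)$ is of type $\min(p,2)$, and since type passes to subspaces, so is $\mathcal{H}^p=H^p(\mathbb{T}^\infty)$. On the other hand, a theorem (essentially due to Maurey--Pisier) asserts that if a Banach space $X$ has type $r\in(1,2]$, then $X^\ast$ has cotype $r'=r/(r-1)$. Applying this with $X=\mathcal{H}^p$ and $r=\min(p,2)$ and observing that the conjugate exponent of $\min(p,2)$ is $\max(q,2)$ yields the lemma in the range $1<p<\infty$: when $1<p\le 2$ one obtains cotype $q$, and when $p\ge 2$ one obtains cotype $2$, which in each case matches $\max(q,2)$.

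The remaining boundary case $p=1$ requires a separate (but trivial) comment: then $q=\infty$, so $\max(q,2)=\infty$, and the assertion ``cotype $\infty$'' is vacuous — every Banach space satisfies it. Thus no extra work is needed at $p=1$, which is fortunate since the Maurey--Pisier duality argument only produces nontrivial cotype when the underlying space has nontrivial type, i.e.\ $r>1$.

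There is no real obstacle here: the only point that deserves care is to invoke the correct direction of the type/cotype duality, namely that type on $X$ yields cotype on $X^\ast$ (and not vice versa), which is the one we need because cotype, unlike type, does not in general pass to quotients, so one cannot argue directly from $(\mathcal{H}^p)^\ast$ being a quotient of $L^q(\mathbb{T}^\infty)$.
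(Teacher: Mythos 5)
Your argument is essentially the same as the paper's: both identify $\Hp$ with the subspace $H^p(\T^\infty)$ of $L^p(\T^\infty)$, invoke Fubini plus Khintchine to get type $\min(p,2)$, and then apply Pisier's type--cotype duality to the dual. The only thing the paper records that you omit is the explicit constant estimate $C_{q^\ast}(Y)\leq 2\,T_{p^\ast}(\Hp)$, which it needs quantitatively later in the proof of Theorem~\ref{belowc2}; your remark that the $p=1$ case is vacuous is a small clarification the paper leaves implicit.
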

\begin{proof} $\Hp$ is isometric to the subspace $H^{p}(\T^\infty)$ of $L^{p}(\T^\infty)$. The latter space is of type $p^{\ast}=\min (p,2)$ by Fubini's theorem and Khintchin's inequality. According to a result of Pisier (from \cite{PIS}, see also  \cite{MAUPIS}, \cite[p. 220]{DIJATO}, or \cite[p. 165]{LIQU}), its dual $Y$ is of cotype $q^{\ast}=\max(q,2)$\quad  ($1/p+1/q=1$)  and, moreover,  
\begin{equation}\label{gipi}C_{q\ast}(Y)\leq 2 T_{p^\ast}(\Hp).\end{equation} Hence the conclusion of the lemma follows.\end{proof}

\subsection{Description of  the spectrum of compact composition operators}\label{Bayarts}
 A complete description of the eigenvalues of $C_\varphi$ on $\Ht$ was given in  \cite{BAYA}. We will  show that this result is valid for  $\Hp$ as well:

\begin{theorem}\label{fred} Let $\varphi(s)=c_{0}s+\sum_{n=1}^\infty c_n\, n^{-s}$ induce a compact operator on ${\Hp},\ 1\leq p<\infty$. Then, the eigenvalues of $\ C_\varphi$ have multiplicity one and  are 
\begin{itemize}
\item[(a)] $\lambda_n=[\varphi'(\alpha)]^{n-1}, \quad n=1,2,\ldots,$ when $c_0=0$, where $\alpha$ is the fixed point of $\varphi$ in $\C_{1/2}$. 
\item[(b)] $\lambda_n=n^{-c_1},\quad n=1,2,\ldots, $ when $c_0=1$.

\end{itemize}
\end{theorem}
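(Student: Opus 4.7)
The plan is to adapt the Koenigs-type analysis from the $\mathcal{H}^2$ setting of \cite{BAYA} to the Banach space setting by observing that the natural eigenfunctions actually sit in $\mathcal{H}^\infty$ and so belong to every $\mathcal{H}^p$, while conversely every $\mathcal{H}^p$-eigenfunction is forced to coincide with one of these. The scheme splits into constructing the stated eigenfunctions, ruling out any other eigenvalues by iteration, and proving simplicity via an invariance-of-ratio argument.

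For the construction, in case (a) one defines $\sigma(s):=\lim_{k\to\infty}\varphi'(\alpha)^{-k}\bigl(\varphi^{[k]}(s)-\alpha\bigr)$ on $\C_{1/2}$; compactness forces $|\varphi'(\alpha)|<1$, and the classical contraction-at-$\alpha$ estimate yields uniform convergence to a bounded analytic function satisfying $\sigma\circ\varphi=\varphi'(\alpha)\sigma$, so that $\sigma^{n-1}\in\Hi\subset\Hp$ is an eigenfunction for $\varphi'(\alpha)^{n-1}$. In case (b) one instead sets $\sigma(s):=\lim_{k\to\infty}\bigl(\varphi^{[k]}(s)-kc_1\bigr)$; writing this as the telescoping sum $s+\sum_{j\ge 0}\bigl(\psi(\varphi^{[j]}(s))-c_1\bigr)$ and using that $\psi-c_1=\sum_{k\ge 2}c_kk^{-s}$ decays geometrically while $\Real\varphi^{[j]}(s)$ grows linearly (compactness needs $\Real c_1>0$), one obtains $\sigma(s)=s+\tau(s)$ with $\tau$ a bounded Dirichlet series vanishing at infinity. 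Then $f_n(s):=n^{-\sigma(s)}=n^{-s}\exp(-\tau(s)\log n)$ is itself a Dirichlet series in $\Hi$ and verifies $f_n\circ\varphi=n^{-c_1}f_n$.

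For exhaustiveness and simplicity, take any eigenfunction $f\in\Hp$ with $C_\varphi f=\lambda f$, $\lambda\neq 0$, and iterate to $f\circ\varphi^{[k]}=\lambda^k f$. In case (a) the iterates contract to $\alpha$, so letting $m$ be the order of vanishing of $f$ at $\alpha$ and comparing the asymptotics $f(\varphi^{[k]}(s))\sim \text{const}\cdot\varphi'(\alpha)^{km}(s-\alpha)^m$ with $\lambda^kf(s)$ forces $\lambda=\varphi'(\alpha)^m$. Setting $h:=f/\sigma^m$, the ratio extends analytically across $\alpha$, is $C_\varphi$-invariant, and by orbit-convergence $h(s)=\lim_k h(\varphi^{[k]}(s))=h(\alpha)$, hence $f$ is a scalar multiple of $\sigma^m$. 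In case (b), since $\Hp\subset\Ho$ by H\"older and Dirichlet coefficients satisfy $|b_n|\le\|f\|_{\Ho}$, the values $f(\varphi^{[k]}(s))$ tend to $b_1$ as $\Real\varphi^{[k]}(s)\to\infty$; this rules out $|\lambda|\ge 1$ except $\lambda=1$ (giving the constant eigenfunction, $m=1$) and for $|\lambda|<1$ forces $b_1=0$. If $m$ is the first index with $b_m\neq 0$, matching the leading orders of $f(\varphi^{[k]}(s))$ and $\lambda^k f(s)$ as $\Real s\to\infty$ yields $\lambda=m^{-c_1}$; the ratio $h:=f/f_m$ is then $C_\varphi$-invariant and bounded at infinity (both $f$ and $f_m$ have leading term a multiple of $m^{-s}$), so $h\equiv h(\infty)$ by the same orbit-convergence argument.

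The main obstacle is the construction step in case (b): verifying that the formal limit $\tau=\sigma-s$ is genuinely a bounded Dirichlet series on $\C_0$ with no constant term, and that $n^{-s}e^{-\tau(s)\log n}$ is a bona fide element of $\Hi$ rather than just an analytic function. Once this is in hand, the remaining arguments are soft and mimic the $p=2$ treatment; the passage to general $p$ is painless because the eigenfunctions produced live in $\Hi$ and because the one-dimensionality reduction uses only pointwise invariance and half-plane asymptotics, both insensitive to the particular $\Hp$-norm.
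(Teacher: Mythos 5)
Your exhaustiveness and simplicity arguments (Koenigs uniqueness in case (a), leading-coefficient/asymptotic matching in case (b)) are sound and essentially what the paper also relies on, but the construction half of your plan has a genuine gap, and it sits exactly at the point the theorem is about. In case (a) you assert that compactness yields \emph{uniform} convergence of $\varphi'(\alpha)^{-k}\bigl(\varphi^{[k]}-\alpha\bigr)$ and hence a bounded Koenigs function $\sigma\in\Hi$. Compactness of $C_\varphi$ does not force $\overline{\varphi(\C_0)}$ to be a compact subset of $\C_{1/2}$: the image may be unbounded or touch the line $\Real s=1/2$, in which case the Koenigs iteration converges only locally uniformly and $\sigma$ is in general unbounded (this already happens for compact composition operators on $H^2(\D)$ whose symbol has sup-norm one); one would also have to check that the limit is a Dirichlet series at all. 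So the claim $\sigma^{n-1}\in\Hi\subset\Hp$ — i.e.\ that these numbers really are eigenvalues of the operator acting on $\Hp$ — is not established. In case (b) you yourself flag the analogous membership claim ($\tau$ a bounded Dirichlet series, $n^{-\sigma}\in\Hi$) as the main obstacle and leave it open; it suffers from the same lack of uniformity near $\Real s=0$, where $\Real\psi$ can be arbitrarily small and the onset of the geometric decay of $\psi(\varphi^{[j]}(s))-c_1$ is not uniform in $s$, so boundedness of $\tau$ on $\C_0$ is not clear.

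The paper closes exactly this gap by never constructing eigenfunctions. For $c_0=0$ it observes that the span of $\delta_\alpha,\delta_\alpha',\dots,\delta_\alpha^{(m)}$ in $(\Hp)^{\ast}$ is invariant under $C_\varphi^{\ast}$, with upper-triangular matrix whose diagonal is $[\varphi'(\alpha)]^{n-1}$, so these numbers lie in $\sigma(C_\varphi^{\ast})=\sigma(C_\varphi)$; for $c_0=1$ it uses that the closed span of $j^{-s}$, $j>m$, is $C_\varphi$-invariant and that the induced action on the span of $1,2^{-s},\dots,m^{-s}$ is triangular with diagonal $j^{-c_1}$. Compactness (Riesz theory) then guarantees that each nonzero spectral point is an eigenvalue with an eigenfunction in $\Hp$, while the reverse inclusion and multiplicity one follow from K\"onig's theorem in case (a) and from Bayart's $\Ht$ argument, which transfers verbatim, in case (b). If you want to keep your constructive route you must either prove membership of the linearizers in $\Hp$ under compactness alone (delicate, and false in the bounded-$\Hi$ form you use), or replace the construction by such a duality/triangularization argument.
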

 \begin{proof}  We adapt the proof of \cite{BAYA} to $\Hp$. To begin with, we set $E:=\big\{[\varphi'(\alpha)]^{n-1}, \ n=1,2,\ldots\big\}$ and let $\sigma(C_\varphi)$  be the spectrum of $C_\varphi$. For a given integer $m\geq 1$, consider the space $\mathcal{K}_m$ defined by 
 $$\mathcal{K}_m=\hbox{\ span}\ (\delta_\alpha, \delta'_{\alpha}, \cdots,\delta^{(m)}_{\alpha}) \subset (\Hp)^{\ast},$$
 where by definition $\delta^{(k)}_{\alpha}(f)=f^{(k)}(\alpha)$. This space is invariant under $C_{\varphi}^{\ast}$ since $\varphi(\alpha)=\alpha$. It is also finite-dimensional, and the matrix of $(C_{\varphi}^{\ast})_{\mid \mathcal{K}_m}$ on the natural basis of $\mathcal{K}_m$ is upper triangular with diagonal elements $[\varphi'(\alpha)]^{n-1},\ 1\leq n\leq m+1$. This shows that $E\subset \sigma(C_\varphi^{\ast})=\sigma(C_\varphi)$.  
  For the reverse inclusion, we use the compactness of $C_\varphi$ and  
K\"onigs's theorem \cite[pp. 90--91]{Shap-livre} which in particular claims that $f\circ\varphi=\lambda\,f$ with $f$ a non-zero analytic  function in $\C_{1/2}$ implies that $\lambda=[\varphi'(\alpha)]^k$ for some non-negative integer $k$ and that the corresponding eigenfunctions  generate a one-dimensional space of $\Hp$.
 
 As for (b), set $E=\{n^{-c_1},\ n\geq 1\}$. The proof of \cite{BAYA} still gives $\ \sigma(C_\varphi)\subset E\cup\{0\}$  for $\Hp$. Moreover, for a fixed integer $m$, the  vector spaces $\mathcal{K}_m$ and $\mathcal{L}_m$ respectively generated by $1,2^{-s},\ldots, m^{-s}$ and $j^{-s},\ j>m$ are complementary, $\mathcal{L}_m$ is stable by $C_\varphi$, $\mathcal{K}_m$  is finite-dimensional, and the matrix of   ${C_\varphi}_{|\mathcal{K}_m}$  is lower triangular with diagonal elements $j^{-c_1}, 1\leq j\leq m$. Therefore, $E\subset \sigma(C_\varphi)$ as in  \cite{BAYA}. 
    \end{proof} 
The result of \cite{BAYA} stating that $\sigma(C_\varphi)=\{0,1\}$ when $c_0\geq 2$ also extends from $\Ht$ to $\Hp$, but this case will not be needed in this work and is omitted here.

\subsection{Vertical translates}

$\mathbb T^\infty$ may be identified with the dual group of $\mathbb Q_+$, where $\mathbb Q_+$ denotes the multiplicative discrete group of strictly positive rational numbers: given a point $z=(z_j)$ on $T^\infty$, we define a character $\chi$ on $\mathbb Q_+$ by its values at the primes by setting
$$\chi(2)=z_1,\ \chi(3)=z_2,\ \dots,\ \chi(p_m)=z_m,\ \dots$$
and by extending the definition multiplicatively. In the sequel, we will associate the character $\chi$ with the point $(z_j)$ and refer to $\chi$ as well as a point on $\T^\infty$. In particular, we will be interested in properties that hold for almost all characters $\chi$ with respect to the Haar measure $m_\infty$ on $\T^\infty$.

%Characters are connected with vertical limit functions of $\Hp$. Indeed, fix $f(s)=\sum_1^{+\infty}a_nn^{-s}$ any element of $\Hp$. The vertical translates of $f$ are the functions $f_\tau(s)=f(s+i\tau)$. To every sequence $(\tau_n)$ of translations there exists a subsequence, say $(\tau_{n(k)})$, such that $f_{\tau_{n(k)}}$ converges uniformly on compact subsets of the domain $\mathbb C_{1/2}$ to a limit function, say $\tilde{f}(s)$. We will say that $\tilde{f}$ is a vertical limit function of $f$. In \cite{BAY}, it is proved the following 
%\begin{lemma}
%The vertical limit functions of the function $f(s)=\sum_1^{+\infty}a_nn^{-s}$ coincide with the functions of the form
%$$f_\chi(s)=\sum_1^{+\infty}a_n\chi(n)n^{-s},$$
%$\chi$ being a character.
%\end{lemma}

In \cite{HLS} and \cite{BAY}, it was explained that it is useful to consider $f_\chi$ to get a more profound understanding of the function theoretic properties of $f$. For example, for almost all characters $\chi$ , the function $f_\chi$ can be extended to $\mathbb C_0$. Moreover, we obtain another way to compute the norm of $f$ (see \cite[theorem 4.1]{HLS} or \cite[lemma 5]{BAY}):
\begin{lemma}\label{LEMCALCULNORME}
Let $\mu$ be a finite Borel measure on $\mathbb R$. Then :
$${\|f\|_{{\Hp}}^{p}\,\mu(\mathbb R)}=\int_{\mathbb T^\infty}\int_{\mathbb R} |f_{\chi}(it)|^pd\mu(t)dm_\infty(\chi).$$
\end{lemma}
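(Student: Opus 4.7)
The plan is to reduce the identity to translation invariance of Haar measure on $\mathbb T^\infty$, after identifying $f$ with its Bohr lift.

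First I would recall the Bohr correspondence: an element $f(s)=\sum_{n=1}^\infty b_n n^{-s}\in\Hp$ is isometrically identified with $F\in H^p(\mathbb T^\infty)$ via $F(z)=\sum_n b_n z^{\kappa(n)}$, where $\kappa(n)=(\alpha_1,\alpha_2,\dots)$ is the exponent vector in the prime factorization $n=\prod p_j^{\alpha_j}$. In particular $\|f\|_{\Hp}^p=\int_{\mathbb T^\infty}|F^\ast|^p\,dm_\infty$. For $\chi\in\mathbb T^\infty$, viewed as a character of $\mathbb Q_+$, the vertical translate is $f_\chi(s)=\sum_n b_n\chi(n)n^{-s}$, so its Bohr lift is $F(\chi\,\cdot\,)$. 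Setting $z(t)=(p_1^{-it},p_2^{-it},\dots)\in\mathbb T^\infty$, one has the key pointwise identity
\begin{equation}\label{keyid}
f_\chi(it)=F^\ast(\chi\cdot z(t))
\end{equation}
for $m_\infty\otimes dt$-a.e.\ $(\chi,t)$, where the product $\chi\cdot z(t)$ is the coordinatewise product in $\mathbb T^\infty$.

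Granting \eqref{keyid}, I would then conclude as follows. By Tonelli's theorem (the integrand is non-negative and jointly measurable),
\[
\int_{\mathbb T^\infty}\!\!\int_{\mathbb R}|f_\chi(it)|^p\,d\mu(t)\,dm_\infty(\chi)=\int_{\mathbb R}\!\left(\int_{\mathbb T^\infty}|F^\ast(\chi\cdot z(t))|^p\,dm_\infty(\chi)\right)d\mu(t).
\]
For each fixed $t\in\mathbb R$, translation invariance of the Haar measure $m_\infty$ under the rotation $\chi\mapsto \chi\cdot z(t)$ yields
\[
\int_{\mathbb T^\infty}|F^\ast(\chi\cdot z(t))|^p\,dm_\infty(\chi)=\int_{\mathbb T^\infty}|F^\ast(\chi)|^p\,dm_\infty(\chi)=\|f\|_{\Hp}^p,
\]
and integrating against $d\mu$ over $\mathbb R$ then produces the stated $\|f\|_{\Hp}^p\,\mu(\mathbb R)$.

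The main obstacle is establishing \eqref{keyid} rigorously together with the requisite joint measurability, since $F^\ast$ is only defined $m_\infty$-a.e.\ on $\mathbb T^\infty$. I would handle this first for a Dirichlet polynomial $P$, where \eqref{keyid} holds everywhere by direct computation, and then pass to the limit by density: choose polynomials $P_N$ with $\|f-P_N\|_{\Hp}\to 0$, observe that for each fixed $t$ the map $F\mapsto F(\,\cdot\,z(t))$ is an $L^p(\mathbb T^\infty)$-isometry, deduce that $(P_N)_\chi\to f_\chi$ in $\mathbb T^\infty\otimes\mathbb R$ measure (after passing to a subsequence, pointwise a.e.), and upgrade via Fatou. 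Equivalently, one may invoke the standard fact from \cite{HLS} that for $m_\infty$-a.e.\ $\chi$ the series defining $f_\chi$ converges in $\mathbb C_0$ and its boundary values $f_\chi(it)$ coincide a.e.\ with $F^\ast(\chi\cdot z(t))$, which is precisely what \eqref{keyid} requires.
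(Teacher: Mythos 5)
Your argument is the standard one, and in fact the paper offers no proof of this lemma at all: it is quoted directly from \cite[Theorem 4.1]{HLS} and \cite[Lemma 5]{BAY}, whose proofs run exactly along your lines (Bohr lift, the Kronecker flow $z(t)=(p_j^{-it})$, rotation invariance of $m_\infty$, Fubini--Tonelli). So in substance you have reconstructed the argument behind the citation, and the reduction to translation invariance of Haar measure is the right mechanism.

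One point deserves more care, because the lemma is stated for an \emph{arbitrary} finite Borel measure $\mu$ on $\mathbb R$. Your key identity $f_\chi(it)=F^\ast(\chi\cdot z(t))$ is asserted only for $m_\infty\otimes dt$-almost every $(\chi,t)$, with $dt$ Lebesgue measure; if $\mu$ has a singular part (say $\mu=\delta_{t_0}$), a $(m_\infty\otimes dt)$-null exceptional set can still carry full $\mu\otimes m_\infty$-mass, and the Tonelli step then breaks down (indeed, even the meaning of $f_\chi(it_0)$ for a fixed $t_0$ must be settled). The repair is short but should be made explicit: from the joint a.e.\ statement and Fubini, pick a single $t_1$ such that for $m_\infty$-a.e.\ $\chi$ the boundary value $f_\chi(it_1)$ exists and equals $F^\ast(\chi\cdot z(t_1))$; since $\chi\mapsto\chi\cdot\overline{z(t_1)}\cdot z(t)$ preserves $m_\infty$ and vertical translation by $t-t_1$ corresponds to rotating the character, one deduces that for \emph{every fixed} $t$ the identity $f_\chi(it)=F^\ast(\chi\cdot z(t))$ holds for $m_\infty$-a.e.\ $\chi$. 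With this ``fixed $t$, a.e.\ $\chi$'' version, your computation — inner integral constant equal to $\|f\|_{\Hp}^p$ by rotation invariance, then integration against $d\mu$ — goes through for every finite Borel $\mu$, which is the full strength claimed (even though the paper only ever applies the lemma with absolutely continuous measures such as Poisson kernels). Alternatively, citing \cite[Lemma 5]{BAY} at that point, as you suggest, closes the gap, since that lemma is precisely the statement being proved.
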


We shall need to extend our notation of vertical translates to the class of functions of the form $\varphi(s)=c_0s+\psi(s)\in\mathcal G$. For such functions, $\varphi_\chi$ will mean 
$$\varphi_\chi(s)=c_0s+\psi_\chi(s).$$
%It should be pointed out that we cannot interpret $\varphi_\chi$ as a vertical limit function of $\varphi$ in this case : $\varphi_\chi$ is a vertical limit of functions $\varphi_\tau(s)=c_0s+\psi(s+i\tau)$. 
The connection between the composition operators $C_{\varphi}$ and $C_{\varphi_\chi}$ is clarified in \cite{GORHED}, where it is proved that  for every $f$ in $\Hp$ and every $\chi$ in $\T^\infty$, 
$$(f\circ\varphi)_\chi(s)=f_{\chi^{c_0}}\circ\varphi_\chi(s),\ s\in\mathbb C_{1/2},$$
where $\chi^{c_0}$ is the character taking the value $[\chi (n)]^{c_0}$ at $n$.
Moreover, for almost all $\chi$ in $\mathbb T^\infty$, this relation remains true in $ \mathbb C_0$.

\section{Carleson measures and sequences, and interpolating sequences}
%%%%%%%%%
\subsection{The case $p<\infty$}

 We will denote by $\delta_s$ the functional of point evaluation on ${\Hp}$ at the point $s=\sigma+it$ in $\C_{1/2}$, so that  $\delta_{s}(f)=f(s)$. The norm of $\delta_s$ was computed by Cole and Gamelin \cite{COGA} (see also \cite{BAY}): %We have (observe that $\Vert\delta_a\Vert\geq 1$ since $
 %\Vert 1\Vert_{{\Hp}}=1$)
\begin{equation}\label{norm} \Vert \delta_{s}\Vert=[\zeta(2\sigma)]^{1/p}.\end{equation}
It should be noted that $\zeta(2\sigma)\approx (2\sigma-1)^{-1}$ when $s$ is restricted to a set on which  $\sigma$ is uniformly bounded. 
%In the sequel, we will often take the liberty of replacing  $\Vert \delta_s\Vert^{-p}$ by $2\sigma-1$ without comment, when it is clear from the %context that $\sigma \leq K$ for some $K$.

Let now  $\mu$ be a nonnegative Borel measure on the half-plane $\C_{1/2}$. We say that $\mu$ is a Carleson measure for $\Hp$  if 
there exists a positive constant $C$ such that
\[ \int_{\C_{1/2}} |f(s)|^{p} d\mu(s) \le C \| f\|_{\Hp}^p \]
holds for every $f$ in $\Hp$. The smallest possible $C$ in this inequality is called the $\Hp$ Carleson norm of $\mu$. We denote it by 
$\|\mu\|_{\mathcal{C}, \Hp}$  and declare that $\|\mu\|_{\mathcal{C}, \Hp}=\infty$ if $\mu$ fails to be a Carleson measure for $\Hp$. 
  Let next $S=(s_j)$ be a  sequence of distinct points of $\C_{1/2}$. We say that this sequence is a Carleson sequence if
the discrete measure  
$$\mu_S=\sum_{j} \frac{\delta_{s_j}}{\Vert \delta_{s_j}\Vert^{p}}$$ is a Carleson measure for ${\Hp}$ in the above sense. The Carleson $\Hp$ norm $\|\mu_S\|_{\mathcal{C}, \Hp}$ is called the Carleson $\Hp$ constant of $S$. 

The will need the following estimate. % in the specific applications of the Theorems \ref{belowc2} and \ref{belowc2bis} to follow.

\begin{lemma}\label{firstlem}
Let $\mu$ be a nonnegative Borel measure in the half-plane $\C_{1/2}$ whose support is contained in $\overline{\C_{\theta}}$ for some $\theta>1/2$.
Then 
\[ \|\mu\|_{\mathcal{C}, \Hp} \le \begin{cases} [\zeta(2\theta)]^{(p-2)/p} \|\mu\|_{\mathcal{C}, \Ht} , & p\ge 2 \\
 [\zeta(2\theta)] \|\mu\|, & 1\leq p<2. \end{cases} \]
\end{lemma}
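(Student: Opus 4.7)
The plan is to prove the two cases separately, both relying on the Cole--Gamelin pointwise bound $|f(s)|\le[\zeta(2\sigma)]^{1/p}\,\|f\|_{\Hp}$ from \eqref{norm}, combined with the fact that $\zeta$ is decreasing on $(1,\infty)$, so that for every $s$ in the support of $\mu$ one has
\[
|f(s)|\le[\zeta(2\theta)]^{1/p}\,\|f\|_{\Hp}.
\]

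For the easy range $1\le p<2$, I would simply bound $|f|^p$ by its pointwise supremum over $\overline{\C_\theta}$ and integrate:
\[
\int_{\C_{1/2}}|f(s)|^{p}\,d\mu(s)\le \Big(\sup_{s\in\overline{\C_\theta}}|f(s)|\Big)^{p}\mu(\C_{1/2})\le \zeta(2\theta)\,\|\mu\|\,\|f\|_{\Hp}^{p},
\]
which is exactly the claimed inequality.

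For $p\ge 2$, the idea is to split $|f|^{p}=|f|^{p-2}\cdot|f|^{2}$, control the first factor pointwise using the Cole--Gamelin bound, and control the integral of the second factor by the $\Ht$-Carleson norm. This gives
\[
\int|f|^{p}\,d\mu\le \big([\zeta(2\theta)]^{1/p}\|f\|_{\Hp}\big)^{p-2}\int|f|^{2}\,d\mu\le [\zeta(2\theta)]^{(p-2)/p}\,\|f\|_{\Hp}^{p-2}\,\|\mu\|_{\mathcal C,\Ht}\,\|f\|_{\Ht}^{2}.
\]
To close the argument I need the inclusion $\Hp\hookrightarrow\Ht$ for $p\ge 2$, which follows from the Bohr lift identification $\Hp=H^{p}(\T^\infty)\subset L^{p}(\T^\infty)$ together with Jensen's inequality applied to the probability measure $m_\infty$: for $p\ge 2$,
\[
\|f\|_{\Ht}=\|f\|_{L^{2}(\T^\infty)}\le \|f\|_{L^{p}(\T^\infty)}=\|f\|_{\Hp}.
\]
Substituting this estimate yields the desired bound $[\zeta(2\theta)]^{(p-2)/p}\,\|\mu\|_{\mathcal C,\Ht}\,\|f\|_{\Hp}^{p}$.

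There is essentially no technical obstacle here; the proof is a direct two-line interpolation-type argument. The only point worth checking is that the pointwise evaluation estimate is uniform on the support of $\mu$, which is guaranteed by the assumption that $\mathrm{supp}\,\mu\subset\overline{\C_\theta}$ with $\theta>1/2$ (so that $\zeta(2\sigma)\le\zeta(2\theta)<\infty$ throughout the support). The mild subtlety is conceptual rather than technical: in the range $p\ge 2$ one wants a bound involving $\|\mu\|_{\mathcal C,\Ht}$ (finer than mass), while for $p<2$ the total mass suffices, reflecting the fact that $\Hp\not\subset\Ht$ when $p<2$, so the cruder pointwise-sup argument is the only option.
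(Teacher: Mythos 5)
Your proof is correct and follows essentially the same route as the paper: the crude pointwise-supremum bound for $1\le p<2$, and the factorization $|f|^p=|f|^{p-2}|f|^2$ combined with the Cole--Gamelin pointwise estimate and the contractive inclusion $\Hp\hookrightarrow\Ht$ for $p\ge 2$. The explicit Bohr lift plus Jensen justification for $\|f\|_{\Ht}\le\|f\|_{\Hp}$ is a detail the paper leaves implicit, but the argument is the same.
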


\begin{proof}
For an arbitrary point $s=\sigma+it$ in the support of $\mu$, we have
\[ |f(s)|\le [\zeta(2\sigma)]^{1/p} \| f\|_{\Hp}\le [\zeta(2\theta)]^{1/p} \| f\|_{\Hp}. \]
We infer from this that
\[ \int_{\C_{1/2}} |f(s)|^{p} d\mu(s)=\int_{\C_{1/2}} |f(s)|^{p-2}|f(s)|^2 d\mu(s) \le [\zeta(2\theta)]^{(p-2)/p} \| f\|_{\Hp}^{p-2} \int_{\C_{1/2}} |f(s)|^{2} d\mu(s). \]
Now the result follows since $\| f\|_{\Ht} \le \| f\|_{{\Hp}}$ when $p\geq 2$. The (poor) estimate in the case $p<2$ is obvious. \end{proof}

The $\Hp$ constant of interpolation $M_{{\Hp}}(S)$ is defined as  the infimum of the constants $K$ with the following property: for every  sequence $(a_j)$ of complex numbers such that 
\[ \sum_{j}\vert a_j\vert^p \Vert \delta_{s_j}\Vert^{-p}<\infty, \]  there exists a function $f$ in $\Hp$ such that
$$f(s_j)=a_j \ \ \text{for all $ j$ and}\  \ \Vert f\Vert_p\leq K\left(\sum_{j}\vert a_j\vert^p \Vert \delta_{s_j}\Vert^{-p}\right)^{1/p}.$$
% Turning to the case $p=\infty$ and remembering that ${\Hi}$ is (isometrically) the multiplier set of $\Ht$ (\cite{HLS}) and as well that of ${\Hp}$ %(\cite{BAY})
For a sequence $S$ of distinct points in $\mathbb C_0$, we will denote by $M_{{\Hi}}(S) $ the best constant $K$ such that, for every bounded sequence $(a_j)$ of complex numbers, there exists a function $f$ in $\Hi$ such that 
$$f(s_j)=a_j \ \ \text{ for all $ j$ and}\ \ \Vert f\Vert_\infty\leq K\,\sup_{j}\vert a_j\vert .$$   %If $\mu$ is a positive and finite measure on $\C_{1/2}$, we denote by $\Vert \mu\Vert_{\mathcal{C},{\Hp}}\leq \infty$ the $p$th-power of the canonical embedding of ${\Hp}$ into $L^{p}(\mu)$. We abbreviate to $\Vert \mu\Vert_{\mathcal{C}}$ in the case $p=2$. The following comparison lemmas will be quite useful.

In the next lemma, we will use the following notation. Given a sequence $S=(\sigma_j+i t_j)$ and a real number $\theta$, we write $S+\theta := (\sigma_j+\theta+i t_j)$.

%\begin{lemma}\label{secondlem}
%Suppose that $\theta>1/2$ and that $S=(s_j=\sigma_j+it_j)_{j=1}^n$ is a finite sequence in the half-plane $\C_{1/2+\theta}$. Then  %strip \[1/2+\theta\le \sigma \le 1/2+K .\] Then
%\[ M_{\Hp} (S) \le K_{\theta}\, n^{1/2-1/p}  M_{\Ht}(S+\theta)\]
%for $p\geq 2$. And if moreover $\sigma_j\leq K$, then \[ M_{{\Hi}} (S) \le K'_{\theta}\,n^{1/2}  M_{\Ht}(S+\theta)\] where $K'_\theta=(K+\theta)^{1/2} \frac{\lambda}{\sqrt{2\theta -1}}$ and $\lambda$ is an absolute constant. 
%\end{lemma}

\begin{lemma}\label{secondlem}
Suppose that $\theta>1/2,\ \delta>0$ and that $S=(s_j=\sigma_j+it_j)_{j=1}^n$ is a finite sequence in the half-plane $\C_{1/2+\delta}$. Then  %strip \[1/2+\theta\le \sigma \le 1/2+K .\] Then
\[
M_{\Hp} (S) \le  [\zeta(2\theta)]^{1/\min(2,p)}\left(\frac{\zeta(1+2\delta)}{\zeta(1+2(\delta+\theta))}\right)^{1/\min(2,p)}\, n^{1/\min(2,p)-1/p}   \big(M_{\Ht}(S+\theta)\big)^{2/\min(p,2)}
\]
for $1\le p \le \infty$. 
%where $K_{\theta,\delta}=C \sqrt{\theta/\delta +1}\,(2\theta-1)^{-1/2}$ and $C$ is an absolute constant. %$\lambda$ being an absolute constant. 
%Moreover, if $\sigma_j\leq K$, then \[ M_{{\Hi}} (S) \le K_{\theta,\delta} K^{1/p} \,n^{1/2}  M_{\Ht}(S+\theta).\]  
\end{lemma}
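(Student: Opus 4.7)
The plan is to reduce the $\Hp$ interpolation problem at $S$ to an $\Ht$ interpolation problem at the shifted sequence $S+\theta$, exploiting the much better understood Hilbert space case. The return from $\Ht$ back to $\Hp$ proceeds by one of two routes according as $p\ge 2$ or $p\le 2$, which is exactly what accounts for the appearance of $\min(p,2)$ in the statement. In both cases the $\zeta$-quotient factor will come from the monotonicity
\[\frac{\zeta(2\sigma_j)}{\zeta(2(\sigma_j+\theta))}\le\frac{\zeta(1+2\delta)}{\zeta(1+2(\delta+\theta))}\qquad(2\sigma_j\ge 1+2\delta),\]
which follows from the convexity of $\log\zeta$ on $(1,\infty)$, itself a direct consequence of the Euler product.

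For $p\ge 2$, given targets $(a_j)$ I would first apply the $\Ht$ interpolation on $S+\theta$ to produce $g\in\Ht$ with $g(s_j+\theta)=a_j$ and $\|g\|_{\Ht}\le M_{\Ht}(S+\theta)\bigl(\sum_j |a_j|^2/\zeta(2(\sigma_j+\theta))\bigr)^{1/2}$. Setting $f(s):=g(s+\theta)$ gives $f(s_j)=a_j$, and the Cole--Gamelin bound \eqref{norm} applied to $g$ yields
\[|f(s)|=|g(s+\theta)|\le[\zeta(2(\Real s+\theta))]^{1/2}\|g\|_{\Ht}\le[\zeta(2\theta)]^{1/2}\|g\|_{\Ht}\qquad(\Real s>0),\]
so $f\in\Hi\subset\Hp$ with $\|f\|_{\Hp}\le[\zeta(2\theta)]^{1/2}\|g\|_{\Ht}$. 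The remaining step is to convert the $\ell^2$-weighted data norm into the $\ell^p$-weighted one appearing in the definition of $M_{\Hp}(S)$: writing $b_j:=|a_j|^p/\zeta(2\sigma_j)$ and applying Jensen's inequality to the concave map $t\mapsto t^{2/p}$ (valid since $p\ge 2$) produces $\sum_j b_j^{2/p}\le n^{1-2/p}(\sum_j b_j)^{2/p}$, which combined with the $\zeta$-quotient estimate above gives exactly the claimed bound, including the $n^{1/2-1/p}$ factor.

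For $p\le 2$ of the form $p=2/k$ with $k\ge 1$ a positive integer, I would instead use a power trick: solve the $\Ht$ interpolation on $S+\theta$ with targets $\omega_j$ satisfying $\omega_j^k=a_j$ (any choice of $k$-th roots; in particular $|\omega_j|^2=|a_j|^p$), and set $f(s):=g(s+\theta)^k$. Then $f(s_j)=a_j$, and via the Bohr-lift identification $\Hp=H^p(\T^\infty)$ one has $|f^\ast|^p=|g^\ast(\cdot+\theta)|^{pk}=|g^\ast(\cdot+\theta)|^2$ on $\T^\infty$, so
\[\|f\|_{\Hp}^p=\|g(\cdot+\theta)\|_{\Ht}^2\le\|g\|_{\Ht}^2.\]
Hence $\|f\|_{\Hp}\le\|g\|_{\Ht}^{2/p}$, which is what supplies the exponent $2/\min(p,2)=2/p$ on $M_{\Ht}(S+\theta)$; the $\zeta$-quotient estimate, now applied with exponent $1/p$, produces the weight factor, and there is no $n$-factor because $1/\min(2,p)-1/p=0$ in this range.

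The main obstacle I anticipate is extending the power trick to $p\in[1,2]$ for which $2/p$ is not a positive integer: taking a fractional power of $g(s+\theta)$ requires a global branch on $\C_0$, a delicate matter on the infinite polydisc where inner--outer factorization is not available in its classical $\D$ form. One resolution is to choose $k=\lceil 2/p\rceil$ and use the embedding $\|\cdot\|_{\Hp}\le\|\cdot\|_{\mathscr{H}^{2/k}}$ valid for $p\le 2/k$, absorbing the mismatch into the constants through a weighted $\ell^p$-to-$\ell^{2/k}$ comparison; alternatively, a Stein-type analytic interpolation between the clean integer cases $2/p\in\mathbb N$ should smooth over the difficulty and produce the uniform statement as written.
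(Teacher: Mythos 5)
Your treatment of the case $p\ge 2$ coincides with the paper's: shift the nodes, solve in $\Ht$, shift back, and control the resulting $\Hi$ norm by the Cole--Gamelin pointwise bound, converting the $\ell^2$-weighted data norm into the $\ell^p$-weighted one by H\"older (your "Jensen on the concave power" step). The minor mismatch in the weight exponent in your $b_j^{2/p}$ computation is harmless because $\zeta\ge 1$, and the conclusion is the same.

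For $p<2$ you depart from the paper and the gap is real. The power trick $f=g(\cdot+\theta)^k$ with $g(s_j+\theta)=a_j^{1/k}$ produces exactly the exponent $2/p$ on $M_{\Ht}(S+\theta)$, but only when $k=2/p$ is a positive integer, so only for $p\in\{2,1,2/3,\dots\}$; you flag this yourself, but neither of your two proposed repairs goes through. Choosing $k=\lceil 2/p\rceil$ gives $2/k\le p$, so the contractive embedding runs $\Vert\cdot\Vert_{\mathscr H^{2/k}}\le\Vert\cdot\Vert_{\Hp}$, the reverse of what you wrote; and if instead you take $k=\lfloor 2/p\rfloor$ so that $\Hp\hookleftarrow\mathscr H^{2/k}$ points the right way, for $1<p<2$ you are forced to $k=1$ and the exponent on $M_{\Ht}(S+\theta)$ degenerates to $1$ rather than $2/p$. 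The Stein interpolation route is unavailable precisely because $\Hp$ is not complemented in $L^p(\T^\infty)$; as the paper stresses in the introduction, this is the structural obstacle that rules out Riesz--Thorin/Lions--Peetre arguments here.

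The paper's mechanism for $1\le p<2$ is a short bootstrap rather than a power trick: starting from the $\Ht$ bound $\Vert F\Vert_{\Ht}\le(\zeta\text{-quotient})^{1/2}M_{\Ht}(S+\theta)\big(\sum_j|a_j|^2[\zeta(2\sigma_j)]^{-1}\big)^{1/2}$, write $|a_j|^2=|a_j|^{2-p}|a_j|^p$ and feed in $|a_j|\le[\zeta(2\theta)]^{1/2}\Vert F\Vert_{\Ht}$ (Cole--Gamelin at the shifted nodes). This makes $\Vert F\Vert_{\Ht}^{1-p/2}$ reappear on the right-hand side, and solving for $\Vert F\Vert_{\Ht}$ (i.e.\ dividing and raising to the power $2/p$) is exactly what produces the exponent $2/p$ on $M_{\Ht}(S+\theta)$ and the factor $[\zeta(2\theta)]^{1/p}$ after the final shift $G=F(\cdot+\theta)$ and the estimate $\Vert G\Vert_{\Hp}\le\Vert G\Vert_{\Hi}\le[\zeta(2\theta)]^{1/2}\Vert F\Vert_{\Ht}$. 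This self-referential step is what you are missing; it is elementary but is precisely the idea that extends the bound from the lattice $2/p\in\mathbb N$ to all $p\in[1,2)$.
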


\begin{proof}
Given a sequence $(a_j)_{j=1}^n$, we find the minimal norm solution $F$ to the interpolation problem $F(s_j+\theta)=a_j$ in $\Ht$. From the definition of the constant of interpolation and \eqref{norm}, we get the
basic estimate
\begin{equation} \label{base} \| F \|_{\Ht} \le M_{\Ht}(S+\theta) \left(\sum_{j=1}^n |a_j|^2 [\zeta(2(\sigma_j+\theta))]^{-1}\right)^{1/2}. \end{equation}
By our restriction on $S$ and the fact that $\zeta'/\zeta$ increases on $(1,\infty)$, we have 
\[ \frac{\zeta(2\sigma_j)}{\zeta(2(\sigma_j+\theta))}\le \frac{\zeta(1+2\delta)}{\zeta(1+2(\delta+\theta))}, \]
which gives
\[ \| F \|_{\Ht} \le  \left(\frac{\zeta(1+2\delta)}{\zeta(1+2(\delta+\theta))}\right)^{1/2} \, M_{\Ht}(S+\theta) 
\left(\sum_{j=1}^n |a_j|^2 [\zeta(2\sigma_j)]^{-1}\right)^{1/2} \]
when inserted into \eqref{base}. 
We first assume $1\le p<2$. Using \eqref{norm}  which gives $|a_j|\leq \sqrt{\zeta(2\theta)}\Vert F\Vert_{2}$, we then get
\begin{eqnarray*}
 \| F \|_{\Ht} &\le& \left(\frac{\zeta(1+2\delta)}{\zeta(1+2(\delta+\theta))}\right)^{1/2} M_{\Ht}(S+\theta) [\zeta(2\theta)]^{1/2-p/4} \| F\|_{\Ht}^{1-p/2}\\
&&\quad\times  \left(\sum_{j=1}^n |a_j|^p [\zeta(2\sigma_j)]^{-1}\right)^{1/2}.
\end{eqnarray*}
This yields
\begin{eqnarray*}
 \| F \|_{\Ht} &\le& \left(\frac{\zeta(1+2\delta)}{\zeta(1+2(\delta+\theta))}\right)^{1/p} \big(M_{\Ht}(S+\theta) \big)^{2/p} [\zeta(2\theta)]^{1/p-1/2} \\
&&\quad\times  \left(\sum_{j=1}^n |a_j|^p\|\delta_{s_j}\|^{-p}\right)^{1/p}.
\end{eqnarray*}
When $2\le p<+\infty$, we simply use H\"{o}lder's inequality to obtain
\begin{eqnarray*}
 \| F \|_{\Ht} &\le& \left(\frac{\zeta(1+2\delta)}{\zeta(1+2(\delta+\theta))}\right)^{1/2} \, M_{\Ht}(S+\theta) n^{1/2-1/p}\\ &&\quad\times\left(\sum_{j=1}^n |a_j|^p\|\delta_{s_j}\|^{-p}\right)^{1/p}.
\end{eqnarray*}
For $p=\infty$, we get 
\[ \| F \|_{\Ht}\le M_{\Ht}(S+\theta) n^{1/2} \sup_j |a_j|.\]
We now observe that the shifted function $G(s)=F(s+\theta)$ is in $\Hi$ since $\theta>1/2$, and that $G(s_j)=a_j$ as well.
Hence
$$\Vert G\Vert_{{\Hp}}\leq \Vert G\Vert_{{\Hi}}\leq\sqrt{\zeta(2\theta)} \Vert F\Vert_{\Ht}.$$

%(\sum_{n\geq 1} n^{-2\theta})^{1/2}\ll (2\theta-1)^{-1/2}\Vert F\Vert_{\Ht}.
%The case $p=\infty$ is then obvious.} %Indeed, one has by Cauchy-Schwarz:
%\[\Vert G\Vert_{{\Hi}}\leq\Vert F\Vert_{\Ht}(\sum_{n\geq 1} n^{-2\theta})^{1/2}\ll \sqrt n\, (K+\theta)^{1/2}\,\sup|a_j| M_{\Ht}(S+\theta)\,(2\theta-1)^{-1/2}. \]
 \end{proof}

The preceding lemma is useful because we have good estimates for constants of interpolation in the $\Ht$ setting, thanks to the following key result from \cite{QS1}. Here we use the notation $S_R$ for the subsequence of points $s_j$ from $S$ that satisfy $|\Imag s_j|\leq R$.

\begin{lemma}\label{secondlem2}
Suppose $S=(s_j=\sigma_j+it_j)$ is an interpolating sequence for $H^2(\C_{1/2})$ and that there exists a number $\theta>1/2$ such that $1/2<\sigma_j \le \theta$ for every $j$. Then there exists a constant $C$,  depending on $\theta$, 
such that
\begin{equation}\label{above} M_{\Ht} (S_R) \le  C [M_{H^2(\C_{1/2})}(S)]^{2\theta+6} R^{2\theta+7/2} \end{equation}
whenever $R\ge \theta+1$. 
\end{lemma}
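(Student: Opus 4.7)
The plan is to bootstrap from the $H^2(\C_{1/2})$-interpolation hypothesis on $S$ to $\Ht$-interpolation on the truncated subsequence $S_R$, using a $\bar\partial$-correction on a vertical strip followed by a conversion to a Dirichlet series. The polynomial loss $R^{2\theta+7/2}$ should come from Hörmander's $L^2$-estimate on the strip $\{|\Im s|\le cR\}$ together with norm comparisons between $\Ht$ and $H^2(\C_{1/2})$ there, while the exponent $2\theta+6$ on $M_{H^2(\C_{1/2})}(S)$ should accumulate from iterating a correction scheme.

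I would first normalize the data so that $\sum_{j\in S_R}|a_j|^2/\zeta(2\sigma_j)=1$. Because $\zeta$ has a simple pole of residue $1$ at $s=1$ and $\sigma_j\in(1/2,\theta]$, the ratio $\zeta(2\sigma_j)(2\sigma_j-1)$ is bounded above and below by constants depending only on $\theta$; hence the data is also square-summable against the $H^2(\C_{1/2})$-point-evaluation weights, and the interpolation hypothesis produces $F\in H^2(\C_{1/2})$ with $F(s_j)=a_j$ for $j\in S_R$, $F(s_j)=0$ for $j\in S\setminus S_R$, and $\|F\|_{H^2(\C_{1/2})}\le C_\theta\,M_{H^2(\C_{1/2})}(S)$. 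Next I would cut $F$ off smoothly in the imaginary direction by $\chi_R$ equal to $1$ on $|\Im s|\le R$ and supported in $|\Im s|\le 2R$, and solve $\bar\partial u=F\,\bar\partial\chi_R$ on the strip $\{1/2<\Re s<\theta+1,\,|\Im s|<3R\}$ with a Hörmander weight that simultaneously enforces $u(s_j)=0$ for $j\in S_R$. The function $G:=\chi_R F-u$ then interpolates the data and is controlled in $L^2$ on the strip by $R^{O(\theta)}\|F\|_{H^2(\C_{1/2})}$.

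The last step is to convert $G$ into a genuine element of $\Ht$. Since $G$ is compactly supported in the imaginary direction, after shifting by $\theta$ its restriction to $\Re s=1/2+\theta$ lies in $L^2(\RR)$, and its Fourier spectrum can be approximated by finitely many frequencies of the form $\log n$ with $n\le e^{cR}$; this approximation, followed by the inverse Bohr lift, yields a Dirichlet polynomial in $\Ht$ whose values at the points of $S_R$ differ from $(a_j)$ by a controlled error that is killed by a further iteration of the same scheme, contributing the additional power of $M_{H^2(\C_{1/2})}(S)$. The main obstacle is making this conversion quantitatively sharp, since a general holomorphic function bounded on a strip does not admit a Dirichlet series representation, so one has to either work from the start with Dirichlet-polynomial dual bases adapted to the zeta reproducing kernel, or approximate $G$ quantitatively by such a basis. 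Tracking the constants carefully throughout, and balancing the Hörmander weight against the frequency-truncation error, is precisely what yields the exponents $2\theta+6$ and $2\theta+7/2$ in the statement; everything else is bookkeeping.
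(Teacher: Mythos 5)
Your overall scaffolding---solve the interpolation problem in $H^2(\C_{1/2})$ first, correct with a $\overline\partial$-equation on a region of height comparable to $R$ while dividing out the points of $S_R$, and then pass to a genuine Dirichlet series, iterating to remove the error---is indeed the strategy behind this lemma (the paper does not reprove it here; it quotes it from \cite{QS1}, where the proof is exactly such a $\overline\partial$-correction scheme going back to \cite{S}). But there is a genuine gap at the step you yourself flag as the main obstacle: the quantitative passage from a holomorphic function with $L^2$ control to an element of $\Ht$. In the actual argument this is done by a discretization lemma: one writes $f(s)=\int_{\log N}^\infty\varphi(\xi)e^{-(s-1/2)\xi}\,d\xi$ and replaces the continuous spectrum by nearby frequencies $\log n$, $n\ge N$, producing $F\in\Ht$ with $\|F\|_{\Ht}\ll\|\varphi\|_2$ and an error $\Phi=f-F$ obeying a pointwise bound of the form $|\Phi(s)|\ll|s-1/2|\,N^{-\sigma+1/2}(\log N)^{-\eta}\|\varphi\|_2$. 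This works only because the spacing $\log(n+1)-\log n\asymp 1/n$ is fine for \emph{large} $n$; that is precisely why the scheme multiplies by $E_N(s)=N^{-s+1/2}$ to push the spectrum up into $[\log N,\infty)$, and divides by the Blaschke product $B$ of $S_R$ before solving $\overline\partial u=\overline\partial(\Theta\Phi)/(BE_N)$, so that the correction stays global and preserves the interpolation. Your proposal goes the other way: you restrict $G$ to a strip and approximate its spectrum by finitely many frequencies $\log n$ with $n\le e^{cR}$. That cannot deliver what is needed: the low frequencies are exactly where $\{\log n\}$ is too sparse, and a function known and controlled only on a bounded-height strip does not determine an $\Ht$ element at all---membership in $\Ht$ is a global condition on $\C_{1/2}$, and the actual construction never leaves the half-plane.

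Moreover, ``a further iteration of the same scheme'' is not enough, and it is not where the exponents come from. The proof needs a full Neumann series: a first-step operator $\tilde T_N$ and an iterated operator $T_N$ with $\|\tilde T_N\|\ll\delta(S)^{-2}R^{3/2}(\log R)\,N^{\theta+1}$ and $\|T_N\|\ll\delta(S)^{-2}R^{3/2}(\log R)\,N^{-1}$, the crucial factor $N^{-1}$ coming from the pointwise error bound above, i.e.\ from the very Dirichlet-series approximation you leave unresolved. Convergence of the series forces the choice $N\asymp\delta(S)^{-2}R^{3/2+\varepsilon}$, and feeding this back into the bound for $\|\tilde T_N\|$ is what produces $R^{2\theta+7/2}$ and the power $2\theta+6$ of the interpolation constant, after converting the separation constant $\delta(S)$ into $M_{H^2(\C_{1/2})}(S)$---another point your sketch does not address, since a H\"ormander weight with singularities at the $s_j$ yields bounds in terms of separation, not in terms of $M_{H^2(\C_{1/2})}(S)$ directly. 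So while the architecture is right, the quantitative approximation lemma and the $E_N$/Blaschke/Neumann-series mechanism constitute the real content of the proof, and the specific route you propose to replace them would fail.
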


This result is based on \cite{S} and relies on quite involved estimates for solutions of the $\overline{\partial}$ equation. 
%We will return to such estimates in Section~\ref{interpolate} to obtain an alternate bound for $M_{\Hp} (S)$.

We mention finally the remarkable fact that, in general 
\begin{equation}\label{remarkable}
M_{\Ho}(S)\le [M_{\Ht}(S)]^2 
\end{equation}
whenever $S$ is an interpolating sequence for $\Ht$. This bound follows from the observation (see \cite{OLSA}) that we may solve $f(s_j)=a_j$ in $\Ho$ by first solving $g(s_j)=\sqrt{a_j}$ in $\Ht$ and then setting $f=g^2$. Inequality \eqref{remarkable} is the reason $\Ho$ stands out as a distinguished case in our context; for other values of $p$, we do not know how to obtain a nontrivial bound for $M_{\Hp}(S)$ when $\Real s_j \to 1/2$, and essentially nothing is known about the $\Hp$ interpolating sequences.

  %Since analytic self-maps of $\C_{1/2}$ contract the pseudo-hyperbolic distance $\rho$, defined here by the formula
%$$\rho(a,b)=\Big\vert\frac{a-b}{a+\overline{b}-1}\Big\vert$$

 %we clearly have 
%\begin{equation}\label{contract} \delta(S')\geq \delta(S).\end{equation}
 %We denote by $T$ the analytic map $:\C_0\to \D^2$, the bidisk, defined by$$T(s)=(2^{-s}, 3^{-s})$$ and by $\rho_2$ the Gleason distance in $\D^2$ defined by 
%$$\rho_{2}(a,b)=\max\left(\Big\vert\frac{a_1-b_1}{1-\overline{a_1}b_1}\Big\vert,\  \Big\vert\frac{a_2-b_2}{1-\overline{a_2}b_2}\Big\vert\right)\hbox{\ if}\ a=(a_1,a_2)\hbox{\ and}\  b=(b_1,b_2)\in \D^2.$$
\subsection{ The case $p=\infty$}
If $H^{\infty}$ denotes the  Banach space of bounded analytic functions on  $\C_0$ and if  $S=(s_j)$ is a sequence of points in $\C_0$, we define the interpolation constant $M_{H^\infty}(S)$ as the infimum of constants $C$ such that for any bounded sequence $(a_j)$, the interpolation problem   $a_j=f(s_j),\ j=1,2,\ldots,$   has a solution $f\in H^\infty$  such that $\Vert f\Vert_\infty\leq C \sup_{j}|a_j|$.
We will make use of  a result of the third-named author \cite{SE},  which can be rephrased as follows: 
\begin{theorem}\label{krisei} Let $S$ be a  subset of $\ \C_{0}$, bounded by $K$. Then, there exists a  positive constant  $\gamma$, depending only on $K$, such that 
%$$\rho_{2}(T(s_1), T(s_2))\geq \left(\rho (s_1,s_2)\right)^\beta$$ for all $s_1,s_2\in S$. And moreover  
\begin{equation}\label{tsf} M_{{\Hi}}(S)\ll \big[M_{H^\infty}(S)\big]^{\gamma}.\end{equation}
\end{theorem}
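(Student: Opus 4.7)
Proof Proposal.

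The plan is to combine the known $H^\infty(\C_0)$ interpolation with an iterative Dirichlet-polynomial correction scheme, exploiting the Bohr correspondence between $\Hi$ and bounded analytic functions on $\D^\infty\cap\ell^2$, together with the restriction $|s_j|\leq K$. The boundedness of $S$ enters through $|\Imag s_j|\leq K$, so that the characters $s\mapsto p_k^{-s}$ restricted to $S$ can be controlled via Kronecker--Weyl equidistribution on $\T^\infty$ with constants depending only on $K$.

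Given data $(a_j)$ with $\sup_j|a_j|\leq 1$, I would first invoke the $H^\infty(\C_0)$ interpolation theorem to obtain $g\in H^\infty(\C_0)$ with $g(s_j)=a_j$ and $\|g\|_\infty\leq M:=M_{H^\infty}(S)$. Next, I would approximate $g$ on a neighborhood of the bounded sequence $S$ by a Dirichlet polynomial $D_0\in\Hi$ chosen so that $\sup_j|a_j-D_0(s_j)|\leq 1/2$ and $\|D_0\|_{\Hi}\ll M^{\alpha}$ for some exponent $\alpha=\alpha(K)$. The construction of $D_0$ would use a Fej\'er-type kernel built from the frequencies $\log p_k$ combined with an almost-periodic approximation of $g$. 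Iterating on the residuals $\varepsilon_j^{(1)}=a_j-D_0(s_j)$, which have sup-norm at most $1/2$, I would produce Dirichlet polynomials $D_k$, $k\geq 0$, whose sum $f:=\sum_k D_k$ lies in $\Hi$, interpolates the prescribed values, and satisfies $\|f\|_\infty\ll M^{\gamma}$ with $\gamma=\gamma(K)$ after summing the resulting geometric series.

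The main obstacle is the quantitative approximation step: constructing a Dirichlet polynomial $D_0$ with $\Hi$-norm polynomially bounded in $M$ that simultaneously reduces the error at the interpolation points by a fixed factor. This step uses the boundedness of $S$ in an essential way---the number of frequencies and the quality of the almost-periodic approximation both scale with $K$---and dictates the final exponent $\gamma=\gamma(K)$. The remaining ingredients, namely classical $H^\infty(\C_0)$ interpolation and the geometric-series iteration, are standard.
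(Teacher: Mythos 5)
Your proposal reduces everything to a quantitative almost-periodic approximation lemma --- given $g\in H^{\infty}(\C_0)$ with $\|g\|_\infty\leq M$ and $g(s_j)=a_j$, produce a Dirichlet polynomial $D_0$ with $\|D_0\|_{\Hi}\ll M^{\alpha}$ and $\sup_j|a_j-D_0(s_j)|\leq 1/2$ --- but you do not prove that lemma, and it is not clear that it can be proved by the route you sketch. The obstruction is that $g$ is a generic bounded analytic function on $\C_0$, not a Dirichlet series: near the boundary $\Real s=0$, where the interpolation nodes may accumulate, its restriction to vertical lines need not be almost periodic, so Fej\'er kernels built from the frequencies $\log p_k$ and Kronecker--Weyl equidistribution do not give a uniform approximation of $g$ on a neighborhood of $S$ with error independent of how close $S$ comes to the imaginary axis. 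Moreover, the statement ``find $D_0$ with controlled $\Hi$-norm whose values at $s_j$ are close to prescribed $a_j$'' is essentially the original interpolation problem with a relaxed error tolerance, so the proposed base step is circular: establishing it with a polynomial-in-$M$ norm bound is exactly what needs to be proved. Once such a base step were in place, the geometric iteration would indeed close, but that step is the entire content of the theorem.

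The paper's route is different and more concrete: it cites [SE], where the bound is obtained by passing through the Bohr lift to the (finite) polydisc and invoking the Berndtsson--Chang--Lin interpolation theorem for $H^\infty(\D^n)$. Boundedness of $S$ controls how many polydisc coordinates matter and how the separation constants behave under the lift, which is where the exponent $\gamma=\gamma(K)$ comes from; the scalar $H^\infty(\C_0)$ interpolation constant then enters through a comparison of separation quantities (the content of [SE, Lemma 4]), not through an intermediate function $g$ that one tries to approximate. If you want to pursue your approach, the missing ingredient you would have to supply is a precise approximation or extension theorem transferring bounded analytic functions on $\C_0$ to $\Hi$ on bounded sets with controlled norm growth; no such result is invoked or proved here, and it would likely be as hard as the theorem itself.
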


This result is implicit in \cite{SE}; it is obtained by combining the interpolation theorem of Berndtsson and al. \cite{BECHLI} (see \cite[Lemma~3]{SE}) with \cite[Lemma~4]{SE}.  

%After having made this choice of $S$ and $S'$, we now estimate each of the three terms appearing on the right-hand side of \eqref{pepe}

\section{A Littlewood--Paley formula for $\Hp$ and proof of Theorem~\ref{sufficientcompact}}

Our new Littlewood--Paley formula reads as follows (abbreviating $\Vert\quad\Vert_{\Hp}$ to  $\Vert\quad\Vert_{p}$.) 
\begin{theorem}\label{PROPLITTLEWOODPALEY}
Let $\mu$ be a probability measure on $\mathbb R$ and $p\ge 1$. Then
\begin{equation}\label{lpf}\|f\|_p^p\asymp |b_1|^p+\int_{\mathbb T^\infty}\int_{0}^{+\infty}\int_{\mathbb R}\sigma |f_\chi(s)|^{p-2}|f_\chi'(s)|^2 d\mu(t)d\sigma dm_{\infty}(\chi)\end{equation}
holds for every Dirichlet series $f(s)=\sum_{n\geq 1}b_nn^{-s}$ in  $\Hp$ .
\end{theorem}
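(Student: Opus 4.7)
The plan is to express $\|f\|_{\Hp}^p$ via Lemma~\ref{LEMCALCULNORME} and then to integrate by parts in $\sigma$ after first averaging over characters. The observation that makes the argument work for a general probability measure $\mu$ on the boundary is that, after averaging over $\chi\in\T^\infty$, the function
\[ H(\sigma+it):=\int_{\T^\infty}|f_\chi(\sigma+it)|^p\,dm_\infty(\chi) \]
is independent of $t$. Indeed, the point $\chi_{it}:=(p_j^{-it})_{j\ge 1}\in\T^\infty$ satisfies $\chi_{it}(n)=n^{-it}$, so $f_{\chi\chi_{it}}(s)=f_\chi(s+it)$, and the translation $\chi\mapsto\chi\chi_{it}$ preserves $m_\infty$. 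Writing $H(\sigma)$ in place of $H(\sigma+it)$, Lemma~\ref{LEMCALCULNORME} applied to $f(\sigma+\cdot)$ together with $\mu(\RR)=1$ yields $H(\sigma)=\|f(\sigma+\cdot)\|_{\Hp}^p$, so $H(0^+)=\|f\|_{\Hp}^p$ and $H(\infty)=|b_1|^p$.

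I would next carry out a double integration by parts,
\[ \|f\|_{\Hp}^p-|b_1|^p \;=\; H(0^+)-H(\infty) \;=\; \int_0^\infty \sigma\,H''(\sigma)\,d\sigma, \]
after verifying that the boundary terms $\sigma H'(\sigma)$ vanish at $0^+$ and $\infty$. To identify $H''$, I would use the pointwise identity $\Delta|f_\chi|^p=p^2|f_\chi|^{p-2}|f_\chi'|^2$, valid away from zeros of $f_\chi$. Since $H$ is constant in $t$, $\partial_t^2 H\equiv 0$, so $\Delta H=H''$ and consequently
\[ H''(\sigma)=\int_{\T^\infty}\Delta|f_\chi(\sigma+it)|^p\,dm_\infty(\chi)=p^2\int_{\T^\infty}|f_\chi|^{p-2}|f_\chi'|^2\,dm_\infty(\chi). \]
Because this expression is itself $t$-independent, I can reinsert the probability measure $\mu$ for free, obtaining the desired equivalence with constants $c_p\asymp p^2$.

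The main technical obstacle is that the Laplacian identity is only pointwise valid off the zero set of $f_\chi$, and for $1\le p<2$ the density $|f_\chi|^{p-2}|f_\chi'|^2$ is unbounded near a simple zero. The standard remedy is to smooth by $u_\epsilon:=(|f_\chi|^2+\epsilon)^{p/2}\in C^\infty$, for which a direct computation gives $\Delta u_\epsilon=2p(|f_\chi|^2+\epsilon)^{p/2-2}|f_\chi'|^2\bigl(p|f_\chi|^2/2+\epsilon\bigr)$; one performs the integration by parts on $u_\epsilon$ and passes to the limit $\epsilon\to 0^+$ using monotone convergence (the limiting density is nonnegative). Secondary items are justifying the interchange of $\partial_\sigma^2$ with $\int_{\T^\infty}$ on each slab $\sigma\ge\sigma_0>0$ (by uniform local integrability, obtained from the subharmonicity of $|f_\chi|^p$ and the mean value inequality), reducing from general $f\in\Hp$ to Dirichlet polynomials by density, and controlling the boundary terms. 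The decay at $\sigma\to\infty$ is straightforward since $f(\sigma+\cdot)\to b_1$ geometrically; the $\sigma\to 0^+$ end is the more delicate one and is where tightness of $H$ and $H'$ up to the boundary must be argued.
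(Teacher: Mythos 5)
Your argument follows a genuinely different route from the paper's and, if the technical points are filled in as you sketch, it actually proves a \emph{sharper} statement. The paper's proof begins from Yamashita's Littlewood--Paley estimate on $H^p(\D)$ (which is itself only a two-sided bound), transports it to $\C_0$ via the Cayley maps $\omega_\xi$, averages over $\T^\infty$, exploits the resulting $t$-independence to insert an arbitrary probability measure $\mu$, and finally lets $\xi\to\infty$. You skip the disc entirely: the translation invariance of $m_\infty$ renders $H(\sigma+it)=\int_{\T^\infty}|f_\chi(\sigma+it)|^p\,dm_\infty(\chi)$ independent of $t$, so $\Delta H$ collapses to $H''$, and the Green-type integration by parts $H(0^+)-H(\infty)=\int_0^\infty\sigma H''(\sigma)\,d\sigma$ (with boundary terms controlled by the convexity of $H$, which you get from subharmonicity of $|f_\chi|^p$) yields, for Dirichlet polynomials, the \emph{identity}
\[\|f\|_p^p=|b_1|^p+p^2\int_{\T^\infty}\int_0^\infty\sigma\,|f_\chi(\sigma)|^{p-2}|f_\chi'(\sigma)|^2\,d\sigma\,dm_\infty(\chi),\]
which the $p=2$ computation confirms exactly, rather than just the equivalence $\asymp$. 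One correction to the write-up: the passage $\epsilon\to0^+$ is not by monotone convergence. From $\Delta u_\epsilon=2p(|f_\chi|^2+\epsilon)^{p/2-2}|f_\chi'|^2\bigl(\tfrac p2|f_\chi|^2+\epsilon\bigr)$ one computes that $\partial_\epsilon\Delta u_\epsilon$ changes sign depending on the ratio $|f_\chi|^2/\epsilon$ when $p\ne 2$, so the family $(\Delta u_\epsilon)$ is not monotone in $\epsilon$. What you should use instead (only needed for $1\le p<2$; for $p\ge 2$ the density is already continuous) is the uniform bound $\Delta u_\epsilon\le C_p\,|f_\chi|^{p-2}|f_\chi'|^2$, apply Fatou to obtain the integrability of the limiting density, and then conclude by dominated convergence. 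With that repaired, the remaining technical points you flag --- vanishing of $\sigma H'(\sigma)$ at both ends, interchange of $\partial_\sigma^2$ with $\int_{\T^\infty}$, and density of polynomials --- are correctly identified and do go through.
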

The notation $u(f)\asymp v(f)$ means as usual that there exists a constant $C\geq 1$ such that for every $f$ in question, $C^{-1}u(f)\leq v(f)\leq Cu(f)$.
\begin{proof}
We start from the Littlewood--Paley formula for $H^p(\mathbb D)$, which appears for instance in \cite{Ya79}: We have   
$$\|g\|_{H^p(\mathbb D)}^p\asymp |g(0)|^p+\int\!\int_{\mathbb D}{(1-|z|^2)} |g(z)|^{p-2} |g'(z)|^2 d\lambda(z)$$
when $g$ is in $H^p(\D)$, where now $d\lambda$ denotes Lebesgue area measure on $\D$.
Next we let $f$ be a Dirichlet polynomial, $\xi>0$, and consider the Cayley transform
$$\omega_{\xi}(z)=\xi\frac{1+z}{1-z},\quad \omega_\xi^{-1}(s)=\frac{s-\xi}{s+\xi}.$$
By Lemma \ref{LEMCALCULNORME},
\begin{equation}\label{basicf} \|f\|_p^p=\int_{\mathbb T^\infty}{\left(\int_{\mathbb R}|f_\chi(it)|^p\frac{\xi}{\pi(\xi^2+t^2)}dt\right)dm(\chi)}.\end{equation}
For fixed $\chi$ on $\mathbb T^\infty$, we find that
\begin{eqnarray*}
\int_{\mathbb R}|f_\chi(it)|^p\frac{\xi}{\pi(\xi^2+t^2)}dt&=&{\Vert f_{\chi}\circ\omega_\xi\Vert_{H^p(\mathbb D)}^p} \\
&\asymp&|f_\chi(\xi)|^p+\int\!\int_{\mathbb D}(1-|z|^2)|f_\chi\circ\omega_\xi(z)|^{p-2} |f_\chi'\circ\omega_\xi(z)|^2 |\omega_\xi'(z)|^2 d\lambda(z).
\end{eqnarray*}
By using the change of variables $s=\sigma+i t=\omega_\xi(z)$, we get
\begin{eqnarray*}
\int_{\mathbb R}|f_\chi(it)|^p\frac{\xi}{\pi(\xi^2+t^2)}dt&\asymp&|f_\chi(\xi)|^p+\int_{0}^{+\infty}\int_{\mathbb R}
\left(1-\frac{|s-\xi|^2}{|s+\xi|^2}\right)|f_\chi(s)|^{p-2}|f_\chi'(s)|^2dtd\sigma\\
&\asymp&|f_\chi(\xi)|^p+\int_{0}^{+\infty}\int_{\mathbb R}\frac{\sigma \xi}{(\sigma+\xi)^2+t^2}|f_\chi(s)|^{p-2}|f_\chi'(s)|^2dt d\sigma.
\end{eqnarray*}
We integrate this over $\mathbb T^\infty$. In view of \eqref{basicf}, this gives 
\begin{eqnarray*}
\|f\|_p^p&\asymp&\int_{\mathbb T^\infty}|f_\chi(\xi)|^pdm_\infty(\chi)+\\
&&\quad\quad \int_{0}^{+\infty}\frac{\sigma \xi}{\sigma+\xi}
\int_{\mathbb T^\infty}\int_{\mathbb R} |f_\chi(s)|^{p-2}|f_\chi'(s)|^2  \frac{\sigma+\xi}{\pi\big((\sigma+\xi)^2+t^2\big)}dtdm_\infty(\chi) d\sigma.
\end{eqnarray*}
{Using again} Lemma \ref{LEMCALCULNORME}, {which} is valid for any finite Borel measure, we obtain that
$$\int_{\mathbb T^\infty}\int_{\mathbb R} |f_\chi(s)|^{p-2}|f_\chi'(s)|^2  \frac{\sigma+\xi}{\pi\big((\sigma+\xi)^2+t^2\big)}dtdm_\infty(\chi)=\int_{\mathbb T^\infty}\int_{\mathbb R}|f_\chi(s)|^{p-2}|f_\chi'(s)|^2  d\mu(t)dm_\infty(\chi).$$
Finally, we conclude  by letting $\xi$ tend to infinity.
\end{proof}

In the sequel, it will be convenient to set $f(+\infty):=b_1$ in \eqref{lpf}. 

\begin{proof}[Proof of Theorem~\ref{sufficientcompact}]
Let $(f_n)$ be a sequence in $\Hp$ that converges weakly to zero. We will let $f_{n,\chi}$ denote the vertical limit function of $f_n$ with respect to the character $\chi$. By assumption (a) of Theorem~\ref{sufficientcompact}, there exists a positive number  $A$ such that $|\Imag \psi|\leq A$. This implies  that $|\Imag \psi_\chi|\leq A$ for any $\chi$ on $\T^\infty$. By the Littlewood--Paley formula, setting $w=u+iv$, we get
\begin{eqnarray*}\|C_\varphi(f_n)\|^p_p & \asymp &  |C_\varphi f_n(+\infty)|^p  \\  & + & \int_{\mathbb T^\infty}\int_0^{+\infty}\int_0^1 u |f_{n,\chi^{c_0}}(\varphi_\chi(w))|^{p-2} |f_{n,\chi^{c_0}}'(\varphi_\chi(w))|^2 |\varphi_\chi'(w)|^2 dv\, du\, dm_{\infty}(\chi).\end{eqnarray*}
Our assumption on $f_n$ implies that  $|f_n(+\infty)|$ and hence $C_\varphi f_n(+\infty)$ tend to zero. In the innermost integral, we use the non-univalent change of variables $s=\sigma +it=\varphi_{\chi}(u+iv)$. Observe that, for every $t$ in $[0,1]$, $-A\leq \Imag s\leq A+c_0$, whence
$$\|C_\varphi(f_n)\|^p_p\ll o(1)+\int_{\mathbb T^{\infty}}\int_0^{+\infty}\int_{-A}^{A+c_0}|f_{n,\chi^{c_0}}(s)|^{p-2}|f_{n,\chi^{c_0}}'(s)|^2\mathcal N_{\varphi_{\chi}}(s)dt\, d\sigma\, dm_{\infty}(\chi).$$
We now use assumption (b) of Theorem~\ref{sufficientcompact} in the following way. For any given $\varepsilon>0$, we let  $\theta>0$ be such that  $\mathcal N_{\varphi}(s)\leq\varepsilon \Real s$ whenever $\Real s<\theta$. We split
the integral over $\mathbb R_+$ into $\int_0^{\theta}+\int_{\theta}^{+\infty}$. For the first integral, say $I_0:=\int_0^\theta$, we use that $\mathcal N_{\varphi_\chi}(s)\leq\varepsilon \Real s$ for any $\chi$ on $\mathbb T^\infty$ and any $s$ with $\Real s <\theta$ (see \cite[Proposition 4]{BAYA}). Using again the Littlewood--Paley formula, we get that there exists some constant $C>0$ such that
$$I_0=\int_{\mathbb T^{\infty}}\int_0^{\theta}\int_{-A}^{A+c_0}|f_{n,\chi^{c_0}}(s)|^{p-2}|f_{n,\chi^{c_0}}'(s)|^2\mathcal N_{\varphi_{\chi}}(s)dtd\sigma dm_{\infty}(\chi)\leq C\varepsilon \|f_n\|_p^p.$$
For the second integral, say $I_\infty:=\int_\theta^\infty$, we observe that $\mathcal N_{\varphi_\chi}(s)\leq\frac 1{c_0}\Real s$ (see \cite[Proposition 3]{BAY}), so that
\begin{eqnarray*}
I_{\infty} & = & \int_{\mathbb T^{\infty}}\int_{\theta}^{+\infty}\int_{-A}^{A+c_0}|f_{n,\chi^{c_0}}(s)|^{p-2}|f_{n,\chi^{c_0}}'(s)|^2\mathcal N_{\varphi_{\chi}}(s)dtd\sigma dm_{\infty}(\chi)\\
& \le &  \frac{1}{c_0}\int_{\mathbb T^{\infty}}\int_{\theta}^{+\infty}\int_{-A}^{A+c_0}\sigma |f_{n,\chi^{c_0}}(s)|^{p-2}|f_{n,\chi^{c_0}}'(s)|^2 dtd\sigma dm_{\infty}(\chi)\\
&= &  \frac{1}{c_0}\int_{\mathbb T^{\infty}}\int_{\theta}^{+\infty}\int_{-A}^{A+c_0}\sigma |f_{n,\chi}(s)|^{p-2}|f_{n,\chi}'(s)|^2 dtd\sigma dm_{\infty}(\chi)\\
& \le & \frac{1}{c_0}\int_{\mathbb T^{\infty}}\int_{\theta/2}^{+\infty}\int_{-A}^{A+c_0}(\sigma+\theta/2) |f_{n,\chi}(s+\theta/2)|^{p-2}|f_{n,\chi}'(s+\theta/2)|^2 dtd\sigma dm_{\infty}(\chi)\\
& \le & \frac{2}{c_0}\int_{\mathbb T^{\infty}}\int_{\theta/2}^{+\infty}\int_{-A}^{A+c_0}\sigma |f_{n,\chi}(s+\theta/2)|^{p-2}|f_{n,\chi}'(s+\theta/2)|^2 dtd\sigma dm_{\infty}(\chi)\\
& \ll & \|f_n(\cdot+\theta/2)\|_p^p,
\end{eqnarray*}
and this last quantity goes to zero since the horizontal translation operator  $f(s)\mapsto f(s+\theta/2)$ acts compactly on $\Hp$.
\end{proof}

\section{Two general lower bounds}\label{general}
%%%%%%%%%%%%%%%%%%%%%
%%%%%%%%%%%%%%%%%%%%%%%%%
%%%%%%%%%%%%%%%%%%%%%%%%%%

 %We will set ${\Hp}=X,\ ({\Hp})^{\ast}=Y$, the dual space of $X$, 
 We will let $q$ denote the conjugate exponent of $p$. The evaluation $\delta_s$ at $s$ is in $({\Hp})^{\ast}$ and, by \eqref{norm},
 $\Vert \delta_s\Vert=[\zeta(2\Real s)]^{1/p}$
when $p$ is any real number $\geq 1$. Observe that $\delta_s/\Vert \delta_s\Vert$ converges weakly to $0$ as $\Real s\to 1/2$ and that $C_{\varphi}^{\ast}(\delta_s)=\delta_{\varphi(s)}$, so that a necessary condition for compactness of $C_\varphi:{\Hp}\to {\Hp}$ is that 
$$\lim_{\Real s\to 1/2}\frac{\Vert \delta_{\varphi(s)}\Vert}{\Vert \delta_s\Vert}=\lim_{\Real s\to 1/2}\left(\frac{ \zeta(2\Real\varphi(s))}{ \zeta (2\Real s)}\right)^{1/p}=0.$$
It is therefore not surprising to see the latter quotient appearing in our general estimates for $a_n(C_\varphi)$  in the two theorems given below. These results represent two different ways of obtaining lower bounds for the quantities $a_{n}(C_\varphi)$  via respectively $\Hi$ interpolation and $\Hp$ interpolation.

\begin{theorem}\label{belowc2}
Suppose that $\varphi(s)=c_0s+\sum_{n=1}^{\infty} c_n n^{-s}$ determines a compact composition operator $\Cp$ on  ${\Hp},\ 1\leq p<\infty$. 
Let $S=(s_j)$ and $S'=(s_j')$ be finite sets in $\C_{1/2}$, both of  of cardinality $n$, such that $\varphi(s_j')=s_j$ for every $j$.  Then we have
\begin{equation}\label{pepe} a_n(\Cp)\ge \rho_p n^{-(1/\min(2,p)-1/p)} [M_{{\Hi}}(S)]^{-1}\, \|\mu_{S'}\|_{{\mathcal C},{\Hp}}^{-1/p}\, \inf_{1\leq j\leq n} \left(\frac{\zeta(2 \Real s_j)}{\zeta(2 \Real s'_j)}\right)^{1/p}, \end{equation}
where $\rho_p$ is a  constant depending only on $p$.
\end{theorem}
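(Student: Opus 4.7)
The plan is to estimate $a_n(\Cp)$ via the approximation numbers of $\Cp^*$, exploiting $a_n(\Cp)=a_n(\Cp^*)$ (which holds since $\Cp$ is compact, cf.\ \eqref{stefan}) together with the ideal property (ii) recorded in Section~\ref{sn}, by factoring $\Cp$ through a small-dimensional sequence space. First, the $\Hi$ interpolation at $S$ furnishes a section $\sigma: \ell^\infty(n) \to \Hi \hookrightarrow \Hp$ with $(\sigma a)(s_k)=a_k$ and $\|\sigma(a)\|_{\Hp}\leq\|\sigma(a)\|_{\Hi}\leq M_{\Hi}(S)\,\|a\|_\infty$. Second, the Carleson property of $\mu_{S'}$ yields the evaluation operator $E:\Hp\to\ell^p(n,1/\|\delta_{s_k'}\|^p)$, $Ef=(f(s_k'))_k$, with $\|E\|\leq\|\mu_{S'}\|_{\mathcal{C},\Hp}^{1/p}$. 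Since $\varphi(s_k')=s_k$, the composition $E\circ\Cp\circ\sigma$ is the canonical identity $a\mapsto a$ from $\ell^\infty(n)$ into $\ell^p(n,1/\|\delta_{s_k'}\|^p)$, and the ideal property reduces matters to lower-bounding the $n$th approximation number of this inclusion.

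For this last step, a naive Bernstein estimate yields only $1/\max_k\|\delta_{s_k'}\|$, missing both the $\|\delta_{s_j}\|$ weights in the numerator and the cotype-related power of $n$. The crucial refinement is to invoke Lemma~\ref{type}: $(\Hp)^*$ has cotype $q^*=\max(q,2)$, where $q$ is the conjugate exponent of $p$. This corresponds, through a Pisier-type factorization (equivalently, the dual type inequality for $\Hp$), to factoring $\sigma$ through $\ell^{p^*}(n)$ with $p^*=\min(p,2)$, which introduces the norm of the embedding $\ell^{q^*}(n)\hookrightarrow\ell^q(n)$, equal to $n^{1/q-1/q^*}=n^{1/\min(2,p)-1/p}$. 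The weight $\|\delta_{s_j}\|/\|\delta_{s_j'}\|$ emerges from matching the $\Hi$-controlled data on $S$ against the $\Hp$-controlled Carleson evaluation at $S'$ through this ratio.

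The cleanest implementation is to work in the dual and invoke $a_n(\Cp^*)\geq b_n(\Cp^*)$ using the $n$-dimensional test subspace $E=\vspan\{\delta_{s_k'}/\|\delta_{s_k'}\|:1\leq k\leq n\}$ of $(\Hp)^*$. For a generic $x=\sum_k\beta_k\delta_{s_k'}/\|\delta_{s_k'}\|\in E$, the upper bound $\|x\|_{(\Hp)^*}\leq\|\mu_{S'}\|_{\mathcal{C},\Hp}^{1/p}\|\beta\|_{\ell^q(n)}$ (obtained by dualizing Carleson) is to be matched by a lower bound on $\Cp^*x=\sum_k\beta_k\delta_{s_k}/\|\delta_{s_k'}\|$: pairing this vector against an $\Hi$-interpolant at $S$ whose prescribed values are scaled proportionally to $\|\delta_{s_k}\|$ produces, via duality, a bound involving $M_{\Hi}(S)^{-1}$ together with the weights $\|\delta_{s_j}\|$. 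After applying cotype of $(\Hp)^*$ to replace $\|\beta\|_q$ by the smaller $\|\beta\|_{q^*}$ at the cost of the factor $n^{1/q-1/q^*}$, and taking the infimum over $\beta$ in the unit sphere of the appropriate norm, one extracts the claimed bound. \textbf{The main obstacle} is the precise calibration of the scaled test function so that $M_{\Hi}(S)$ enters with no extraneous factor of $\max_k\|\delta_{s_k}\|$ and the cotype inequality produces exactly $\inf_j(\|\delta_{s_j}\|/\|\delta_{s_j'}\|)$ rather than a weaker quantity; this forces one to thread the interpolation data, the dual Carleson estimate, and the cotype comparison together in a single coupled optimization over $\beta$.
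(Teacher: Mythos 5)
Your scaffolding coincides with the paper's: pass to the adjoint, bound $a_n(\Cp^{\ast})$ below by the Bernstein number on the span of the $\delta_{s'_j}$, control the unit-norm functional $L=\sum_j\lambda_j\delta_{s'_j}$ from above by H\"older plus the Carleson norm of $\mu_{S'}$, and bring in $M_{\Hi}(S)$ and the cotype of $(\Hp)^{\ast}$. But the one step you leave unresolved --- how $M_{\Hi}(S)$ and cotype combine so that each weight $\Vert\delta_{s_j}\Vert$ enters individually, with no stray $\max_k\Vert\delta_{s_k}\Vert$ --- is exactly the heart of the proof, and your proposed mechanism (pairing $\Cp^{\ast}L=\sum_j\lambda_j\delta_{s_j}$ against a single $\Hi$-interpolant whose prescribed values are ``scaled proportionally to $\Vert\delta_{s_k}\Vert$'') does not deliver it: $\Hi$-interpolation handles sup-normalized data, so the duality pairing gives either $M_{\Hi}(S)^{-1}\sum_j|\lambda_j|$ with no weights at all, or, after rescaling the data to size $\Vert\delta_{s_k}\Vert$, precisely the unwanted division by $\max_k\Vert\delta_{s_k}\Vert$. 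Announcing that this is to be fixed by ``a single coupled optimization over $\beta$'' is not an argument; as written there is a genuine gap at the decisive step.

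The paper's resolution is to use the interpolant as a \emph{multiplier} rather than as a test function. Since $\Hi$ is isometrically the multiplier algebra of $\Hp$, choosing $h\in\Hi$ with $h(s_j)=\omega_j$, $|\omega_j|=1$, and $\Vert h\Vert_\infty\le M_{\Hi}(S)$ gives $\Phi_\omega(f)=\Phi(hf)$ for $\Phi=\sum_j\lambda_j\delta_{s_j}$, hence the unconditionality estimate $\bigl\Vert\sum_j\omega_j\lambda_j\delta_{s_j}\bigr\Vert\le M_{\Hi}(S)\,\bigl\Vert\sum_j\lambda_j\delta_{s_j}\bigr\Vert$ for all unimodular $\omega_j$. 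Averaging over Rademacher signs and applying the cotype $q^{\ast}=\max(q,2)$ inequality of Lemma~\ref{type} to the vectors $\lambda_j\delta_{s_j}$ then yields $\Vert\Phi\Vert\ge\rho_p\,[M_{\Hi}(S)]^{-1}\bigl(\sum_j|\lambda_j|^{q^{\ast}}\Vert\delta_{s_j}\Vert^{q^{\ast}}\bigr)^{1/q^{\ast}}$, where each $\delta_{s_j}$ contributes its own norm and no maximum appears; pulling out $\inf_j\Vert\delta_{s_j}\Vert/\Vert\delta_{s'_j}\Vert$ and combining with the Carleson upper bound on $\Vert L\Vert$ finishes the proof (the case $p=1$, $q^{\ast}=\infty$, is read off directly from the unconditionality). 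Note also that your description of cotype as serving ``to replace $\Vert\beta\Vert_q$ by $\Vert\beta\Vert_{q^{\ast}}$ at the cost of $n^{1/q-1/q^{\ast}}$'' misplaces its role: that comparison is elementary H\"older on $\C^n$ and is indeed where the factor $n^{1/\min(2,p)-1/p}$ comes from, but cotype is needed precisely for the randomized weighted lower bound above, which no choice of a single interpolating test function can produce.
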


\begin{proof} As already noted, the transpose $C_{\varphi}^{\ast}:({\Hp})^{\ast}\to ({\Hp})^{\ast}$ still verifies in an obvious way  the mapping equation 
$$C_{\varphi}^{\ast}(\delta_a)=\delta_{\varphi(a)},$$  
which was used extensively in our previous work \cite{QS1}.
We will use the  inequality \eqref{stefan} in the form %(holding for arbitrary Banach spaces and arbitrary operators)
  $$ a_{n}(C_\varphi)\geq a_{n}(C_\varphi^{\ast})\geq b_{n}(C_\varphi^{\ast})$$ and minorate the latter quantity. For clarity, we separate the proof into two parts.

\subsection*{Case 1: $p\geq 2$}

  Let $E$ be the space generated by $\delta_{s'_1}, \ldots, \delta_{s'_n}$. This is an $n$-dimensional space.  Let  $L=\sum_{j=1}^n \lambda_j \delta_{s'_j}$ be an element in  the unit sphere $S_E$ of $E$. If $f$ is in $ {\Hp}$, then the Cauchy-Schwarz inequality  in $\C^n$ implies that
$$|L(f)|=\Big|\sum_{j=1}^n \lambda_j f(s'_j)\Big|=\Big|\sum_{j=1}^n \Lambda_j F_j\Big|\leq \Vert \Lambda\Vert_2\, \Vert F\Vert_2,$$ where we have set 
$$\Lambda_j:=\lambda_j \Vert \delta_{s'_j}\Vert \quad \text{and}\quad  F_j:=f(s'_j) \Vert \delta_{s'_j}\Vert^{-1}$$ as well as $$\Lambda:=(\Lambda_1,\ldots,\Lambda_n)  \quad \text{and}\quad  F:=(F_1,\ldots, F_n).$$  Since $L$ is assumed to be in the unit sphere of $E$,
H\"older's inequality now implies that %(thanks to Olsen-Saksman when $p=2,4,\ldots, 2k,\ldots$, and due to the fact that $\mu_{S'}$ is "localized", i.e. has bounded support)$$\Vert F\Vert_{2}=\Big(\sum_{j=1}^n \Vert \delta_{s'_j}\Vert^{-2}\vert f(s'_j)\vert^2\Big)^{1/2}\leq n^{1/2-1/p} \Big(\sum_{j=1}^n  \Vert \delta_{s'_j}\Vert^{-p}\vert f(s'_j)\vert^p \Big)^{1/p}$$$$\leq  n^{1/2-1/p}(\Vert \mu_{S'}\Vert_{\mathcal {C},{\Hp}})^{1/p} \Vert f\Vert_{{\Hp}}^{p}$$ so that, passing to the supremum on $f$  running over the unit ball of $X$, and using that $\Vert L\Vert=1$, we obtain 
\begin{equation}\label{andone}1\leq n^{1/2-1/p}\,\Vert \Lambda\Vert_2 (\Vert\mu_{S'} \Vert_{\mathcal {C},{\Hp}})^{1/p}.\end{equation}

Next we observe that the sequence $\delta_{s_j}$ is unconditional with constant $\leq M_{{\Hi}}(S)=:M_S$, i.e.,
\begin{equation}\label{andnext}M_{S}^{-1}\left\Vert \sum \omega_j \lambda_j \delta_{s_j}\right\Vert \leq \left\Vert \sum  \lambda_j \delta_{s_j}\right\Vert\leq M_S\,\left\Vert \sum \omega_j \lambda_j \delta_{s_j}\right\Vert \end{equation}
for any choice of scalars $\lambda_j$ and unimodular scalars $\omega_j$. To see this, we first set
$$\Phi=\sum_{j=1}^n \lambda_j \delta_{s_j},\  \Phi_\omega=\sum_{j=1}^n \omega_j \lambda_j \delta_{s_j}.$$
If $h$ in $ \Hi$ verifies $h(s_j)=\omega_j,\ 1\leq j\leq n$, and $\Vert h\Vert_{\infty}\leq M_S$, then we see that, for every $f\in {\Hp}$, 
$\Phi_{\omega}(f)=\Phi(hf)$. Since ${\Hi}$ is isometrically  equal to the multiplier algebra of ${\Hp}$, we therefore get that
$$|\Phi_{\omega}(f)|\leq \Vert \Phi\Vert \Vert hf\Vert_{{\Hp}}\leq \Vert \Phi\Vert \Vert h\Vert _\infty \Vert f\Vert_{{\Hp}}\leq M_S\,\Vert \Phi\Vert  \Vert f\Vert_{{\Hp}}.$$
 This gives the left-hand inequality of \eqref{andnext}. The right-hand inequality readily follows, replacing $\lambda_j$ by $\lambda_j \omega_j$ and $\omega_j$ by $\overline{\omega_j}$.  Averaging with respect to independent choices of $\omega_j$ (Rademacher variables) and using Lemma~\ref{type} and \eqref{gipi}, we get from the left-hand side of \eqref{andnext}, setting $\rho_p=[2T_{2}({\Hp})]^{-1}$, that
\begin{eqnarray*} \Vert \Phi\Vert & \geq & M_{S}^{-1}\mathbb{E}\left\Vert \sum \omega_j\,\lambda_j\,\delta_{s_j}\right\Vert\geq M_{S}^{-1} \rho_p\left(\sum|\lambda_j|^{2}|\Vert \delta_{s_j}\Vert^2\right)^{1/2} \\ &\geq & M_{S}^{-1}\rho_p\, \inf_{1\leq j\leq n}\frac{\Vert \delta_{s_j}\Vert}{\Vert \delta_{s'_j}\Vert} \left(\sum|\lambda_j|^{2}|\Vert \delta_{s'_j}\Vert^2\right)^{1/2}. \end{eqnarray*}
%where 
%$$\mu_n=\inf_{1\leq j\leq n}\frac{\Vert \delta_{s_j}\Vert}{\Vert \delta_{s'_j}\Vert}$$ and
%where $\rho_2=\rho_{2}(p)$ \textcolor{red}{is the half-inverse of the type $2$-constant of $X$ (this follows from (\ref{gipi})).} 
By the mapping equation, $C_{\varphi}^{\ast}(L)=\Phi$, and hence we get 
 $$\Vert C_{\varphi}^{\ast}(L) \Vert\geq \rho_p\, M_{S}^{-1} \inf_{1\leq j\leq n} \left(\frac{\zeta(2 \Real s_j)}{\zeta(2 \Real s'_j)}\right)^{1/p} \,\Vert \Lambda\Vert_2.$$
 %where we have set $\rho_p:=(2T_2(X))^{-1}$.
Using \eqref{andone}, we finally obtain %(recalling that $p\geq 2$ and so $q\leq 2$)
%\begin{equation}\label{andtwo}
 \[\Vert C_{\varphi}^{\ast}(L)\Vert\geq \rho_p n^{-(1/2-1/p)}\, M_{S}^{-1}  \Vert \mu_{S'}\Vert_{\mathcal {C},{\Hp}}^{-1/p} \inf_{1\leq j\leq n} \left(\frac{\zeta(2 \Real s_j)}{\zeta(2 \Real s'_j)}\right)^{1/p}. \] % \end{equation}
This implies the desired result since 
$b_{n}(C_{\varphi}^{\ast})\ge \inf_{L\in S_E} \Vert C_{\varphi}^{\ast}(L)\Vert$.   
\subsection*{Case 2: $1\leq p< 2$}

We follow word for word the same route, with H\"older instead of Cauchy--Schwarz and $({\Hp})^{\ast}$ of cotype $q$ (see Lemma \ref{type}). In the special case $p=1$, we have $q=\infty$, but then (\ref{andnext}) implies $$\sup_{1\leq j\leq n}|\lambda_j| \Vert \delta_{s_j}\Vert\leq M_S \Vert \sum_{j} \lambda_j \delta_{s_j}\Vert$$
so that 
$$\inf_{1\leq j\leq n}\frac{\Vert \delta_{s_j}\Vert}{\Vert \delta_{s'_j}\Vert}\Vert\, \Lambda\Vert_\infty \leq\sup_{1\leq j\leq n}|\lambda_j| \Vert \delta_{s_j}\Vert\leq M_S  \Vert C_{\varphi}^{\ast}(L)\Vert.$$

We thus obtain for all $1\leq p<2$ the two inequalities
%\begin{equation}\label{andthree}
\begin{eqnarray*} 1 & \leq & \Vert \Lambda\Vert_q (\Vert\mu_{S'} \Vert_{\mathcal {C},{\Hp}})^{1/p}, \\
 \Vert C_{\varphi}^{\ast}(L)\Vert\geq \rho_p M_{S}^{-1}\Vert \Lambda\Vert_q  \inf_{1\leq j\leq n}\frac{\Vert \delta_{s_j}\Vert}{\Vert \delta_{s'_j}\Vert} &\geq  &\rho_p M_{S}^{-1}\, \Vert \mu_{S'}\Vert_{\mathcal {C},{\Hp}}^{-1/p}\, \inf_{1\leq j\leq n} \left(\frac{\zeta(2 \Real s_j)}{\zeta(2 \Real s'_j)}\right)^{1/p},  \end{eqnarray*}
where $\rho_p=(2 T_p({\Hp}))^{-1}$  for $1<p<2$ and $\rho_1=1$.
This takes care of the second case and ends the proof of Theorem \ref{belowc2}. \end{proof}

 We turn to the bound for $a_n(C_{\varphi})$ using $\Hp$ interpolation.  
 
%\textcolor{red}{Is it necessary that $p\geq2$ in the following theorem? (see the proof, we use $q\leq 2$).}
  
\begin{theorem}\label{belowc2bis}
Suppose that $\varphi(s)=c_{0}s+\sum_{n=1}^{\infty} c_n n^{-s}$ determines a compact composition operator $\Cp$ on  ${\Hp}$. Let $S=(s_j)$ and $S'=(s'_j)$ be finite sets in $\C_{1/2}$, both of  of cardinality $n$, such that $\ \varphi(s'_j)=s_j$ for every $j$.  Then 
we have
\begin{equation}\label{pp} a_n(\Cp)\ge n^{-(1/\min(2,p) -1/p)} [M_{{\Hp}}(S)]^{-1}\, \|\mu_{S'}\|_{{\mathcal C},{\Hp}}^{-1/p}\, \inf_{1\leq j\leq n} \Big(\frac{\zeta(2 \Real s_j)}{\zeta(2\Real s'_j)}\Big)^{1/p}. \end{equation}
\end{theorem}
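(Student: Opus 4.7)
The plan is to run the Bernstein-numbers argument of Theorem~\ref{belowc2} in parallel, but to replace the $\Hi$-multiplier unconditionality step by duality against $\Hp$-interpolation. This substitution is exactly what causes the type/cotype factor $\rho_p$ to disappear from the statement. By \eqref{stefan}, $a_n(\Cp) \ge a_n(\Cp^\ast) \ge b_n(\Cp^\ast)$, so it suffices to bound the Bernstein number of $\Cp^\ast$ on the $n$-dimensional subspace $E := \vspan(\delta_{s_1'},\dots,\delta_{s_n'}) \subset (\Hp)^\ast$.

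For $L = \sum_j \lambda_j \delta_{s_j'}$ in the unit sphere of $E$, I set $\Lambda_j := \lambda_j \|\delta_{s_j'}\|$. Exactly as in Theorem~\ref{belowc2}, a H\"older estimate on $|L(f)|=|\sum_j\Lambda_j (f(s_j')/\|\delta_{s_j'}\|)|$ combined with the Carleson inequality for $\mu_{S'}$ gives $\|\Lambda\|_q \ge \|\mu_{S'}\|_{\mathcal{C},\Hp}^{-1/p}$, where $q$ is conjugate to $p$. The new step is the lower bound on $\|\Cp^\ast(L)\| = \|\sum_j \lambda_j \delta_{s_j}\|_{(\Hp)^\ast}$ coming from interpolation. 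By definition of $M_{\Hp}(S)$, for every $(a_j)_{j=1}^n$ there is $f \in \Hp$ with $f(s_j) = a_j$ and $\|f\|_p \le M_{\Hp}(S)(\sum_j |a_j|^p \|\delta_{s_j}\|^{-p})^{1/p}$. Pairing $\Cp^\ast(L)$ against such an $f$ yields $|\sum_j \lambda_j a_j| \le \|\Cp^\ast(L)\|\,\|f\|_p$; taking the supremum over $(a_j)$ in the unit ball of the weighted space $\ell^p_n(\|\delta_{s_j}\|^{-p})$ and recognising the resulting dual norm, I obtain
\[ \|\Cp^\ast(L)\| \ge M_{\Hp}(S)^{-1} \Big(\sum_j |\lambda_j|^q \|\delta_{s_j}\|^q\Big)^{1/q}, \]
with the usual $\sup$ convention when $p = 1$ and $q = \infty$.

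After substituting $\lambda_j = \Lambda_j/\|\delta_{s_j'}\|$, the ratio $\zeta(2\Real s_j)/\zeta(2\Real s_j')$ factors cleanly as $\inf_j(\cdot)^{1/p}$, leaving $\|\Lambda\|_q$ to be bounded from below. For $1 \le p \le 2$, the preceding step already provides $\|\Lambda\|_q \ge \|\mu_{S'}\|_{\mathcal{C},\Hp}^{-1/p}$, which gives the stated inequality with $n^0 = 1$. For $p > 2$ one has $q < 2$, so $\|\Lambda\|_q \ge \|\Lambda\|_2 \ge n^{-(1/2-1/p)}\|\mu_{S'}\|_{\mathcal{C},\Hp}^{-1/p}$ by H\"older (the same conversion as in Theorem~\ref{belowc2}), producing the factor $n^{-(1/\min(2,p)-1/p)}$. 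Taking the infimum over $L$ in the unit sphere of $E$ finishes the proof. The only nontrivial point in the argument is the duality computation giving the $\ell^q(\|\delta_{s_j}\|^q)$ lower bound; everything else is bookkeeping, with minor care needed at the endpoint $p=1$ where the weighted-$\ell^\infty$ dual norm $\sup_j|\lambda_j|\|\delta_{s_j}\|$ must be used throughout.
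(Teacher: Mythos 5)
Your proposal is correct and follows essentially the same route as the paper's own proof: Bernstein numbers of $C_\varphi^*$ on the span of the $\delta_{s_j'}$, the H\"older/Carleson estimate coming from the fact that $L$ lies on the unit sphere, and the Boas-type duality lower bound for $\|C_\varphi^*(L)\|$ derived from $M_{\Hp}(S)$. Your duality computation giving $\|C_\varphi^*(L)\|\geq M_{\Hp}(S)^{-1}\big(\sum_j|\lambda_j|^q\|\delta_{s_j}\|^q\big)^{1/q}$ is exactly the ``Boas-type lower bound'' of the paper, and the factoring of $\inf_j(\zeta(2\Real s_j)/\zeta(2\Real s_j'))^{1/p}$ is the same bookkeeping step. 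One small remark: your detour through $\|\Lambda\|_2$ for $p>2$ is unnecessary, since you already established the direct H\"older bound $\|\Lambda\|_q\geq\|\mu_{S'}\|_{\mathcal C,\Hp}^{-1/p}$ for all $p\geq 1$; pairing that with the $\ell^q$ Boas bound gives the conclusion with $n^0=1$ for every $p$, i.e., a slightly sharper inequality than the one stated (the $n^{-(1/\min(2,p)-1/p)}$ factor is only a loss when $p>2$). The paper introduces the same harmless loss by re-using the $\ell^2$-framed inequality from Theorem~\ref{belowc2}, so you are consistent with the source — but worth knowing that the cleaner $\ell^q$ chain you wrote actually proves more.
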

 \begin{proof}
  The proof begins like that of Theorem \ref{belowc2}, using  the Bernstein numbers of the transpose of $C_\varphi:\Hp\to \Hp$. We have once again
  \begin{equation}\label{andonebis}1\leq n^{1/\min(2,p)-1/p}\,\Vert \Lambda\Vert_2 (\Vert\mu_{S'} \Vert_{\mathcal {C},{\Hp}})^{1/p}.\end{equation}

  From now on,  we no longer appeal to cotype and ${\Hi}$ interpolation, but to ${\Hp}$ interpolation and a Boas-type lower bound, namely
\[\Big\Vert \sum_{j} \lambda_j\delta_{s_j}\Big\Vert\geq \big[M_{\mathscr{H}^p}(S)\big]^{-1}\big(\sum_{j}|\lambda_j|^q\,\Vert \delta_{s_j}\Vert^{q} \big)^{1/q}\geq\big[M_{\mathscr{H}^p}(S)\big]^{-1}\Vert\Lambda\Vert_2. \] Here the latter inequality holds since $q\leq 2$ and therefore $\Vert\Lambda \Vert_{\ell^q}\geq \Vert\Lambda \Vert_{\ell^2} $. The first inequality is proved by duality as follows. Write
\[\big(\sum_{j} |\lambda_j|^q\Vert \delta_{s_j}\Vert^{q}\big)^{1/q}=\sum_{j} c_j\lambda_j\Vert \delta_{s_j}\Vert, \quad \text{where}\quad \sum_{j}|c_j|^p=1.\]
Observe that $\sum_{j}\big(|c_j| \Vert \delta_{s_j}\Vert\big)^{p}\Vert \delta_{s_j}\Vert^{-p}=1$ so that $c_j\Vert \delta_{s_j}\Vert=f(s_j)$ for some $f\in {\Hp}$ with norm $\leq M_{\mathscr{H}^p}(S)$. We finally get
\[ \big(\sum_{j} |\lambda_j|^q\Vert \delta_{s_j}\Vert^{q}\big)^{1/q}=\sum_{j} \lambda_j\, f(s_j)=\Phi(f) \leq \Vert f\Vert_{{\Hp}}\|\Phi\| \leq  M_{\mathscr{H}^p}(S)\Big\Vert \sum_{j} \lambda_j\delta_{s_j}\Big\Vert. \] 
Using \eqref{andonebis} and the bound $\|\Phi\|\ge [M_{\mathscr{H}^p}(S)]^{-1}\Vert\Lambda\Vert_2$, we conclude the proof in the same way as we did in the proof of Theorem \ref{belowc2}.
\end{proof}

The difficulty in applying  Theorems \ref{belowc2} or \ref{belowc2bis} is that it is in general difficult to get good estimates for $M_{{\Hi}}(S)$, $M_{{\Hp}}(S)$ or  $\Vert \mu_{S}\Vert_{\mathcal{C}, {\Hp}}$. We will later see some special cases in which this is in fact possible. 
 %We now focus, in the special case $c_0=0$, on a  form of Theorem  \ref{belowc2bis} which is better suited for applications.

\section{Proof of the bounds in Theorem~\ref{General}}\label{proofbounds}
%%%%%%%%%%%%%%%%%%%%%%%%%%%%
%%%%%%%%%%%%%%%%%%%%%%%%%%%%%
%%%%%%%%%%%%%%%%%%%%%%%%%%%%%% 

\subsection*{Part (a)} %By the result of Bayart mentioned above, $C_\varphi$ is always bounded on ${\Hp}$ when $p$ is an even integer.

 We present two proofs: a sketchy one, based on the spectral properties of $C_\varphi$, and a detailed one, based on Theorem \ref{belowc2}, illustrating the utility of $\Hi$ interpolation.

  The first approach  uses the spectrum $\sigma(C_\varphi)$ of $C_\varphi$ on $\Hp$ described in Theorem \ref{fred}: 
\begin{equation}\label{spec} \sigma(C_\varphi)=\{0\}\cup\Big\{[\varphi'(\alpha)]^k,\ k=0,1,\ldots\Big\}.\end{equation}
 We can moreover assume that $r_0=|\varphi'(\alpha)|>0$, as in Lemma 6.1 of \cite{QS1}. Finally, \eqref{spec}, Theorem \ref{multi}, and a  tauberian argument show that
$$a_{n}(C_\varphi)\gg r_{0}^{8n}.$$ 

 The second approach goes as follows. Let $\Delta$ be an open disc whose closure is contained in $\C_{1/2}$. Clearly $\varphi(\Delta)$ contains a closed disc $\overline{D}(a,r):=\{s: \ |s-a|\le r\}$ with $0<r<1$. Set 
$$S=\{s_j:=a+r\omega^j : 1\leq j\leq n\},\hbox{\ where}\ \omega=e^{2i\pi/n},$$
and let $S'=\{s'_j\}$  be a set of $n$ distinct points from $\Delta$ such that 
$$\varphi(s'_j)=s_j,\quad 1\leq j\leq n.$$ 
We introduce the associated Blaschke product $$B(s):=\prod_{1\leq j\leq n}\frac{s-s_j}{s+\overline{s_j}-1}=\frac{(s-a)^n -r^n}{(s+\overline{a}-1)^n -r^n} $$ and find, by an elementary computation, that the uniform separation constant $$\delta(S)=\inf_{1\leq j\leq n} (2\sigma_j-1)|B'(s_j)|$$ of $S$ verifies 
\begin{equation}\label{dunil}\delta(S)\gg r^n.\end{equation}
It is known that $M_{H^\infty}(S)\le (2e+4e|\log \delta(S)|)/\delta(S)$ \cite[p. 268]{Ko}, where $M_{H^\infty}(S)$ denotes the constant of interpolation for the space $H^{\infty}(\C_{0})$ of bounded, analytic functions on $\C_0$.
  %Since analytic self-maps of $\C_{1/2}$ contract the pseudo-hyperbolic distance $\rho$, defined here by the formula
%$$\rho(a,b)=\Big\vert\frac{a-b}{a+\overline{b}-1}\Big\vert$$

 %we clearly have 
%\begin{equation}\label{contract} \delta(S')\geq \delta(S).\end{equation}
 %We denote by $T$ the analytic map $:\C_0\to \D^2$, the bidisk, defined by$$T(s)=(2^{-s}, 3^{-s})$$ and by $\rho_2$ the Gleason distance in $\D^2$ defined by 
%$$\rho_{2}(a,b)=\max\left(\Big\vert\frac{a_1-b_1}{1-\overline{a_1}b_1}\Big\vert,\  \Big\vert\frac{a_2-b_2}{1-\overline{a_2}b_2}\Big\vert\right)\hbox{\ if}\ a=(a_1,a_2)\hbox{\ and}\  b=(b_1,b_2)\in \D^2.$$

  After having made this choice of $S$ and $S'$, we now estimate each of the three terms appearing on the right-hand side of \eqref{pepe} of Theorem~\ref{belowc2} with help of Theorem \ref{krisei}. We first claim that 
\begin{equation}\label{etun} M_{{\Hi}}(S)\ll r^{-(\gamma+\varepsilon)n}\end{equation}
for every $\varepsilon>0$.
Indeed, it follows from (\ref{dunil}) and the relations between interpolation and uniform separation constants that 
$M_{H^\infty}(S)\ll [1/\delta(S)]^{1+\varepsilon}\ll r^{-(1+\varepsilon)n}$, and then \eqref{tsf} gives the result.
We find next that 
\begin{equation}\label{etdeux} \Vert \mu_{S'}\Vert_{\mathcal{C},{\Hp}}\ll n.\end{equation}
This is a consequence of Lemma \ref{firstlem} since $S'=(s'_j)$ is uniformly bounded  and lies far from the boundary $\Real s=1/2$.  %$ \Vert \mu_{S'}\Vert_{\mathcal{C},{\Hp}}\ll  \Vert \mu_{S'}\Vert_{\mathcal{C},{\Ht}}$.
 %Moreover, we see from (\ref{contract}) that 
%$$M_{H^\infty}(S')\ll [1/\delta(S')]^2\leq [1/\delta(S)]^2\ll r^{-2n}.$$
%Now, an appeal to the Shapiro-Shields theorem (\cite{SS}) gives (\ref{etdeux}). 
Finally, we observe that 
\begin{equation}\label{ettrois}\inf_{1\leq j\leq n}\frac{\Vert \delta_{s_j}\Vert}{\Vert \delta_{s'_j}\Vert}\gg 1. \end{equation}
This is immediate since $\Vert \delta_{s_j}\Vert\geq 1$ and $\Vert \delta_{s'_j}\Vert=O(1)$ as $S'$ remains far from the boundary when $n$ increases. %our point evaluations remain bounded from above and from below.

Now part (a) of Theorem \ref{General} follows from Theorem~\ref{belowc2} if we put together \eqref{etun}, \eqref{etdeux}, and \eqref{ettrois}; taking into account the factors $n$ and $n^{-(1/2-1/p)}$, we observe that we may choose $\delta=r^{\gamma+\varepsilon}$ for an arbitrary $\varepsilon>0$.

\subsection*{Part (b):\ $c_0=1$} 
We first prove  \eqref{bill} by applying Theorem \ref{four} with $r=1/(\Real c_1)$. By  Theorem~\ref{fred}, the left-hand side of \eqref{bernard} is infinite. It follows that the right-hand side is infinite as well, whence the result follows. Our proof of \eqref{bill} given above does not lead to a pointwise estimate of $a_{n}(C_\varphi)=:a_n$. To achieve this,  we use \eqref{albrecht} with $N$ in place of $n$, where $N>2n$ is an integer to be chosen later. We set $\gamma_1=\Real c_1$ and use that $\lambda_{j}(C_\varphi)=j^{-c_1}$ to obtain 
$$2^{-\gamma_1}\,N^{-\gamma_1}=(2N)^{-\gamma_1}\leq e\,\big(a_{1}\cdots a_{N}\big)^{1/N}\leq e\,\big(a_{1}^n\,a_{n}^{(N-n)}\big)^{1/N}= e\,a_{1}^{n/N}a_{n}^{(N-n)/N}.$$
This implies that 
$$a_n\geq 2^{-\gamma_{1}N/(N-n)}\, e^{-N/(N-n)}a_{1}^{-n/(N-n)}N^{-\gamma_{1}N/(N-n)}\gg  N^{-\gamma_{1}N/(N-n)},$$
%where $\delta$ is a positive constant depending only on $\varphi$ which can change from line to line. 
which we now write as 
$$a_n \gg N^{-\gamma_{1}}\,N^{-\gamma_{1}n/(N-n)}=N^{-\gamma_{1}}\,e^{-\gamma_{1}n\log N/(N-n)}.$$ Choosing $N$ as the integer part of $n\log n+2$ and noting that 
$\frac{n\log N}{N-n}\to 1$, we finally get
$$a_n\gg (n\log n)^{-\gamma_1}$$ as claimed. 

It may be observed that the latter argument gives an alternate proof of  \eqref{bill}.

%the same tauberian trick as in (\cite{QS1}, Theorem 1.2 (b)).
%We can recapture  (\ref{bill}) in this way: indeed, if $\sum_{n\geq 1}[a_{n}]^{1/\Real c_1}<\infty$, then  $\sum_{n\geq 1}[(a_{1} a_{2}\cdots %a_{n})^{1/n}]^{1/\Real c_1}<\infty$ as well by the Carleman inequality, and $\sum_{n\geq 1}|\lambda_{2n}(C_\varphi)|^{1/\Real c_1}<\infty$ by 
% (\ref{albrecht}), which fails to be the case.}   

\subsection*{Part (c):\ $c_0\geq 2$} In this case, the spectrum is reduced to $\{0,1\}$ (see Subsection~\ref{Bayarts}), so that the previous proof does not work. We will proceed differently and use Bernstein numbers and a properly chosen $n$-dimensional space $E$.  This new argument will in fact work also when $c_0=1$ and give an alternate proof for that case.

 Our proof is based on the following lemma which exploits the fact that the collection of linear functions constitute an infinite-dimensional subspace of $H^p(\T^\infty)$. In what follows, we let $\Omega(N)$ denote the number of prime factors in $N$ counted with their multiplicity, and we let $P_{c_0}$ denote orthogonal projection from $\Ht$ onto its subspace generated by the basis vectors $N^{-s}$ with $\Omega(N)= c_0$.
 \bigskip

 \begin{lemma}\label{choice}
Fix an integer $n\geq 1$. Suppose that $\varphi(s)=c_{0}s+\sum_{j=1}^\infty c_j j^{-s}$ is in $\GH$ with $c_0\geq 1$. Let $E$ be the $n$-dimensional subspace of $\Hp$ spanned by the unit vectors $p_{1}^{-s}, p_{2}^{-s},\ldots, p_{n}^{-s}$. Then for every $f(s)=\sum_{k=1}^n b_k p_{k}^{-s}$ in $E$, we have
\begin{equation}\label{projection}P_{c_0}C_{\varphi} f(s)=\sum_{k=1}^n b_k p_{k}^{-c_1} (p_{k}^{c_0})^{-s} \quad \text{and} \quad \Vert P_{c_0} C_{\varphi} f \Vert_{\Hp}\leq \Vert C_{\varphi} f\Vert_{\Hp}.\end{equation} 
  \end{lemma}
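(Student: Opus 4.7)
\textbf{Proof plan for Lemma \ref{choice}.} The argument splits naturally into a bookkeeping identity for $C_\varphi$ applied to the primitive vectors $p_k^{-s}$ and a contraction property for the projection $P_{c_0}$.

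\emph{Identifying $P_{c_0}C_\varphi f$.} Since $\varphi(s)=c_0 s+\psi(s)$ with $\psi(s)=c_1+\widetilde\psi(s)$ where $\widetilde\psi(s)=\sum_{j\ge 2}c_j j^{-s}$, I would first compute
\[
C_\varphi f(s)=\sum_{k=1}^n b_k\, p_k^{-\varphi(s)}=\sum_{k=1}^n b_k\, p_k^{-c_1}\,(p_k^{c_0})^{-s}\, \exp\!\bigl(-(\log p_k)\,\widetilde\psi(s)\bigr).
\]
The exponential factor expands (in a suitable half-plane of absolute convergence) as a Dirichlet series $\sum_{m\ge 1}a_m(k)\,m^{-s}$ in which the constant coefficient is $a_1(k)=1$ and every other index $m$ is a product of integers $\ge 2$ drawn from $\widetilde\psi$, so $\Omega(m)\ge 1$ for $m\ge 2$. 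Multiplying by $(p_k^{c_0})^{-s}$, a term of the expansion contributes an index $p_k^{c_0}m$ with $\Omega(p_k^{c_0}m)=c_0+\Omega(m)$, so $P_{c_0}$ retains only $m=1$. Summing over $k$ yields $P_{c_0}C_\varphi f(s)=\sum_{k=1}^n b_k\, p_k^{-c_1}(p_k^{c_0})^{-s}$, as claimed.

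\emph{Contraction of $P_{c_0}$ on $\Hp$.} The second assertion is not about $\varphi$ at all; it is the statement that the extension of $P_{c_0}$ by the same coefficient-selection rule is a norm contraction on $\Hp$. Via the Bohr lift, $\Hp\cong H^p(\T^\infty)$, and the monomial $N^{-s}$ with $N=p_1^{\alpha_1}p_2^{\alpha_2}\cdots$ corresponds to $z^\alpha$, whose total degree equals $\Omega(N)$. Thus $P_{c_0}$ is the projection onto $c_0$-homogeneous Dirichlet polynomials. Let $R_\theta$ denote the diagonal rotation $(z_j)\mapsto (e^{i\theta}z_j)$ on $\T^\infty$; because Haar measure $m_\infty$ is invariant under $R_\theta$, this is an isometry of $H^p(\T^\infty)$, and for any $F$ in $H^p(\T^\infty)$ one has the reproducing formula
\[
P_{c_0} F = \int_0^{2\pi} e^{-i c_0 \theta}\, R_\theta F\, \frac{d\theta}{2\pi}.
\]
Minkowski's integral inequality then gives $\|P_{c_0}F\|_{\Hp}\le \|F\|_{\Hp}$. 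Taking $F=C_\varphi f$ concludes the proof.

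\emph{Expected difficulties.} There is no deep obstacle; the main point of care is the first step, where one must verify that the formal Dirichlet expansion of $p_k^{-\varphi(s)}$ really does produce no index of total prime multiplicity $c_0$ beyond the principal one $p_k^{c_0}$. This is a purely combinatorial statement about $\Omega$ and is immediate once $\psi$ is split off from its constant term $c_1$. The averaging/rotation argument for the norm bound is standard on $H^p(\T^\infty)$ and valid uniformly for all $p\ge 1$, so no issue arises from the non-Hilbertian nature of $\Hp$.
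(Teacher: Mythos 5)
Your proposal is correct and follows essentially the same two-step route as the paper: the same formal expansion of $C_\varphi(p_k^{-s})$ (your exponential $\exp(-(\log p_k)\widetilde\psi(s))$ is precisely the paper's product $\prod_{j\ge 2}\bigl(1+\sum_{l\ge 1}\tfrac{(-c_j\log p_k)^l}{l!}j^{-ls}\bigr)$), and the same rotation-averaging formula realizing $P_{c_0}$ as a norm-one operator on $\Hp$ (your $\int_0^{2\pi}e^{-ic_0\theta}R_\theta F\,\tfrac{d\theta}{2\pi}$ is the Bohr-side expression of the paper's operator $Q$). No substantive difference.
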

\begin{proof} We know from \cite{GORHED} that the following formal computation is allowed to determine the Dirichlet coefficients of $C_{\varphi}(p_{k}^{-s}), 1\leq k\leq n$:
$$C_{\varphi}(p_{k}^{-s})=p_{k}^{-c_0 s} p_{k}^{-c_1}\prod_{j=2}^\infty (1+\sum_{l=1}^\infty \frac{(-c_j \log p_k)^l}{l!}j^{-ls})=:p_{k}^{-c_0 s} p_{k}^{-c_1}(1+\sum_{m\geq 2} \alpha_{k,m} m^{-s}),$$ so that $P_{c_0} C_{\varphi}f$ can be expressed as stated in \eqref{projection}.  For the norm estimate, using the Bohr lift, we note that for $h(s)=\sum_{N=1}^{\infty} \beta_N N^{-s}$, the formula
$$Q(h)(s)=(1/2\pi)\int_{0}^{2\pi} \left(\sum_{N=1}^\infty \beta_N\, e^{i \Omega(N) \theta}N^{-s}\right)  e^{-ic_0 \theta} d\theta$$
defines a norm-one projection from $\Hp$ to its subspace generated by the vectors $N^{-s}$ with $\Omega(N)=c_0$. But this means that
$$\Vert P_{c_0} C_{\varphi} f  \Vert_{\Hp}=\Vert Q C_{\varphi} f  \Vert_{\Hp} \leq \| C_\varphi f\|_{\Hp},$$
which gives the second part of \eqref{projection}. \end{proof}

We are now ready to prove part (c) of Theorem~\ref{General}. Choose $E$ as in Lemma \ref{choice} and let $f$ be a vector in the unit sphere of $E$. By Lemma~\ref{choice} and the Bohr lift , we get
\[ \Vert C_{\varphi}(f)\Vert_{\Hp}  \geq  \big\Vert P_{c_0} C_{\varphi} f \big\Vert_{\Hp}\ge \big\Vert \sum_{k=1}^n c_k p_{k}^{-c_1}z_{k}^{c_0}\big\Vert_{H^{p}(\T^\infty)}=\big\Vert \sum_{k=1}^n c_k p_{k}^{-c_1}z_{k}\big\Vert_{H^{p}(\T^\infty)}, \]
where we for the last relation used the invariance of the Haar measure $m_\infty$ of $\T^\infty$ under the transformation $(z_j)\mapsto (z_{j}^{c_0})$. Applying the Khintchin inequality for the Steinhaus variables $z_j$ twice, we get 
%which appear as independent random choices of complex signs with respect to  $m_\infty$.
\begin{eqnarray*} \Vert C_{\varphi}(f)\Vert_{\Hp} & \gg & \big\Vert \sum_{k=1}^n c_k p_{k}^{-c_1}z_{k}\big\Vert_{H^{2}(\T^\infty)} 
\ge p_{n}^{-\Real c_1}\big\Vert \sum_{k=1}^n c_k z_{k}\big\Vert_{H^{2}(\T^\infty)} \\ 
& \gg & p_{n}^{-\Real c_1}\big\Vert \sum_{k=1}^n c_k z_{k}\big\Vert_{H^{p}(\T^\infty)}=p_n^{-\Real c_1}\| f\|_{\Hp}. \end{eqnarray*} It follows that 
$$ a_{n}(C_\varphi){\geq} b_{n}(C_\varphi)\gg p_n^{-\Real c_1}.$$
By the Tchebycheff form of the prime number theorem, $p_n\ll n\log n$,  and so the desired estimate follows.

%\end{proof}

\section{Optimality of the bounds in Theorem~\ref{General}}\label{optimality}
%%%%%%%%%%%%%%%%
%%%%%%%%%%%%%%%%%%%
The bounds (a), (b), (c) of Theorem~\ref{General} are optimal in view of the following theorem.
\begin{theorem}\label{optimal} Suppose that $c_0$ is a nonnegative integer and that $\varphi(s)=c_0 s+\sum_{n=1}^\infty c_n n^{-s}$ generates a bounded composition operator $C_{\varphi}$ on ${\Hp}$ for some $1\leq p<\infty$. 
\begin{itemize}
\item [(a)] If $c_0=0$ and $\Omega:=\overline{\varphi(\C_0)}\subset \C_{1/2}$ is compact, then $a_n(C_{\varphi})\ll \delta^n$ for some $0<\delta<1$.
\item [(b)] If $c_0\geq 1$, and if $\varphi(\C_0)\subset \C_A$ for some $A>0$, then $a_n(C_{\varphi})\ll n^{-A}$  if $p>1$ and $a_n(C_{\varphi})\ll (\log n) n^{-A}$ if $p=1$.
 \end{itemize}
 
 \end{theorem}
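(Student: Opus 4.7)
The plan is to prove (a) by approximating $C_\varphi$ with rank-$N$ operators built from Taylor expansions, and (b) by factoring $C_\varphi$ through a diagonal vertical translation and using the Schauder basis estimates. For part (a), set $K := \overline{\varphi(\C_0)}$, a compact subset of $\C_{1/2}$, and choose $s_0 \in \C_{1/2}$ together with radii $r < R$ such that $K \subset \overline{D(s_0, r)}$ and $\overline{D(s_0, R)} \subset \C_\theta$ for some $\theta > 1/2$. Since $c_0 = 0$ and $\varphi(\C_0) \subset K$ is bounded, $\varphi$ lies in $\Hi$ and therefore each $(\varphi - s_0)^k$ belongs to $\Hi \subset \Hp$; combined with the boundedness on $\Hp$ of the evaluation functionals $f \mapsto f^{(k)}(s_0)$, I would define
$$R_N f(s) := \sum_{k=0}^{N-1} \frac{f^{(k)}(s_0)}{k!}(\varphi(s) - s_0)^k,$$
a bounded operator from $\Hp$ to $\Hp$ of rank at most $N$.

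For the norm estimate in (a), notice that $(C_\varphi f - R_N f)(s) = (f - P_N f)(\varphi(s))$, where $P_N f$ is the Taylor polynomial of $f$ about $s_0$ of degree $N-1$. Since $\varphi(\C_0) \subset K$ sits at positive distance from $\partial\C_{1/2}$, the pointwise inequality $|g(s)| \leq [\zeta(2\Real s)]^{1/p}\|g\|_\Hp$ implies that $C_\varphi$ actually carries $\Hp$ into $\Hi$; together with the trivial embedding $\|\cdot\|_\Hp \leq \|\cdot\|_\Hi$, this gives
$$\|C_\varphi f - R_N f\|_\Hp \leq \sup_{w \in K}|f(w) - P_N f(w)|.$$
A Cauchy estimate on $\partial D(s_0, R)$ combined with the uniform bound $\sup_{|z-s_0|=R} |f(z)| \ll \|f\|_\Hp$ produces $\sup_K|f - P_N f| \ll (r/R)^N\|f\|_\Hp$, and therefore $a_{N+1}(C_\varphi) \ll \delta^N$ with $\delta = r/R < 1$.

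For part (b), the key identity is $C_\varphi = C_{\tilde\varphi} \circ T_A$, where $T_A f(s) := f(s+A)$ acts diagonally on the basis via $T_A(n^{-s}) = n^{-A}\,n^{-s}$, and $\tilde\varphi(s) := c_0 s + (\psi(s) - A)$. The hypothesis $\varphi(\C_0) \subset \C_A$ combined with $c_0 \geq 1$ forces $\psi(\C_0) \subset \C_A$ (by letting $\Real s \to 0^+$), so $\tilde\varphi$ belongs to $\GH$, modulo a harmless perturbation $A \leftarrow A - \varepsilon$ in the degenerate case $\Real\psi \equiv A$. By Theorem \ref{FRED}(b), $C_{\tilde\varphi}$ is a contraction on $\Hp$, and the ideal property of approximation numbers reduces the problem to estimating $a_n(T_A)$.

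I would control $a_n(T_A)$ by the rank-$N$ approximation $T_A S_N$, where $S_N$ is the partial sum operator. For $p > 1$, since $(n^{-s})$ is a Schauder basis (Theorem \ref{thmhaos}), applying the contraction principle of Lemma \ref{contract} to the decreasing sequence $(n^{-A})_{n \geq N+1}$ yields $\|T_A(I - S_N)f\|_\Hp \leq 2 C_p(N+1)^{-A}\|f\|_\Hp$, whence $a_n(T_A) \ll n^{-A}$. For $p = 1$, where $(n^{-s})$ fails to be a basis but Theorem \ref{parsum} still gives $\|S_N\|_{\Ho\to\Ho} \ll \log N$, I would invoke the Abel summation identity
$$T_A(I - S_N)f = -(N+1)^{-A}\,S_N f + \sum_{n \geq N+1}(n^{-A} - (n+1)^{-A})\,S_n f,$$
combined with $n^{-A} - (n+1)^{-A} \ll n^{-A-1}$, to obtain $\|T_A(I - S_N)\|_{\Ho\to\Ho} \ll N^{-A}\log N$ and hence $a_n(T_A) \ll n^{-A}\log n$. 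The main delicate point is this Abel summation for $p = 1$, which must absorb the logarithmic loss coming from Theorem \ref{parsum}; for part (a), the crucial observation is that compactness of $K$ in $\C_{1/2}$ upgrades $C_\varphi$ to a map $\Hp \to \Hi$, which is what permits the passage from sup-norm Taylor remainder estimates to $\Hp$ operator-norm bounds.
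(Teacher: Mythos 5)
Your proposal is correct in substance. For part (b) it is essentially the paper's own argument: you factor $C_\varphi=C_{\tilde\varphi}\circ C_{T_A}$ with $\tilde\varphi=\varphi-A$, invoke Theorem~\ref{FRED} and the ideal property, and then bound $a_n(C_{T_A})$ by the contraction principle (Lemma~\ref{contract}) when $p>1$ and by Abel summation against the $\log N$ bound of Theorem~\ref{parsum} when $p=1$; this is exactly Section 8. For part (a) you take a genuinely different route: the paper estimates the Gelfand numbers by restricting $C_\varphi$ to the subspace of functions vanishing to order $n$ at a point, factoring out an adapted Blaschke product, controlling the result with the pullback measure of Theorem~\ref{blift}, and then converting via $a_n\le 2\sqrt n\,c_n$; you instead build explicit rank-$N$ approximants $R_Nf=(P_Nf)\circ\varphi$ from Taylor polynomials and bound $\|C_\varphi-R_N\|$ by a Cauchy remainder estimate together with the contractive embedding $\Hi\subset\Hp$ (your remark that $C_\varphi$ maps $\Hp$ into $\Hi$ plays the role of the pullback measure). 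This is more elementary and avoids the $2\sqrt n$ loss, at the price of a typically worse (but still admissible) value of $\delta=r/R$.

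Two points need patching. First, the existence of $s_0$ and $r<R$ with $K\subset\overline{D(s_0,r)}$ and $\overline{D(s_0,R)}\subset\C_\theta$ is not automatic if $s_0$ is chosen near $K$: a compact set of large diameter lying close to the line $\Real s=1/2$ defeats that choice. It does always hold after pushing $s_0$ far to the right: if $\theta_0:=\min_{w\in K}\Real w>1/2$ and $|\Imag w-\tau_0|\le M$ on $K$, take $s_0=\sigma_0+i\tau_0$ with $\sigma_0$ large, $r=\bigl((\sigma_0-\theta_0)^2+M^2\bigr)^{1/2}$ and $R=\sigma_0-1/2-\eta$ with $0<\eta<\theta_0-1/2$; then $r<R$ once $\sigma_0$ is large enough, and the argument proceeds as you wrote. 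Second, in part (b) your treatment of the degenerate case $\Real\psi\equiv A$ by replacing $A$ with $A-\varepsilon$ only yields $a_n(C_\varphi)\ll n^{-(A-\varepsilon)}$, which does not recover the claimed $n^{-A}$ (and this case includes the model symbol $\varphi(s)=s+A$). No perturbation is needed: $\Real\psi\equiv A$ forces $\psi\equiv A+ib$ with $b$ real, so $\tilde\varphi(s)=c_0s+ib$ and $C_{\tilde\varphi}$ is an isometry of $\Hp$, being the composition of a vertical translation with the substitution induced by the measure-preserving map $(z_j)\mapsto(z_j^{c_0})$ on $\T^\infty$ (exactly as used in the proof of part (c) of Theorem~\ref{General}); when $b=0$ one even has $\psi-A\equiv 0$, which is admissible in $\GH$ directly. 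With these two small repairs your proof is complete.
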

 \begin{proof} We split the proof into three parts.
 \subsection*{Part (a)} We recall that the $n$th Gelfand number $c_{n}(C_\varphi)$ of  an operator $T$ on $\Hp$ is  
$$c_{n}(T)=\inf_{E} \Vert T|_{E}\Vert,$$ where $E$ runs over all subspaces of $\mathcal{H}^p$ of codimension $<n$.
 Let $E_0$ be the subspace of $\mathcal{H}^p$ defined by 
 $$E_0=\{f\in {\Hp} : f(s_0)=f'(s_0)=\cdots= f^{(n-1)}(s_0)=0\},$$
%$$E_0:=\{f\in \mathcal{H}^p : f(s_1)=\cdots=f(s_{n-1})=0\},$$ 
where $\Real s_0\geq \theta$ and $\theta=\inf_{s\in \Omega} \Real s>1/2$.
This is a subspace of codimension $<n+1$. We will first prove that
 \begin{equation}\label{firstprove}\Vert C_\varphi |_{ E_0}\Vert^{p}\leq \sup_{s\in \Omega}\big\vert B(s)\vert^{p}\zeta(1/2+\theta), \end{equation} where $B$ is the ``adapted" Blaschke product
 $$B(s)=\left(\frac{s-s_0}{s-(1/2+\theta)+\overline{s_0}}\right)^n$$
which is of modulus $1$ on the vertical line $\Real s=1/4+\theta/2$.
%We will even take all the $s_j$ equal to the same value $s_1\geq \theta$, then 
%$$E_0=\{f\in {\Hp} : f(s_1)=f'(s_1)=\cdots= f^{(n-1)}(s_1)=0\}$$
We set  $$r:=\sup_{s\in \C_\theta} \Big\vert \frac{s-s_1}{s+\overline{s_1}-1}\Big\vert<1$$  
and $M:=\sup_{s\in \Omega}|B(s)|=r^{n-1}$. 

 We now choose an arbitrary $f$ in $ E_0$. This $f$ can be written $f=Bh$ with $h$ having the same supremum as $f$ on the vertical line $\Real s=1/4+\theta/2$. Using the maximum principle, we observe that 
\begin{eqnarray*} \sup_{s\in \Omega}\vert f(s)\vert^p\leq \sup_{s\in \Omega}\vert B(s)\vert^p\, \sup_{s\in \Omega}\vert h(s)\vert^p & = &M^p\,\sup_{s\in \partial\Omega}\vert h(s)\vert^p=M^p\,\sup_{s\in \partial\Omega}\vert f(s)\vert^p\\ & \leq & M^p\,\zeta(1/2+\theta)\Vert f\Vert^p.\end{eqnarray*}
We use the pullback measure $\mu_{\varphi}$  defined by
\eqref{ollssak}
%\[ \mu_{\varphi}(E):=m_{\infty}(\{z\in \T^\infty : \Phi^{\ast}(z)\in E\  \})=m_{\infty}\left((\Phi^{\ast})^{-1}(E\ )\right)\] %\end{equation}
and the set $\Omega:=\overline{\varphi(\C_0)}$. Using the lifting identity \eqref{lifting} of Theorem \ref{blift}, we then get 
 $$\Vert C_{\varphi}(f)\Vert_{{\Hp}}^{p}=\int_{\T^\infty} \vert f(\Phi^{\ast}(z))\vert^{p}dm_{\infty}(z)=\int_{\Omega} \vert f\vert^{p}d\mu_{\varphi}.$$ 
 %(Recall that $m_\infty$ denotes the Haar measure of $\T^\infty$). 
 We infer from this that
\begin{eqnarray*} \Vert C_{\varphi}(f)\Vert_{\Hp}^p &=& \int_{\T^\infty} \vert f(\Phi^{\ast}(z))\vert^{p}dm_{\infty}(z )= \int_{\Phi^{\ast}(z)\in\Omega}\vert f(\Phi^{\ast}(z))\vert^{p}dm_{\infty}(z) \\ & \leq & \Big[M^p\,\zeta(1/2+\theta)\Big]\Vert f\Vert_{{\Hp}}^{p}, \end{eqnarray*}
which gives \eqref{firstprove}. It follows that 
$$[c_{n}(C_\varphi)]^p\leq \Vert C_\varphi\vert E_0\Vert^{p}\leq \Big[M^p\,\zeta(1/2+\theta)\Big]^{1/p}.$$
Inequality \eqref{stefane}, which states that  $a_{n}(C_\varphi)\leq 2\sqrt n\, c_{n}(C_\varphi)$,  finally gives 
$$a_{n}(C_\varphi)\leq 2\sqrt n\,r^{n-1}[\zeta(1/2+\theta)]^{1/p}.$$

\subsection*{Part (b), p>1}
We consider first the special case in which $\varphi(s)=s+A$ and $A>0$. This function is seen to belong to $ \GH$. For a given integer $n\geq 2$, let $R$ the $(n-1)$-rank operator defined by 
\[ Rf:=\sum_{j=1}^{n-1}j^{-A} x_j e_j,\] 
where $f=\sum_{j=1}^{\infty} x_j e_j$. It follows that $C_{\varphi}f=\sum_{j=1}^{\infty}j^{-A} x_j e_j$ and that 
$$(C_\varphi -R)f=\sum_{j\geq n} j^{-A}x_j\, e_j.$$
Now the contraction principle \eqref{prinzip} with $\lambda_j=j^{-A}$ gives 
$$a_{n}(C_\varphi)\leq\Vert C_\varphi-R\Vert\leq 2Cn^{-A},$$ 
which settles our special case.

%\noindent  Next consider the case $\varphi(s)=c_{0}s+A$ with $c_0\geq 1$, and write 
%$$\varphi=I\circ T \hbox{\ with}\ I(s)=c_{0}s \hbox{\ and}\ T(s)=s+A/c_0.$$ 
%We get 
%$$a_{n}(C_\varphi)=a_{n}(C_T\,C_I)\leq \Vert C_I\Vert a_{n}(C_T)=a_{n}(C_T)\leq 2C\,n^{-A/c_0}.$$
%(recall (\cite{BAYA}) that $C_I$ is an isometry of ${\Hp}$ to itself). 
 In the general case, we write $\varphi=T_A\circ\varphi_A$, where 
$$\varphi_{A}(s)=\varphi(s)-A \hbox{\quad and}\quad T_{A}(s)=s+A.$$
Since $\varphi_{A}(\C_0)\subset \C_0$, we see from Theorem~\ref{FRED} that $C_{\varphi_A}$ maps ${\Hp}$ into itself. Now the semi-group property and  the ideal property of approximation numbers (see Subsection~\ref{sn}), as well as the previous special case, give
$$a_{n}(C_\varphi)=a_{n}(C_{\varphi _{A}}\circ C_{T_A})\leq \Vert C_{\varphi_A}\Vert a_{n}(C_{T_A})\ll n^{-A}.$$

\subsection*{Part (b), p=1}
 It is easy to conclude from  Lemma \ref{parsum}. Indeed, repeating the proof of the contraction principle for Schauder bases (Lemma~\ref{contract}) and using that 
 \[  \sum_{j\geq n}\frac{\log j}{j^{A+1}}\ll \frac{\log n}{n^{A}},\]
 we obtain 
%\begin{equation}\label{contrnow} 
\[\Big\Vert \sum_{j\geq n}j^{-A}x_j e_j\Big\Vert_1\ll \frac{\log n}{n^{A}}\Big\Vert \sum_{j\geq1} x_j e_j\Big\Vert_1.\] %\end{equation}
 We get $a_{n}(C_{T_A})\ll  (\log n)/n^{A}$ for $T_{A}(s)=s+A$ and conclude as before in the general case $\Real \varphi(s)>A$.  
 \end{proof}

\section{A transference principle}\label{transfer}

%%%%%%%%%%%%%
%%%%%%%%%%%%%%%

In \cite{QS1}, we found a recipe for transferring  a general composition operator on $H^{2}(\D)$ to a composition operator on $\Ht$. The point was that, under this transference, decay rates for approximation numbers are preserved or at least not perturbed severely.  The same transference makes sense in the $\Hp$ setting, but we succeed only partially in getting similarly precise results as in \cite{QS1}.   We will now present this state of affairs and briefly describe the two basic problems that prevent us from proceeding further.   

We begin by describing the recipe from \cite{QS1}. Given $1\le p<\infty$, we let $T$ be some conformal map from $\D$ into $\C_{1/2}$, which we will assume has the property that the operator $C_T$ is bounded from $\Hp$ to $H^p(\D)$. We introduce the function
$$I(s):=2^{-s}$$
which we view as an analytic map from $\C_{0}$ onto $\D\backslash\{0\}$. If $\omega$ is an analytic self-map of $\D$, then we  define an analytic map $\varphi:\C_0\to \C_{1/2}$ by the formula $ \varphi:=T\circ \omega\circ I$, which implies $C_\varphi=C_I\circ C_\omega\circ C_T$.
The Dirichlet series $\varphi$ is then the symbol of a bounded composition operator $C_{\varphi}$ on ${\Hp}$ with $c_0=0$.

A natural choice is to set $T=T_0$, where
\[ T_0(z):=\frac{1}{2}+\frac{1-z}{1+z}, \]
so that $T$ maps $\D$ onto $\C_{1/2}$. Unfortunately, this forces us to require $p$ to be even integer. 
This constraint comes, as in Theorem~\ref{FRED}, from the local embedding 
\begin{equation}\label{lokal}\sup_{a\in \mathbb{R}}\,\int_{a}^{a+1}|f(1/2+it)|^p\,dt\leq C \Vert f\Vert_{{\Hp}}^{p},\end{equation}
which is only known to hold when $p$ is an even integer. This result relies on a well-known inequality in analytic number theory \cite{M}. See \cite{SaSe} and also \cite{OLSA} for a thorough discussion of this inequality and its connections with Carleson measures. Assuming that $f$ is in $\Hp$ and using \eqref{lokal}, we get
that 
\begin{eqnarray*} \| f\circ T_0\|_{H^p(\D)}^p &=& \int_{-\pi}^\pi \vert f\big(1/2+i\tan(t/2)\big)\vert^{p}\, \frac{dt}{2\pi}=\int_{-\infty}^\infty \vert f(1/2+ix)\vert^{p}\frac{dx}{\pi(1+x^2)} \\ & = & \sum_{k\in \Z}\int_{k}^{k+1} \vert f(1/2+ix)\vert^{p}\frac{dx}{\pi(1+x^2)}\ll \sum_{k\in \Z} \frac{1}{k^2 +1}\Vert f\Vert_{{\Hp}}^{p}\ll \Vert f\Vert_{{\Hp}}^{p}.\end{eqnarray*}
It follows that the composition operator defined by the formula $C_{T}$ is a bounded operator from  ${\Hp}$ to $H^p(\D)$. 

For other values of $p$, we may instead choose, for example,
\[ T_\varepsilon(z):=\frac{1}{2}+\left(\frac{1-z}{1+z}\right)^{1-\varepsilon} \]
for some $0<\varepsilon<1$. Using the pointwise estimates $|f( \sigma+it)|\le [\zeta(2\sigma)]^{1/p}\|f\|_{\Hp}$ along with 
$$2\Real T_{\varepsilon}(z)-1\geq (2\sin \pi\,\varepsilon/2)\,\Big\vert \frac{1-z}{1+z}\Big\vert^{1-\varepsilon},$$ 
we may compute in a similar way as above to get that 
\[ \| f\circ T_\varepsilon\|_{H^p(\D)}^p\ll \int_{-\infty}^\infty \| f \|^{p}_{\Hp} \max(1,|x|^{\varepsilon-1}) \frac{dx}{(1+x^2)}\ll  \|f\|_{\Hp}^p.\]

We are prepared to state our first basic estimate for $C_\varphi$.

 \begin{theorem}\label{tsfc} Let $\omega$ be an analytic self-map of $\D$. Assume that $1\le  p <\infty$ and that $C_T$ is bounded from $H^p(\D)$ into $\Hp$, and set $\varphi:=T \circ \omega \circ I$. Then
\[   a_n(\Cp) \le \| C_T \| a_n(C_\omega). \]
In particular, $C_\varphi$ is compact whenever $C_{\omega}$ is compact. \end{theorem}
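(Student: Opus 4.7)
\bigskip

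\noindent\textbf{Proof proposal for Theorem \ref{tsfc}.}

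The plan is to exploit the multiplicative decomposition $C_\varphi = C_I \circ C_\omega \circ C_T$ that falls out of the factorization $\varphi = T \circ \omega \circ I$, and then to apply the ideal property of approximation numbers listed in Subsection \ref{sn}. Concretely, once we know that all three factors act between appropriate Banach spaces, we obtain
\[ a_n(C_\varphi) = a_n(C_I \circ C_\omega \circ C_T) \le \|C_I\|\, a_n(C_\omega)\, \|C_T\|. \]
The crux is therefore to identify the norm of $C_I$ between $H^p(\mathbb{D})$ and $\Hp$, and the claim to be proved is that this norm equals $1$.

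The verification that $C_I : H^p(\mathbb{D}) \to \Hp$ is an isometry (with $\|C_I\|=1$) proceeds via the Bohr lift. For $f(z) = \sum_{n=0}^\infty a_n z^n \in H^p(\mathbb{D})$, the composition $C_I f$ is the Dirichlet series
\[ C_I f(s) = f(2^{-s}) = \sum_{n=0}^\infty a_n \, (2^n)^{-s}, \]
whose nonzero coefficients live on the powers of the prime $p_1 = 2$. Under the Bohr correspondence $\Delta(s) = (p_j^{-s})$, the lift $\Phi$ of $C_I f$ on $\T^\infty$ depends only on the first coordinate $z_1$, and is equal to $\sum_n a_n z_1^n$. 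Since the Haar measure on $\T^\infty$ projects to normalized Lebesgue measure on the first coordinate $\T$, we obtain
\[ \|C_I f\|_{\Hp}^p = \int_{\T^\infty} \Big| \sum_n a_n z_1^n \Big|^p dm_\infty(z) = \int_\T \Big| \sum_n a_n z_1^n \Big|^p \frac{|dz_1|}{2\pi} = \|f\|_{H^p(\mathbb{D})}^p, \]
which establishes the isometric embedding.

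With $\|C_I\|=1$ in hand, the inequality $a_n(C_\varphi) \le \|C_T\|\, a_n(C_\omega)$ is immediate from the ideal property. For the compactness statement, one may argue either by noting that $a_n(C_\omega) \to 0$ whenever $C_\omega$ is compact and that an operator whose approximation numbers tend to $0$ is compact, or directly by observing that a composition of bounded operators with one compact factor is compact. The heart of the argument is the isometry of $C_I$; the rest is soft. I expect no serious obstacle beyond checking that the Bohr lift applies cleanly to functions supported on $\{2^n\}$, which is the simplest possible case of the lift and requires no delicate convergence considerations, since $I(\C_0) = \mathbb{D} \setminus \{0\}$ and $\omega(\mathbb{D}) \subset \mathbb{D}$ forces $\overline{\varphi(\C_0)}$ to land in a compact subset of $\C_{1/2}$, making all compositions well defined on $\Hp$.
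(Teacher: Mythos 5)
Your argument is correct and is essentially the paper's own proof: factor $C_\varphi = C_I\circ C_\omega\circ C_T$, show $C_I : H^p(\D)\to\Hp$ is an isometry via the Bohr lift (your explicit one-variable computation is exactly what underlies the paper's citation of Theorem~\ref{blift}), and invoke the ideal property. One small correction to the closing remark: $\omega(\D)\subset\D$ does \emph{not} force $\overline{\varphi(\C_0)}$ to be a compact subset of $\C_{1/2}$ (that would need $\overline{\omega(\D)}\subset\D$), but this is irrelevant to the proof, which needs only the isometry of $C_I$ and the assumed boundedness of $C_T$.
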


%The Bohr lift of $\varphi$ is nothing but
%\begin{equation}\label{nobut} \Phi=T\circ \omega,\quad \Phi(z)=\frac{1}{2}+\frac{1-\omega(z)}{1+\omega(z)}.\end{equation}
%Note that $\Phi:\D\to \C_{1/2}$ is uniformly bounded since $\omega$ does not approach $-1$.

\begin{proof}
By Theorem~\ref{blift}, the operator $C_I$, defined by setting $C_{I}g(s):=g(I(s))$, is an isometry from $H^{p}(\D)$ into $\Hp$.  
We use the ideal property of approximation numbers and their preservation under left multiplication by isometries to conclude that 
 $$a_{n}(C_\varphi)=a_{n}(C_\omega\circ C_T)\leq \Vert C_T\Vert a_{n}(C_\omega).$$
 Here $\|C_T\|$ is finite by assumption. 
\end{proof}

We would like to have a tight bound on $C_\varphi$ from below as well, but this is harder to achieve. 
We may adapt Theorem  \ref{belowc2bis} to get the following general bound. Here we use the notation $\mu_Z:=\sum_{j=1}^n (1-|z_j|^2)\,\delta_{z_j}$ for a sequence $Z=(z_j)$ in the unit disc.

 \begin{theorem}\label{belowc4} 
Let $\omega$ be an analytic self-map of $\ \D$ such that $\omega(\D)$ has  positive distance to $-1$. Assume that $1\le  p <\infty$ and that $C_T$ is bounded from $H^p(\D)$ into $\Hp$, and set $\varphi:=T \circ \omega \circ I$ and $\ \Phi=T\circ \omega$, the Bohr lift of $\varphi$. There exists a positive constant $c$ such that if $Z=(z_j)$ is any finite sequence with both $Z$ and $\omega(Z)$ consisting of $n$ distinct points in $\D$, then 
\[ a_{n}(\Cp)\geq  c\,n^{-(1/\min(2,p)-1/p)}\, [M_{{\Hp}}(\Phi(Z)) ]^{-1} \|\mu_{Z}\|_{{\mathcal C}, H^p(\D)}^{-1/p} \inf_{1\le j\le n} \left(\frac{1-|z_j|^2}{1-|\omega(z_j)|^2}\right)^{1/p}. \]
\end{theorem}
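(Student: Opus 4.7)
The plan is to adapt the argument of Theorem~\ref{belowc2bis} to the factorization $C_\varphi = C_I \circ C_\omega \circ C_T$. As a first step I would invoke \eqref{stefan} in the form $a_n(C_\varphi) \geq b_n(C_\varphi^{\ast})$. Since $C_I$ is an isometric embedding, its adjoint $C_I^{\ast}\colon(\Hp)^{\ast}\to H^p(\D)^{\ast}$ is a norm-one quotient. A Hahn--Banach argument then lifts any $n$-dimensional $E\subset H^p(\D)^{\ast}$ to $\tilde E\subset (\Hp)^{\ast}$ of the same dimension for which $C_I^{\ast}|_{\tilde E}\colon \tilde E\to E$ is (essentially) norm-preserving, so for $L\in \tilde E$ with $\mathcal L := C_I^{\ast}L$ one has $C_\varphi^{\ast}L = C_T^{\ast}C_\omega^{\ast}\mathcal L$, yielding
\[ a_n(C_\varphi)\;\geq\; \inf_{\mathcal L\in S_E}\big\|C_T^{\ast}C_\omega^{\ast}\mathcal L\big\|_{(\Hp)^{\ast}}. \]

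Next I would take $E:=\vspan\bigl(\delta_{z_j}^{H^p(\D)}:\,1\le j\le n\bigr)\subset H^p(\D)^{\ast}$, which has dimension $n$ because the $z_j$ are distinct. For $\mathcal L=\sum_j \lambda_j\delta_{z_j}^{H^p(\D)}$ of unit norm, the adjoint identities $C_\omega^{\ast}\delta_w^{H^p(\D)}=\delta_{\omega(w)}^{H^p(\D)}$ and $C_T^{\ast}\delta_v^{H^p(\D)}=\delta_{T(v)}^{\Hp}$ produce the crucial identity
\[ C_T^{\ast}C_\omega^{\ast}\mathcal L \;=\; \sum_{j=1}^n\lambda_j\,\delta_{\Phi(z_j)}^{\Hp}. \]
I would then repeat verbatim the two-sided argument of Theorem~\ref{belowc2bis}, with the source side now living in $H^p(\D)$ and the target side in $\Hp$: Cauchy--Schwarz (when $p\ge 2$) or H\"older (when $p<2$) applied to the pairing $\mathcal L(g)=\sum_j \lambda_j g(z_j)$ with weights $\Lambda_j:=\lambda_j\|\delta_{z_j}^{H^p(\D)}\|$, together with the $H^p(\D)$-Carleson inequality $\int|g|^p\,d\mu_Z\le \|\mu_Z\|_{\mathcal C,H^p(\D)}\|g\|_{H^p(\D)}^p$, yields
\[ 1\;=\;\|\mathcal L\|_{H^p(\D)^{\ast}}\;\leq\; n^{1/\min(2,p)-1/p}\,\|\Lambda\|_2\,\|\mu_Z\|_{\mathcal C,H^p(\D)}^{1/p}, \]
while the $\Hp$-interpolation Boas-type lower bound gives
\[ \Big\|\sum_{j=1}^n\lambda_j\delta_{\Phi(z_j)}^{\Hp}\Big\|_{(\Hp)^{\ast}}\;\geq\;\bigl[M_{\Hp}(\Phi(Z))\bigr]^{-1}\Big(\sum_{j=1}^n|\lambda_j|^q\,\|\delta_{\Phi(z_j)}^{\Hp}\|^q\Big)^{1/q}. \]

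It remains to convert $\|\delta_{\Phi(z_j)}^{\Hp}\|$ into $\|\delta_{\omega(z_j)}^{H^p(\D)}\|$ so as to produce the ratio appearing in the theorem. This is where the hypothesis that $\omega(\D)$ stays at positive distance from $-1$ enters: taking $T=T_0$, the identity $2\Real T_0(w)-1=2(1-|w|^2)/|1+w|^2$ combined with $\zeta(2\sigma)\asymp(2\sigma-1)^{-1}$ near $\sigma=1/2$ forces $\zeta(2\Real \Phi(z_j))\asymp(1-|\omega(z_j)|^2)^{-1}$ uniformly in $j$, hence $\|\delta_{\Phi(z_j)}^{\Hp}\|\asymp\|\delta_{\omega(z_j)}^{H^p(\D)}\|$ and therefore
\[ \frac{\|\delta_{\Phi(z_j)}^{\Hp}\|}{\|\delta_{z_j}^{H^p(\D)}\|}\;\asymp\;\left(\frac{1-|z_j|^2}{1-|\omega(z_j)|^2}\right)^{1/p}. \]
Feeding this equivalence, together with the routine $\ell^q$--$\ell^2$ comparison in $\mathbb C^n$ (which accounts for the factor $n^{1/\min(2,p)-1/p}$ when $p\ge 2$ and is vacuous when $p<2$), into the combination of the two inequalities above and taking the infimum over $\mathcal L\in S_E$ produces the claimed lower bound on $a_n(C_\varphi)$. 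I expect the main obstacle to be this last norm-equivalence step, since it is genuinely sensitive to the specific $T$; the positive-distance hypothesis on $\omega(\D)$ is tailored exactly to make it work cleanly for $T_0$, and the $T$-dependent implicit constants are absorbed into $c$.
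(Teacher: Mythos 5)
Your factorization $C_\varphi^{\ast}=C_T^{\ast}C_\omega^{\ast}C_I^{\ast}$, the computation $C_T^{\ast}C_\omega^{\ast}\delta_{z_j}^{H^p(\D)}=\delta_{\Phi(z_j)}^{\Hp}$, the Carleson/Boas two-sided estimate, and the conversion $\zeta(2\Real\Phi(z_j))\gg(1-|\omega(z_j)|^2)^{-1}$ via $T_0$ and the positive-distance hypothesis all match the paper's route through Theorem~\ref{belowc2bis}. But there is one genuine gap in the way you pass from $(\Hp)^{\ast}$ to $H^p(\D)^{\ast}$.

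You claim that a Hahn--Banach argument lifts an $n$-dimensional subspace $E\subset H^p(\D)^{\ast}$ to an $n$-dimensional $\tilde E\subset(\Hp)^{\ast}$ on which $C_I^{\ast}$ is essentially norm-preserving. Hahn--Banach does not do this: it provides norm-preserving extensions of \emph{individual} functionals through the quotient $C_I^{\ast}$, but these extensions need not be chosen coherently, so there is in general no linear lift of a whole subspace with control on the norm. In other words, a quotient map onto a finite-dimensional space need not restrict to an isomorphism of bounded inverse on any complement, and without such a $\tilde E$ the step $\inf_{L\in S_{\tilde E}}\|C_\varphi^{\ast}L\|=\inf_{\mathcal L\in S_E}\|C_T^{\ast}C_\omega^{\ast}\mathcal L\|$ is unjustified. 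What actually rescues the argument here is a structural fact specific to $C_I$: under the Bohr lift $\Hp\cong H^p(\T^\infty)$, the range $C_I\big(H^p(\D)\big)$ is exactly the set of Dirichlet series supported on powers of $2$, i.e.\ the functions depending only on the first coordinate $z_1$. The conditional expectation $Pf(z_1)=\int_{\T^\infty}f(z_1,w_2,w_3,\dots)\,dm_\infty(w)$ is a norm-one projection of $H^p(\T^\infty)$ onto this subspace, for all $1\le p<\infty$. This yields a norm-one left inverse $\pi:=C_I^{-1}P$ of $C_I$, hence $\pi^{\ast}:H^p(\D)^{\ast}\to(\Hp)^{\ast}$ is an isometry satisfying $C_I^{\ast}\pi^{\ast}=\mathrm{Id}$; taking $\tilde E:=\pi^{\ast}(E)$ gives exactly the norm-preserving lift you wanted. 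Equivalently, one can observe directly that $C_T^{\ast}C_\omega^{\ast}=C_\varphi^{\ast}\pi^{\ast}$, so $a_n(C_T^{\ast}C_\omega^{\ast})\le a_n(C_\varphi^{\ast})\le a_n(C_\varphi)$ by the ideal property, and then work entirely with $b_n(C_T^{\ast}C_\omega^{\ast})$. Once you invoke this complementation instead of Hahn--Banach, the rest of your argument goes through.

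Two minor remarks. First, you only need the one-sided bound $\zeta(2\Real\Phi(z_j))\gg(1-|\omega(z_j)|^2)^{-1}$, which is what the paper proves; the claimed two-sided $\asymp$ would require $\omega(z_j)$ to stay away from $\partial\D$, which you do not (and need not) assume. Second, for $p=2$ the complementation is automatic (orthogonal projection), which is why the issue is invisible in the Hilbert-space case the transference principle was originally developed for; for $p\ne 2$ the conditional-expectation argument is the substantive ingredient and deserves to be stated explicitly.
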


\begin{proof} Since
 $\Phi$ is bounded and $|1+\omega(z)|\gg1$, we have (e.g. in the case $T=T_0$)
\[ \zeta(2\Real s_j)=\zeta(2 \Real \Phi(z_j))\ge \frac{c}{2\Real \Phi(z_j)-1}= \frac{c}{2}\frac{ |1+\omega(z_j)|^2}{(1-|\omega(z_j)|^2)}\gg (1-|\omega(z_j)|^2)^{-1}. \]
Using this fact, and following the same reasoning as in the proof of \cite[Theorem~9.1]{QS1}, we obtain the result from Theorem~\ref{belowc2bis}.
\end{proof}
 
This result is completely analogous to the bound from below in \cite[Theorem~9.1]{QS1}, but at present only of interest when $p=1$ because of \eqref{remarkable} which says that $M_{\Ho}(S)\le [M_{\Ht}(S)]^2$ for any $\Ht$ interpolating sequence $S$. 

We now have what we need to present our leading example and thus prove Theorem~\ref{example}. 

\begin{proof}[Proof of Theorem~\ref{example}] When $\omega$ is a lens map, it is known from \cite[Proposition 6.3]{ROQULI} that the approximation numbers decay as $e^{-\sqrt{n}}$ to some positive power. By Theorem~\ref{tsfc}, the same upper bound holds for the decay of $a_n(C_\varphi)$ for the transferred operator\footnote{We mention without proof that, for lens maps, the choice $T=T_0$ would work as well.} $C_\varphi=C_{T_{\varepsilon}} \circ C_{\omega}\circ C_I $ on $\Hp$ for all $p\ge 1$. When $p=1$, we can use Theorem~\ref{belowc4} to arrive at the bound from below. Indeed, if we take $z_j=1-\rho^j$ with $0<\rho<1$ and if $\theta$ denotes the parameter of the lens map $\omega$, then simple estimates, using in particular \eqref{remarkable} and Lemma~\ref{secondlem2}, show that
\begin{eqnarray*} \|\mu_{Z}\|_{{\mathcal C}, H^1(\D)} & \ll & 1,\\
M_{\Ho}(S)& \ll &  [M_{\Ht}(S)]^2\ll [M_{H^{2}(\C_{1/2}}(S)]^\alpha\ll e^{b/(1-\rho)}, \\
\inf_{1\leq j\leq n}\left(\frac{1-|z_j|^2}{1-|\omega(z_j)|^2}\right)^{1/p}& \gg & \rho^{n(1-\theta)/p}.\end{eqnarray*}
We now optimize the choice of $\rho$ by taking $\rho=1-1/\sqrt n$, and we get the lower bound in Theorem ~\ref{example} with the help of Theorem \ref{belowc4}. \end{proof}
%The upper bound follows similarly from Theorem \ref{belowc4} and an estimate of \cite{ROQULI}.}

We note that for general $p\neq 1,2$, we are not able to get any better result from Theorem~\ref{belowc4} than the general lower bound in part (a) of Theorem~\ref{General}. 

We observe the following limitation of our method when $p$ is not an even integer, and thus in particular also in the case $p=1$. When the approximation numbers of $C_\omega$ decay more slowly than they do when $\omega$ is a lens map,  the approximation numbers of $C_{T_{\varepsilon}} \circ C_{\omega}\circ C_I $ will still decay as a power of $e^{-\sqrt{n}}$ because of the map $T_{\varepsilon}$. Substituting $T_{\varepsilon}$ by a map $T$ which enjoys some smoothness at $z=1$, we may remedy this situation to some extent. But it is clear that we are unable to obtain precise results when, for example,  $(1-|\omega|)^{-1}$ is non-integrable on $\T$.

We conclude that two rather fundamental open problems remain obstacles for extending the utility of our transference principle:
\begin{itemize}
\item Is the embedding inequality \eqref{lokal} valid for a continuous range of $p$, for instance all $1\le p<\infty$?
\item What are the bounded interpolating sequences for $\Hp$ when $1<p<\infty$, $p\neq 2$, and how can the constant of interpolation $M_{\Hp}(S)$ be estimated when the sequence $S$ approaches the vertical line $\Real s=1/2$?
\end{itemize} 
These questions await further investigation.

\section*{Acknowledgement}
The authors are grateful to Eero Saksman for kindly allowing them to include in this paper his unpublished vertical convolution formula \eqref{sakiden} and the alternate proof of Lemma~\ref{parsum}.

\end{document}